\title{Teichm\"{u}ller geometry of moduli space, I: \\
Distance minimizing rays and the Deligne-Mumford compactification}
\author{Benson Farb and Howard Masur \thanks{Both authors are
supported in part by the NSF.}}
\theoremstyle{plain}
\newtheorem{theorem}{Theorem}[section]
\newtheorem{proposition}[theorem]{Proposition}
\newtheorem{lemma}[theorem]{Lemma}
\newtheorem{corollary}[theorem]{Corollary}
\newtheorem{xample}[theorem]{Example}
\newtheorem{definition}[theorem]{Definition}
\def\proof{{\bf {\medskip}{\noindent}Proof. }}
\def\endproof{$\diamond$ \bigskip}
\def\title{\em}
\def\bar{\overline}
\newcommand\R{\mbox{\bf R}}
\newcommand\C{\mbox{\bf C}}
\newcommand\hyp{\mbox{\bf H}}
\newcommand\Z{\mbox{\bf Z}}
\renewcommand\mod{{\rm mod}}
\DeclareMathOperator\Ext{Ext}
\DeclareMathOperator\Area{Area}
\DeclareMathOperator\teich{Teich}
\DeclareMathOperator\Teich{\teich}
\DeclareMathOperator\Mod{Mod}
\DeclareMathOperator\M{{\mathcal M}}
\DeclareMathOperator\B{{\mathcal B}}
\DeclareMathOperator\W{{\mathcal W}}
\DeclareMathOperator\CC{{\mathcal C}}
\DeclareMathOperator\V{{\mathcal V}}
\DeclareMathOperator\MF{{\mathcal MF}}
\DeclareMathOperator\SL{SL}
\DeclareMathOperator\QD{QD}
\DeclareMathOperator\diam{diam}
\DeclareMathOperator\Nbhd{Nbhd}
\DeclareMathOperator\DM{\bar{\M(S)}^{\rm DM}}
\begin{document}
\maketitle


\section{Introduction}

Let $S$ be a {\em surface of finite type}; that is, a closed, oriented surface with a finite (possibly
empty) set of points removed.  In this paper we classify
(globally) geodesic rays in the moduli space $\M(S)$ of Riemann
surfaces, endowed with the Teichm\"{u}ller metric, and we 
determine precisely how pairs of rays asymptote.  We then use
these results to relate two
important but disparate topics in the study of $\M(S)$: 
Teichm\"{u}ller geometry and the
Deligne-Mumford compactification.  We reconstruct the
Deligne-Mumford compactification (as a metric stratified space)
purely from the intrinsic metric geometry of $\M(S)$ endowed with
the Teichm\"{u}ller metric.  We do this by 
constructing an ``iterated EDM ray
space'' functor, which is defined on a quite general class of
metric spaces.  We then prove that this functor applied to
$\M(S)$ produces the Deligne-Mumford compactification.

\medskip
\noindent
{\bf Rays in $\M(S)$.  }A {\em ray} in a metric space $X$ is a map $r:[0,\infty)\to X$ which is
locally an isometric embedding.  In this paper we initiate the study of (globally) isometrically embedded 
rays in $\M(S)$.   Among other things, we classify such rays, determine their asymptotics, classify almost geodesic rays, and work out the Tits angles between rays.  We take as a model for our study the case of rays in locally
symmetric spaces, as in the work of Borel, Ji, MacPherson and others;
see \cite{JM} for a summary.   

In \cite{JM} it is explained how the continuous spectrum of any 
noncompact, complete Riemannian manifold $M$ depends only on the geometry of its
ends, and in some cases (e.g. when $M$ is locally symmetric) 
the generalized eigenspaces can be parametrized by 
a compactification constructed from asymptote classes of certain rays.  
The spectral theory of $\M(S)$ endowed with the Teichm\"{u}ller metric was
initiated by McMullen \cite{Mc}, who proved positivity of the lowest
eigenvalue of the Laplacian.   Our compactification of $\M(S)$ by equivalence classes of certain rays 
might be viewed as a step towards further understanding its spectral theory.   We remark that the 
Teichm\"{u}ller metric is a Finsler metric.

Following \cite{JM}, 
we will consider two natural classes of rays.

\begin{definition}[{\bf EDM rays}]
A ray $r:[0,\infty)\to X$ in a metric space $X$ is {\em eventually
distance minimizing}, or {\em EDM}, 
if there exists $t_0$ such that for all $t\geq t_0$: 
$$d(r(t),r(t_0))=|t-t_0|$$
\end{definition}

Note that, if $r$ is an EDM 
ray, after cutting off an initial segment of $r$ we obtain a globally geodesic ray, i.e. an isometric embedding of $[0,\infty)\to X$ .

\begin{definition}[{\bf ADM rays}]
The ray $r(t)$ is {\em almost distance minimizing}, or {\em ADM}, if 
 there are
constants $C,t_0 \geq 0$ such that for $t\geq t_0$: 
$$d(r(t),r(t_0))\geq
|t-t_0|-C$$ 
\end{definition}

It is easy to check that a ray $r$ is ADM if and only if, for every 
$\epsilon>0$ there exists $t_0\geq 0$ so that for all $t\geq t_0$: 
$$d(r(t),r(t_0))\geq
|t-t_0|-\epsilon$$ 

As with locally symmetric manifolds, there are several ways 
in which a ray in $\M(S)$ might not be ADM: it can traverse a closed geodesic, 
it can be contained in a fixed compact set, or it can return to a fixed
compact set at arbitrarily large times.  More subtly, there are 
rays which leave every compact set in $\M(S)$ and are ADM but are not 
EDM;  these rays ``spiral'' around in 
the ``compact directions'' in the cusp of $\M(S)$. This phenomenon does not appear in 
the classical case of $\M(T^2)=\hyp^2/\SL(2,\Z)$, but it does appear in all moduli spaces of higher complexity, as we shall show.

The set of 
rays in $\Teich(S)$ through a basepoint $Y\in\Teich(S)$ is in bijective correspondence with 
the set of elements $q\in \QD^1(Y)$,
the space of unit area 
holomorphic quadratic differentials $q$ on $Y$ (see \S\ref{section:teichgeom} below).   
We now describe certain kinds of 
Teichm\"{u}ller rays that will be
important in our study.

Recall that a quadratic 
differential $q$ on $Y$ is {\em Strebel} if all of its vertical
trajectories are closed.  In this case $Y$ 
decomposes into a union of flat cylinders. Each cylinder is swept out by 
vertical trajectories of the same length.  
The {\em height} of the cylinder is the distance across the cylinder.

We say $q$ is  {\em mixed Strebel} if it contains at least one 
cylinder of closed trajectories.

\begin{definition}[{\bf (Mixed) Strebel rays}]
A ray in $\M(S)$ is a {\em (mixed) Strebel ray} if it is the projection
to $\M(S)$ of a ray in $\Teich(S)$ corresponding to a pair $(Y,q)$ with $q$
a (mixed) Strebel
differential on $Y$.
\end{definition}

Our first main result is a classification of EDM rays and ADM rays
in moduli space $\M(S)$.

\begin{theorem}[{\bf Classification of {\rm EDM} rays in \boldmath$\M(S)$}]
\label{theorem:rays}
Let $r$ be a ray in $\M(S)$.  Then 
\begin{enumerate}
\item $r$ is {\rm EDM} if and only if it is Strebel.
\item $r$ is {\rm ADM} if and only if it is mixed Strebel.
\end{enumerate}
\end{theorem}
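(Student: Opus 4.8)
The plan is to understand a Teichm\"uller ray $r(t)$ associated to $(Y,q)$ by tracking, as $t\to\infty$, the geometry of the flat surface $(Y_t,q_t)$ and its image in $\M(S)$; the key observation is that whether $r$ eventually leaves every compact set, and \emph{how} it does so, is controlled by the trajectory structure of $q$. I would organize the argument around the Jenkins--Strebel decomposition: if $q$ has a spine (a nontrivial subsurface on which no vertical trajectory is closed), then along $r(t)$ that subsurface carries a long thin part whose geometry is \emph{not} pinned to a cusp in a distance-minimizing way, while the cylindrical part of $q$ degenerates in a completely understood manner (the core curves of the cylinders get pinched, the moduli of the cylinders grow linearly in $t$ in the Teichm\"uller metric). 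So the heart of the matter is to compare distances in $\M(S)$ along $r$ with distances measured using extremal length / Kerckhoff's formula for the Teichm\"uller metric.

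The main steps, in order: (1) Recall Kerckhoff's formula $d_{\Teich}(X,Z)=\tfrac12\log\sup_\gamma \Ext_Z(\gamma)/\Ext_X(\gamma)$ and set up the estimate that along a Teichm\"uller geodesic the extremal lengths of the vertical foliation contract by $e^{-2t}$ and those of the horizontal by $e^{2t}$. (2) Prove the ``if'' direction for EDM: when $q$ is Strebel, the surface $(Y_t,q_t)$ is a union of cylinders with core curves $\gamma_1,\dots,\gamma_k$, heights growing like $e^{t}$ and circumferences like $e^{-t}$; show directly that the image in $\M(S)$ is globally distance-minimizing for large $t$ — the point is that after entering the cusp the ray projects injectively (modulo the finite group fixing the cusp) and one can use the product-region theorem / Minsky's asymptotic formula for the Teichm\"uller metric on thin parts to see that $d_{\M}(r(t),r(t_0))=|t-t_0|$ eventually, because competing paths would have to re-enlarge some $\gamma_i$. (3) Prove the ``only if'' for EDM: if $q$ is not Strebel, it has a minimal component; show that in the limit the vertical foliation restricted to that component has irrational (or at least non-periodic) leaves, so the geodesic is not asymptotic to any cusp along a straight ray — more precisely, one produces shorter competing paths in $\M(S)$ by exploiting the extra directions in the stratum (this is where the ``spiraling'' rays live), using that the mapping class group acts on the limiting component and gives nearby sheets of $\M(S)$ that can be short-cut through. (4) For the ADM statements, repeat (2) and (3) but with the weaker ``$-C$'' slack: mixed Strebel suffices because the single cylinder already forces the ray into the cusp deep enough that the distance defect stays bounded, whereas if $q$ has \emph{no} cylinder (all components minimal) the ray either stays in a compact set or returns to one infinitely often — here one invokes that a quadratic differential with no vertical cylinder has recurrent vertical foliation for a.e.\ and, with more work, the relevant combinatorial recurrence to show $d_{\M}(r(t),r(t_0)) - |t-t_0| \to -\infty$.

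The hard part, I expect, is step (3): ruling out that a non-Strebel ray is EDM. The difficulty is that a non-Strebel ray still leaves every compact set (if it has a nonzero cylindrical part it pinches those curves, and even a purely minimal $q$ can escape to infinity), so one cannot simply argue by compactness; instead one must genuinely exhibit shortcuts, which requires a fine understanding of the Teichm\"uller metric on the part of $\M(S)$ near the image of the ray. I anticipate this uses Rafi's / Minsky's combinatorial description of Teichm\"uller geodesics together with the structure of the Deligne--Mumford boundary: a non-Strebel ray, projected to $\M(S)$, accumulates on a boundary stratum but its ``transverse'' motion within that stratum is nontrivial and can be undone more cheaply. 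Making ``more cheaply'' quantitative — beating the trivial bound $|t-t_0|$ by a definite amount for ADM, and by any amount for EDM — is the crux, and I would expect the proof to isolate this as a separate lemma about Teichm\"uller distance between two surfaces that differ by a large twist in a short curve, versus the distance realized by a geodesic that does not pinch that curve.
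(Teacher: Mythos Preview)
Your outline for the ``if'' directions (Strebel $\Rightarrow$ EDM, mixed Strebel $\Rightarrow$ ADM) is broadly right in spirit, though the paper does it more elementarily via direct extremal-length estimates and Kerckhoff's formula rather than the product-region theorem. The serious problems are in the converse directions.

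\textbf{Step (3) misses the key idea entirely.} Your plan is to find shortcuts in $\M(S)$ by exploiting ``nearby sheets'' coming from the mapping class group action, and you anticipate a lemma about twist distances. This is not how the paper proceeds, and I do not see how to make your approach yield a \emph{strict} inequality $d_{\M}(r(0),r(t)) < t$; asymptotic combinatorics of the Minsky--Rafi type typically give estimates only up to additive constants, which is not enough for EDM. The paper's argument is completely different and much more direct: assuming $q$ has a minimal component, one constructs \emph{by hand} a quasiconformal map $\psi:r(0)\to r(t)$ with global dilatation exactly $e^{2t}$ but whose \emph{pointwise} dilatation is strictly smaller than $e^{2t}$ on the minimal component. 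Since the pointwise dilatation is non-constant, $\psi$ is not the Teichm\"uller map, so by Teichm\"uller uniqueness the actual extremal map has dilatation $< e^{2t}$, and the ray is not distance-minimizing. The construction of $\psi$ uses Delaunay triangulations (with vertices at the zeros of $q(t)$) and a lemma bounding the shortest saddle connection in the minimal component from below by $c(t)e^{-t}$ with $c(t)\to\infty$; one then maps triangle-to-triangle affinely and checks the dilatation. You have none of this machinery in your sketch.

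\textbf{Step (4) contains a factual error.} You assert that if $q$ has no vertical cylinder then the ray ``either stays in a compact set or returns to one infinitely often,'' invoking a.e.\ recurrence of vertical foliations. This is false for a fixed $q$: there exist minimal (indeed, non-uniquely-ergodic) vertical foliations whose Teichm\"uller geodesic leaves every compact set in $\M(S)$ and never returns. The paper's proof runs the other way: assuming $r$ is ADM, it first deduces that $r(t)$ eventually leaves every compact set, so for large $t$ there is a short curve $\beta_j(t)$; then, using that $q$ has \emph{no} vertical cylinder, Minsky's estimates give $\Ext_{r(t)}(\beta_j(t)) \gtrsim 1/t$. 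Combining this with Minsky's product-region theorem yields $d_{\Teich}(X_0,\phi(r(t))) \leq \tfrac12\log t + O(1)$ for a suitable mapping class $\phi$, contradicting ADM. The logical structure here is essentially the opposite of what you wrote.
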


One of the tensions arising from Theorem \ref{theorem:rays} is
that for any $\epsilon>0$, there exist very long local geodesics
$\gamma$ between points $x,y$ in $\M(S)$ which are only
$\epsilon$ longer than any (global) geodesic from $x$ to $y$.  As
distance in $\M(S)$ is difficult to compute precisely, the
question arises as to how such ``fake global geodesics'' $\gamma$
can be distinguished from true global geodesics.  This is done in
\S\ref{section:edm:implies:strebel}.  The idea is to use the
input data of being non-Strebel to build by hand a map whose
$\log$-dilatation equals the length of $\gamma$, but which has
nonconstant pointwise quasiconformal dilatation.  By
Teichm\"{u}ller's uniqueness theorem, since the actual
Teichm\"{u}ller map from $x$ to $y$ has constant pointwise
dilatation, this dilatation, and thus the length of the
Teichm\"{u}ller geodesic connecting $x$ to $y$, is strictly
smaller than the length of $\gamma$.

We also determine finer information about EDM rays.  In Section
\ref{section:asymptote} we determine the limiting asymptotic
distance between EDM rays: it equals the Teichm\"{u}ller distance
of their endpoints in the ``boundary moduli space'' (see Theorem
\ref{thm:distance} below).  This precise behavior of rays in
$\M(S)$ lies in contrast to the behavior of rays in the
Teichm\"{u}ller space of $S$, which themselves may not even have
limits.  Theorem \ref{thm:distance} is crucial for our
reconstruction of the Deligne-Mumford compactification.  In
Section \ref{section:tits} we compute the Tits angle of any two
rays, showing that only $3$ possible values can occur.  This result 
contrasts with the behavior in locally symmetric manifolds, where 
a continuous spectrum of Tits angles can occur.

\bigskip \noindent 
{\bf Reconstructing the topology of
Deligne-Mumford.  }Deligne-Mumford \cite{DM} constructed a
compactification $\DM$ of $\M(S)$ whose points are represented by
conformal structures on noded Riemann surfaces. They proved that
$\DM$ is a projective variety.  As such, $\DM$ as a topological
space comes with a natural stratification: each stratum is a
product of moduli spaces of surfaces of lower complexity.  We
will equip each moduli space with the Teichm\"{u}ller metric, and
the product of moduli spaces with the $\sup$ metric.  In this way
$\DM$ has the structure of a {\em metric stratified space},
i.e. a stratified space with a metric on each stratum (see
\S\ref{section:irdm} below).  We note that $\DM$ was also
constructed topologically by Bers in \cite{Be}.

In Section \ref{section:irdm} we construct, for any geodesic
metric space $X$, a space $\bar{X}^{ir}$ of $X$,
called the {\em iterated EDM ray space} associated to $X$.  This 
space comes from considering asymptote classes of EDM
rays, endowing the set of these with a natural metric, and then
considering asymptote classes of EDM rays on this space, etc.
The space $\bar{X}^{ir}$ has the structure of a metric stratified
space.

\begin{theorem} Let $S$ be a surface of finite type.  Then there
is a strata-preserving homeomorphism $\bar{\M(S)}^{ir}\to \DM$
which is an isometry on each stratum.  \end{theorem}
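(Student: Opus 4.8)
The plan is to argue by induction on $\dim_{\mathbb C}\M(S)$; when this is $0$ the space $\M(S)$ is a point and $\bar{\M(S)}^{ir}=\M(S)=\DM$, so assume the theorem for every surface whose moduli space has strictly smaller dimension. The key first step is to identify the first level $\bar{\M(S)}^{edm}$ of the iterated ray space, i.e.\ the metric space of asymptote classes of EDM rays in $\M(S)$. By part~(1) of Theorem~\ref{theorem:rays} these are exactly the Strebel rays. A Strebel ray $r$ comes from a pair $(Y,q)$ with $q$ a Strebel differential; as $t\to\infty$ it pinches exactly the multicurve $\sigma=\sigma(q)$ formed by the core curves of the cylinders of $q$, and, by the analysis of Section~\ref{section:asymptote}, converges in $\DM$ to the noded surface $\ell(r)\in\M(S\setminus\sigma)$ obtained by collapsing those cylinders. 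Theorem~\ref{thm:distance} then says that two EDM rays have finite asymptotic distance if and only if their core multicurves agree up to the mapping class group, in which case that distance equals the Teichm\"uller distance between $\ell(r_1)$ and $\ell(r_2)$ in $\M(S\setminus\sigma)$. Hence $r\mapsto\ell(r)$ descends to a bijection from asymptote classes of EDM rays onto $\coprod_{[\sigma]}\M(S\setminus\sigma)$ — the coproduct over mapping-class-group orbits of the nonempty multicurves arising as core systems of Strebel differentials — and this bijection is an isometry onto that coproduct, where each boundary moduli space $\M(S\setminus\sigma)=\prod_j\M(S_j)$ carries the Teichm\"uller ($\sup$-)metric. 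Each $\M(S_j)$ has strictly smaller dimension, since $\sum_j\dim_{\mathbb C}\M(S_j)=\dim_{\mathbb C}\M(S)-|\sigma|$.

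Now iterate. Not every multicurve appears at the first level — for instance a Strebel differential on a closed genus-$g$ surface has at most $g$ cylinders, far fewer than the $3g-3$ curves of a pants decomposition once $g\geq 2$ — and this is precisely why the construction must be iterated. On the other hand every essential simple closed curve is the core of a Strebel differential with a single finite cylinder, so any multicurve can be built up one curve at a time through Strebel-realizable steps, and thus every stratum $\M(S\setminus\sigma)$ of $\DM$ is reached at some level of the iteration. One now checks that the iterated EDM ray space functor, defined for all geodesic metric spaces, is compatible with the finite $\sup$-metric products $\prod_j\M(S_j)$ occurring as strata — a statement about EDM rays in such products, proved via Theorem~\ref{theorem:rays}, the care being that an EDM ray there runs a geodesic ray in a nonempty set of factors while being only $1$-Lipschitz in the rest, so one must argue the asymptote classes are accounted for by the product structure. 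Combining this with the inductive hypothesis, under which $\bar{\M(S_j)}^{ir}$ is strata-preservingly homeomorphic to $\bar{\M(S_j)}^{\rm DM}$ and isometric on strata, and with $\bar{\prod_j\M(S_j)}^{\rm DM}=\prod_j\bar{\M(S_j)}^{\rm DM}$, and gluing the iterated ray spaces of the various strata along the strata they share — each stratum on either side being labelled by the isotopy class of the multicurve being pinched, and the labels matching — one obtains a strata-preserving bijection $\Phi\colon\bar{\M(S)}^{ir}\to\DM$. By construction $\Phi$ is the identity on $\M(S)$ and carries the asymptote class of an EDM ray, at any level, to its Deligne-Mumford limit; by Theorem~\ref{thm:distance} and the inductive hypothesis, $\Phi$ is an isometry on each stratum.

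It remains to show $\Phi$ is a homeomorphism, i.e.\ that the metric-stratified-space topology of $\bar{\M(S)}^{ir}$ corresponds under $\Phi$ to the topology of $\DM$. On each open stratum this is the isometry just established, so the content is entirely transverse to the strata: one must show that a sequence $X_n$ in a stratum $\M(S\setminus\sigma)$ converges in $\DM$ to a point $p$ of a deeper stratum $\M(S\setminus\sigma')$, $\sigma\subsetneq\sigma'$, exactly when the corresponding points converge in $\bar{\M(S)}^{ir}$ in the sense dictated by the iterated construction (which is phrased through EDM rays limiting onto a point). The geometric heart of this — and the step I expect to be the main obstacle — is the continuity of the Strebel and ``collapse'' constructions under Deligne-Mumford degeneration: if $X_n\to p$ then the curves of $\sigma'\setminus\sigma$ become short in $X_n$, the Strebel differentials on $X_n$ with finite cylinders along $\sigma'\setminus\sigma$ (in the appropriate subsurface) define Strebel rays whose asymptote classes — their collapsed noded surfaces — converge to $p$, and conversely convergent asymptote classes give convergent collapsed surfaces. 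The delicate point is controlling the limiting conformal structures on the complementary pieces, which for a Strebel differential have measure zero in $Y$, so that the limit must be read off from the long thin cylinders together with the expanding critical graph. A secondary, bookkeeping obstacle is to confirm that the iteration places each stratum of $\DM$ into $\bar{\M(S)}^{ir}$ exactly once and with the correct closure relations; this uses the cofinality of the Strebel-realizable multicurves noted above, which makes both the poset of strata and the transverse topology match on the two sides.
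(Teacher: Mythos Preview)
Your proposal has a fundamental gap at the very first step. You identify the first level of the iterated ray space with asymptote classes of \emph{all} EDM rays (all Strebel rays), but the functor $X\mapsto\bar X^{ir}$ is built from \emph{isolated} EDM rays only (see \S\ref{section:irc}), and in $\M(S)$ these are precisely the \emph{one-cylinder} Strebel rays (Proposition~\ref{proposition:dm1}). This distinction is essential. With all Strebel rays, the map $[r]\mapsto r(\infty)$ to $\coprod_{[\sigma]}\M(S\setminus\sigma)$ is not a bijection: Theorem~\ref{thm:distance} is stated only under the hypothesis of \emph{modular equivalence}, and by Corollary~\ref{cor:asym} asymptotic rays must be modularly equivalent, so when $|\sigma|\ge 2$ there is an entire $(|\sigma|-1)$-simplex of asymptote classes (parametrized by the projective vector of cylinder moduli) sitting over each point of $\M(S\setminus\sigma)$. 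Your claimed bijection therefore fails, and with it the identification of $D_1$ with a disjoint union of boundary moduli spaces.

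A related error: your assertion that ``a Strebel differential on a closed genus-$g$ surface has at most $g$ cylinders'' is false---Jenkins--Strebel differentials exist for any admissible curve system with prescribed heights (Strebel, Theorem~21.7), so one may have up to $3g-3$ cylinders, and every multicurve would already appear at the first step if one used all Strebel rays. The paper instead pinches one curve at a time via one-cylinder Strebel rays; the iteration, together with the identification step of \S\ref{section:irc}, is what arranges for each stratum to appear exactly once with the correct closure relations. Your inductive framework on $\dim_{\mathbb C}\M(S)$ is reasonable in spirit, but it cannot get started without the correct computation of $D_1(\M(S))$.
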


Thus, as a metric stratified space, $\DM$ is determined by the
intrinsic geometry of $\M(S)$ endowed with the Teichm\"{u}ller
metric.  The following table summarizes a kind of dictionary
between purely (Teichm\"{u}ller) metric properties of $\M(S)$ on
the one hand, and purely combinatorial/analytic properties on the
other.  Each of the entries in the table is proved in this paper.

\bigskip
\begin{center}
\begin{tabular}{|l|l|}
\hline
PURELY METRIC & ANALYTIC/COMBINATORIAL\\
\hline
EDM ray in $\M(S)$& Strebel differential \\
\hline
ADM ray in $\M(S)$& mixed Strebel differential\\
\hline
isolated EDM ray in $\M(S)$& one-cylinder Strebel differential\\
\hline
asymptotic EDM rays in $\M(S)$& modularly equivalent Strebel differentials \\
&with same endpoint\\
\hline
iterated EDM ray space of $\M(S)$& Deligne-Mumford compactification $\DM$\\
\hline
rays of rays of $\cdots$ of rays ($k$ times)& level $k$ stratum of $\DM$\\
\hline
Tits angle 0 & pairs of combinatorially equivalent \\
&Strebel differentials\\
\hline
Tits angle 1 & pairs of Strebel differentials with \\
&disjoint cylinders\\
\hline
Tits angle 2 & all other pairs of Strebel differentials\\
\hline
\end{tabular}
\end{center}

\bigskip
\noindent
{\bf Acknowledgements. }We would like to thank Steve Kerckhoff, Cliff Earle, and Al Marden and Yair Minsky for useful discussions, and Chris Judge for numerous useful comments and corrections.  We are also grateful to Kasra Rafi for his crucial help relating to the appendix.

\section{Teichm\"{u}ller geometry and extremal length}
\label{section:teichgeom}

In this section we quickly explain some basics of the Teichm\"{u}ller metric and quadratic differentials.  We also make some extremal length estimates which will be used later.  
The notation fixed here will be used throughout the paper.

Throughout this paper $S$ will denote a surface of {\em finite type}, by which we mean a closed, oriented surface with a (possibly empty)
finite set of points deleted.  We call such deleted points {\em
punctures}.  The {\em Teichm\"{u}ller space} $\Teich(S)$ is the
space of equivalence classes of marked conformal structures
$(f,X)$ on $S$, where two markings $f_i:S\to X_i$ are equivalent
if there is a conformal map $h:X_1\to X_2$ with $f_2$ homotopic to $h\circ f_1$.  We often 
drop the marking notation, remembering that a marked surface is the same as a surface where we ``know the names of the curves''.

The {\em Teichm\"{u}ller metric} on $\Teich(S)$ is the metric
defined by $$d_{\Teich(S)}((X,g), (Y,h)):=\frac{1}{2}\inf\{\log
K(f): f:X\to Y \mbox{\ is homotopic to\ }h\circ g^{-1}\}$$ where
$f$ is quasiconformal and $$ K(f):={\rm ess}-\sup_{x\in S}K_x(f)\geq 1 $$
\noindent is the {\em quasiconfromal dilatation} of $f$, where
$$K_x(f):=
\frac{|f_z(x)|+|f_{\bar{z}}(x)|}{|f_z(x)|-|f_{\bar{z}}(x)|} $$
\noindent is the {\em pointwise quasiconformal dilatation} at
$x$.  We also use the notation $d_{\Teich(S)}(X,Y)$ with the
markings implied.  The {\em mapping class group} $\Mod(S)$ is the
group of homotopy classes of orientation-preserving
homeomorphisms of $S$.  This group acts properly discontinuously
and isometrically on $(\Teich(S),d_{\Teich(S)})$, and so the
quotient $$\M(S)=\Teich(S)/\Mod(S)$$ has the induced metric.  $\M(S)$ is the moduli space of
(unmarked) Riemann surfaces, or what is the same thing, conformal
structures on $S$.

\subsection{Quadratic differentials and Teichm\"{u}ller rays}

\noindent
{\bf Quadratic differentials and measured foliations. }  Let $S$ be a surface of finite type, and 
let $X\in\Teich(S)$.  Recall that a {\em (holomorphic) quadratic differential} $q$ on $X$ is a tensor 
given in holomorphic local coordinates $z$ by $q(z)dz^2$, where  $q(z)$ is holomorphic.  Let $\QD(X)$ denote the space of holomorphic quadratic differentials on $X$.   Any $q\in\QD(X)$ determines a singular Euclidean metric $|q(z)||dz|^2$, with the finitely many singular points corresponding to the zeroes of $q$.  The total area of $X$ in this metric is finite, and is denoted by $\|q\|$, which is a norm on $\QD(X)$.  We denote by $\QD^1(X)$ the set of elements $q\in\QD(X)$ with
$\|q\|=1$.  

An element $q\in\QD(X)$ determines a pair of transverse measured foliations 
${\mathcal F}_h(q)$ and ${\mathcal F}_v(q)$, called the {\em horizontal and vertical foliations} for $q$.  The leaves of these foliations are paths $z=\gamma(t)$ such that $$q(\gamma(t))\gamma'(t)^2>0$$ and $$q(\gamma(t))\gamma'(t)^2<0,$$ 
In a neighborhood of a nonsingular point, there are {\em natural} coordinates $z=x+iy$ so that 
the leaves of ${\mathcal F}_h$ are 
given by $y={\rm const.}$, the leaves of ${\mathcal F}_v$ are given by $x={\rm const.}$, and the transverse measures are  $|dy|$ and $|dx|$.  The foliations ${\mathcal F}_h$ and ${\mathcal F}_v$ have the zero set of $q$ as their common singular set, and at each zero of order $k$ they have a $(k+2)$-pronged singularity, locally modelled on the singularity at the origin of $z^kdz^2$.  
The leaves passing through a singularity are the {\em singular leaves} of the measured foliation.  A {\em saddle connection} is a leaf joining two (not necessarily distinct) singular points.  
The union of the saddle connections of the vertical foliation is called the {\em critical graph} $\Gamma(q)$ of $q$.   

The components $X\setminus \Gamma(q)$ are of two types: cylinders swept out by vertical trajectories (i.e. leaves of ${\mathcal F}_v$) of equal length, and {\em minimal components} where each leaf of ${\mathcal F}_v$ is dense.

\medskip
\noindent  
{\bf Teichm\"{u}ller maps and rays.  } 
{\em Teichm\"{u}ller's Theorem}  states that, given any $X,Y\in\Teich(S)$, there exists a unique 
(up to translation in the case when $S$ is a torus) 
quasiconformal map $f$, called the {\em Teichm\"{u}ller map}, realizing $d_{\Teich(S)}(X,Y)$.   
The Beltrami coefficient $\mu:=\frac{\bar{\partial }f}{\partial f}$ is of the form $\mu=k\frac{\bar{q}}{|q|}$ for 
some $q\in\QD^1(X)$ and some $k$ with $0\leq k<1$.  In natural 
local coordinates given by $q$ and a quadratic differential $q'$ on $Y$, we have 
$f(x+iy)=Kx+\frac{1}{K}iy$, where $K=K(f)=\frac{1+k}{1-k}$.  Thus $f$ dilates the horizontal foliation by $K$ and the vertical foliation by $1/K$.   

Any $q\in\QD^1(X)$ determines a geodesic ray $r=r_{(X,q)}$ in $\Teich(S)$, called the 
{\em Teichm\"{u}ller ray} based at $X$ in the direction of $q$.   The ray $r$ is given by the complex structures determined by the quadratic differentials $q(t)$ obtained by 
multiplying the transverse measures of ${\mathcal F}_h(q)$ and ${\mathcal F}_v(q)$ by $\frac{1}{K}=e^{-t}$ and $K=e^t$, respectively, for $t>0$.  To summarize,  for each $X\in\Teich(S)$, 
there is a bijective correspondence between the set of rays in $\Teich(S)$
based at $X$ and the set of elements of $\QD^1(X)$.  

Finally, we note that any ray in ${\mathcal M}(S)$ is 
the image of a ray in $\Teich(S)$ under the natural quotient map 
$$\Teich(S)\to {\mathcal M}(S)=\Teich(S)/\Mod(S).$$

\subsection{Extremal length and Kerckhoff's formula}

Kerckhoff \cite{Ke} discovered
an elegant and useful way to compute Teichm\"{u}ller distance in terms
of extremal length, which is a conformal invariant of isotopy classes of
simple closed curves.  We now describe this, following \cite{Ke}.

Recall that a {\em conformal metric} on a Riemann surface $X$ is a
metric which is locally of the form $\rho(z)|dz|$, where $\rho$ is a
non-negative, measurable, real-valued function on $X$.  A conformal metric
determines a length function $\ell_\rho$, which assigns to each (isotopy
class of) simple closed curve $\gamma$ the infimum $\ell_\rho(\gamma)$
of the lengths of all curves in the isotopy class, where length is
measured with respect to the conformal metric.   We denote the area of
$X$ in a conformal metric given by a function $\rho$ by $\Area_\rho(X)$,
or $\Area_\rho$ when $X$ is understood.

By {\em cylinder} \ we will mean the surface $S^1\times
[0,1]$, endowed with a conformal metric.  Recall that any cylinder $C$ is
conformally equivalent to a unique annulus 
of the form $\{z\in\C: 1\leq |z|\leq r\}$.  The number $(\log r)/2\pi$
will be called the {\em modulus} of $C$, denoted $\mod(C)$.  
A {\em cylinder in $X$} is an
embedded cyclinder $C$ in $X$, endowed with the conformal 
metric induced from the conformal metric on
$X$. 
There are two
equivalent definitions of extremal length, each of which is useful.

\begin{definition}[Extremal length]
Let $X$ be a fixed Riemann surface, and let $\gamma$ be an isotopy
class of simple closed curves on $X$.  The {\em extremal length} of
$\gamma$ in $X$, denoted by $\Ext_X(\gamma)$, or $\Ext(\gamma)$ when $X$
is understood, is defined to be one of the following 
two equivalent quantities:

\begin{description}
\item[Analytic definition: ]
$$\Ext(\gamma):=\sup_\rho \ell_\rho(\gamma)^2/\Area_\rho$$
where the supremum is over all conformal metrics $\rho$ on $X$ 
of finite positive area.

\item[Geometric definition: ]$$\Ext(\gamma):=\inf \{\frac{\displaystyle
1}{\displaystyle\mod(C)}: \mbox{$C$
is a cylinder with core curve isotopic to $\gamma$}\}$$
\end{description}

\end{definition}

As pointed out by Kerckhoff in \cite{Ke}, and as we will see throughout
the present paper, the analytic definition 
is useful for finding lower bounds for $\Ext(\gamma)$, while the
geometric definition is useful for finding upper bounds.  

\begin{theorem}[Kerckhoff \cite{Ke}, Theorem 4]
\label{theorem:kerckhoff:formula}
Let $S$ be any surface of finite type, and let 
$X,Y$ be any two points of $\Teich(S)$.  Then 
\begin{equation}
\label{eq:kerckhoff:formula}
d_{\Teich(S)}(X,Y)=\frac{1}{2}\log \ [\ \sup_\gamma 
\frac{\displaystyle \Ext_X(\gamma)}{\displaystyle \Ext_Y(\gamma)}\ ]
\end{equation}
where the supremum is taken over all isotopy classes of simple closed
curves $\gamma$ on $S$.
\end{theorem}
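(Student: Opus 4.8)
The plan is to prove the two inequalities separately. For the lower bound $d_{\Teich(S)}(X,Y)\geq \frac{1}{2}\log\sup_\gamma \Ext_X(\gamma)/\Ext_Y(\gamma)$, I would use the basic conformal-geometric fact that a $K$-quasiconformal map distorts extremal length of any simple closed curve by at most the factor $K$. Concretely, if $f:X\to Y$ is $K$-quasiconformal and $\gamma$ is any isotopy class of simple closed curves, then
\begin{equation}
\label{eq:ext:distortion}
\frac{1}{K}\,\Ext_X(\gamma)\ \leq\ \Ext_Y(f(\gamma))\ \leq\ K\,\Ext_X(\gamma).
\end{equation}
This follows from the analytic definition: given a conformal metric $\rho$ on $X$ that nearly realizes $\Ext_X(\gamma)$, push it forward (appropriately) via $f$ to get a metric on $Y$, and estimate the change in length and area using the pointwise dilatation bound. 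Taking $f$ to be the Teichm\"{u}ller map, so that $K=K(f)=e^{2d_{\Teich(S)}(X,Y)}$, and noting that $f$ preserves isotopy classes of curves via the markings, \eqref{eq:ext:distortion} gives $\Ext_X(\gamma)/\Ext_Y(\gamma)\leq K$ for every $\gamma$, hence the desired lower bound on $d_{\Teich(S)}$.

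For the upper bound, I would need to show that the Teichm\"{u}ller map actually \emph{achieves} a ratio of extremal lengths arbitrarily close to $K$; equivalently, that one cannot do better than extremal-length ratios when computing distance. Let $q\in\QD^1(X)$ be the initial quadratic differential of the Teichm\"{u}ller map $f:X\to Y$, with vertical foliation $\mathcal{F}_v(q)$ stretched by $K$. The idea is to approximate $\mathcal{F}_v(q)$ by a weighted simple closed curve (or a sequence of simple closed curves $\gamma_n$) that is ``nearly vertical'', i.e. almost entirely carried by the vertical foliation. For such a curve, the flat $q$-metric on $X$ is nearly extremal, giving $\Ext_X(\gamma_n)\approx (\text{horizontal variation of }\gamma_n)^2/\|q\|$; after applying $f$, the horizontal variation is multiplied by $K$ while the area is preserved, so $\Ext_Y(\gamma_n)\approx K^2\Ext_X(\gamma_n)/$(correction). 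More carefully, one uses that for a curve tracked by the vertical foliation the extremal length in $X$ is governed by the horizontal length and in $Y$ by $K$ times the horizontal length, while the transverse measure in the vertical direction contributes negligibly, yielding $\Ext_Y(\gamma_n)/\Ext_X(\gamma_n)\to 1/K$ — wait, one must instead take curves nearly \emph{horizontal}, so that stretching by $K$ in the horizontal direction makes the extremal-length ratio tend to $K^2$... The correct choice and bookkeeping is exactly the subtle point. The clean way, following Kerckhoff, is: density of weighted simple closed curves in the space of measured foliations $\mathcal{MF}(S)$ lets us choose $\gamma_n\to \mathcal{F}_h(q)$ projectively; the extremal length function extends continuously to $\mathcal{MF}(S)$ and satisfies $\Ext_X(\mathcal{F}_h(q))=\|q\|$ and $\Ext_Y(\mathcal{F}_h(q(t)))$ scales correctly along the Teichm\"{u}ller ray, so that $\Ext_X(\gamma_n)/\Ext_Y(\gamma_n)\to K^2/$ (and by the reciprocal behavior of $\mathcal{F}_v$, the sup is genuinely attained in the limit at value $K^2 = e^{4d}$... no: $K$). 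I would pin down the exponent by a direct computation in the flat $q$-coordinates on a single cylinder and then globalize.

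The main obstacle is the upper bound, and within it the technical heart is twofold: (i) establishing continuity of $\gamma\mapsto\Ext_X(\gamma)$ on the space of measured foliations (or measured laminations) so that approximation by simple closed curves is legitimate, and (ii) showing that along the Teichm\"{u}ller ray determined by $q$, the extremal length of the (approximating) curves behaves, to leading order, like pure stretching by the flat metric — i.e. that the ``wrong-direction'' contribution to extremal length is lower-order. I would handle (i) by Kerckhoff's original argument (extremal length extends to a continuous, even real-analytic, function on $\mathcal{MF}(S)$ homogeneous of degree $2$), and (ii) by working in the natural $q$-coordinates where $f$ is affine, carefully comparing the flat-metric length of a nearly-horizontal curve before and after applying the diagonal map $\mathrm{diag}(K,1/K)$ and using that the flat metric is a legitimate competitor in the analytic definition of extremal length on both $X$ and $Y$. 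Combining both inequalities yields \eqref{eq:kerckhoff:formula}.
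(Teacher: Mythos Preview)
The paper does not give its own proof of this statement: Theorem~\ref{theorem:kerckhoff:formula} is quoted from Kerckhoff \cite{Ke} as a black box, followed only by a remark that the supremum may equivalently be taken over $\MF(S)$. So there is no proof in the paper to compare your proposal against.

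That said, your outline is essentially Kerckhoff's original argument, and the easy direction is correctly sketched: the distortion bound \eqref{eq:ext:distortion} for extremal length under a $K$-quasiconformal map immediately gives $\sup_\gamma \Ext_X(\gamma)/\Ext_Y(\gamma)\leq K(f)=e^{2d_{\Teich(S)}(X,Y)}$.

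For the hard direction, you have identified the right ingredients---density of weighted simple closed curves in $\MF(S)$ and continuity of extremal length there---but your write-up visibly gropes for the correct foliation and the correct exponent (``wait, one must instead\ldots'', ``$K^2/\ldots$ no: $K$''). This is not just cosmetic: until you nail down that the supremum in \eqref{eq:kerckhoff:formula} is realized in the limit by curves approximating the \emph{vertical} foliation of the initial differential $q$ (equivalently, the horizontal foliation of the terminal differential), and that for this foliation $\Ext_X/\Ext_Y$ equals exactly $K(f)$ via the identity $\Ext_X(\mathcal{F}_h(q))=\|q\|$ together with the scaling of transverse measures along the Teichm\"uller map, you do not actually have a proof. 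The clean bookkeeping is: if $\|q\|=1$ on $X$ and the Teichm\"uller map stretches horizontal lengths by $e^t$ and contracts vertical lengths by $e^{-t}$ (so $K(f)=e^{2t}$), then the terminal differential $q'$ on $Y$ has $\|q'\|=1$ and $\mathcal{F}_v(q')=e^{t}\mathcal{F}_v(q)$ as measured foliations, whence $\Ext_Y(\mathcal{F}_v(q))=e^{-2t}\Ext_Y(\mathcal{F}_v(q'))=e^{-2t}$ and the ratio is $e^{2t}=K(f)$. Approximating $\mathcal{F}_v(q)$ by simple closed curves and invoking continuity then finishes. I would rewrite the second half with this computation made explicit rather than left as a moving target.
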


\begin{remark} The definition of extremal length is easily
extended to measured foliations.  The density of simple closed
curves in the space $\MF(S)$ of measured foliations on $S$ 
allows us to replace the right
hand side of (\ref{eq:kerckhoff:formula}) by the supremum taken
over all $\gamma\in\MF(S)$. \end{remark}

\subsection{Extremal length estimates along Strebel rays}

Let $(X,q)$ be a Riemann surface $X\in\Teich(S)$ with Strebel
differential $q\in\QD(X)$, and let $r=r_{(X,q)}$ be the
corresponding Strebel ray.  Our goal in this subsection is to
estimate the extremal length $\Ext_{r(t)}(\beta)$ of an arbitrary
(isotopy class of) simple closed curve $\beta$ as the underlying
Riemann surface moves along the ray $r$.  The following estimates
are due to Kerckhoff \cite{Ke}.  We include proofs here for
completeness, and because these estimates are so essential for
this paper.

The setup will be as follows.  Let $C_i, 1\leq i\leq n$ be 
the cylinders of the Strebel differential $q$, and for each $i$ let
$\alpha_i$ denote the homotopy class of the core curve of $C_i$.  
Let $a_i(t)$ denote the
$q(t)$-length of $\alpha_i$ and let $b_i(t)$ denote the 
$q(t)$-height of $C_i$.   
Let $M_i(t)=\text{mod}(C_i)=b_i(t)/a_i(t)$ be 
the modulus. Note that on the Riemann surface $r(t)$ we have 
$$a_i(t)=e^{-t}a_i(0)$$
and
the height $b_i(t)$ of the cylinder $C_i$ satisfies $$b_i(t)=e^tb_i(0).$$  

Recall that the {\em geometric intersection number} of two isotopy classes of simple closed curves $\alpha,\beta$, denoted $i(\alpha,\beta)$, is the miminal number of intersection points of curves 
$\alpha'$ and $\beta'$ isotopic to $\alpha$ and $\beta$, respectively.

\begin{lemma} 
\label{lem:short1}
With notation as above, the following hold:
\begin{enumerate}
\item 
$\lim_{t\to\infty}e^{2t}M_i(0)\text{Ext}_{r(t)}(\alpha_i)=1.$
\item There is a constant $c>0$  such that if $i(\beta,\alpha_i)=0$ for all $i$ and $\beta$  is not isotopic to any of the $\alpha_i$, then for all $t$ large enough,  $$\text{Ext}_{r(t)}(\beta)\geq c.$$
\item There is a constant $c>0$ such that if  $\beta$ crosses $C_i$  
 then for $t$ large enough, $$\text{Ext}_{r(t)} (\beta)\geq ce^{2t}.$$

\end{enumerate}

\end{lemma}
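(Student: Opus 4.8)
The plan is to prove each of the three estimates using the two definitions of extremal length from the excerpt: the analytic definition (a supremum over conformal metrics) for lower bounds, and the geometric definition (an infimum over moduli of embedded cylinders) for upper bounds. Throughout, the key point is that on $r(t)$ the cylinder $C_i$ has been stretched vertically by $e^t$ and shrunk horizontally by $e^{-t}$, so $M_i(t)=e^{2t}M_i(0)$, while the flat structure away from the cylinders is essentially ``rotated'' in the same way. I will also repeatedly use that a curve $\beta$ crossing $C_i$ must pick up vertical length at least $b_i(t)=e^t b_i(0)$ per crossing, and that a curve with $i(\beta,\alpha_i)=0$ for all $i$ can be pushed into the critical graph / minimal components together with short arcs in the cylinders.

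For part (1), the upper bound $\Ext_{r(t)}(\alpha_i)\le 1/M_i(t)=e^{-2t}/M_i(0)$ is immediate from the geometric definition applied to $C_i$ itself. For the matching lower bound one uses the analytic definition with the flat metric $|q(t)|$ restricted to $C_i$ (extended by zero elsewhere): a representative of $\alpha_i$ either stays in $C_i$, where it has length $\ge a_i(t)$, or leaves $C_i$; a standard argument (used by Kerckhoff) shows the minimizing representative stays in $C_i$, giving $\ell_\rho(\alpha_i)^2/\Area_\rho \ge a_i(t)^2/(a_i(t)b_i(t)) = 1/M_i(t)$. Combining, $e^{2t}M_i(0)\Ext_{r(t)}(\alpha_i)\to 1$; in fact with a little care one gets equality in the limit, since the error terms from curves that exit $C_i$ are of lower order as $t\to\infty$.

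For part (2), suppose $\beta$ is disjoint from every $\alpha_i$ and not isotopic to any of them. Then $\beta$ can be isotoped off all the cylinders $C_i$, hence into the union of the critical graph and the minimal components, which is a closed (non-cylinder) subsurface whose flat structure is, up to the $e^{\pm t}$ anisotropic scaling, bounded. Because $\beta$ is an essential curve not peripheral to any cylinder, its $q(t)$-length is bounded below away from $0$ uniformly in $t$: horizontally it is compressed, but then its vertical extent is bounded below, or vice versa; quantitatively one takes $\rho = |q(t)|$ normalized to area $1$ and notes $\ell_\rho(\beta)$ is bounded below, giving $\Ext_{r(t)}(\beta)\ge \ell_\rho(\beta)^2 \ge c$. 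The main subtlety here — and the step I expect to be the real obstacle — is making the lower bound on $\ell_\rho(\beta)$ genuinely uniform in $t$: one must rule out that $\beta$ becomes short by simultaneously shrinking in the horizontal direction in one part of the surface and in the vertical direction in another; this is handled by observing that $\beta$ must cross some fixed saddle connection transversally and picking out whichever of the two transverse measures of that saddle connection does not degenerate, or more robustly by a compactness argument on the Thurston-type normalized limit of the $q(t)$.

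For part (3), if $\beta$ crosses $C_i$ then any representative of $\beta$ contains an arc crossing $C_i$, hence has $q(t)$-length at least the height $b_i(t)=e^t b_i(0)$; taking $\rho=|q(t)|$ of total area $\|q\|$ (constant along the ray) gives $\ell_\rho(\beta)\ge e^t b_i(0)$ and therefore $\Ext_{r(t)}(\beta)\ge \ell_\rho(\beta)^2/\|q\| \ge c\, e^{2t}$. This is the easiest of the three, requiring only the analytic definition and the crossing lower bound on length.
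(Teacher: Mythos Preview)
Your argument for part (3) is correct and matches the paper. Parts (1) and (2), however, both contain genuine gaps.

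\textbf{Part (1), lower bound.} Taking $\rho=|q(t)|^{1/2}$ on $C_i$ and zero elsewhere does not give $\ell_\rho(\alpha_i)\ge a_i(t)$. When $n\ge 2$, the core curve $\alpha_i$ can be isotoped to a boundary circle of $C_i$ and then pushed slightly into the adjacent cylinders, producing representatives whose $\rho$-length is arbitrarily small (indeed $\ell_\rho(\alpha_i)=0$). Your computation would in fact yield $\Ext_{r(t)}(\alpha_i)=1/M_i(t)$ exactly for all $t$, which is false for $n\ge 2$: an annulus homotopic to $\alpha_i$ can extend beyond $C_i$, so $\Ext_{r(t)}(\alpha_i)<1/M_i(t)$ strictly. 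The paper avoids this by building a test metric that is large on a fixed collar $\Nbhd(C_i)\setminus C_i$ around $C_i$ and tiny elsewhere; the collar prevents escape and contributes only $O(1)$ to the area, against the dominant term $e^{2t}a_i(0)b_i(0)$, which is what produces a $(1-\epsilon)$ factor rather than equality.

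\textbf{Part (2).} For a Strebel differential there are no minimal components: the complement of the cylinders is exactly the critical graph $\Gamma$, a union of \emph{vertical} saddle connections. Along the ray, vertical lengths scale by $e^{-t}$, so any representative of $\beta$ lying in $\Gamma$ has $q(t)$-length $\asymp e^{-t}\to 0$. With $\rho=|q(t)|^{1/2}$ (area $1$) you only get $\Ext_{r(t)}(\beta)\ge c\,e^{-2t}$, which is useless. Your suggested dichotomy ``horizontally compressed, hence vertical extent bounded below, or vice versa'' does not apply: in $\Gamma$ everything is vertical, so both the length and the putative transverse saddle connection shrink. The paper's fix is to rescale the metric by $e^t$ on a fixed neighborhood $N$ of the component of $\Gamma$ containing $\beta$, keeping $|q(t)|$ outside; in this metric $\beta$ has length bounded below, any competing representative that leaves $N$ bounds a disc with some $\alpha_i$ and can be shortened back into $N$, and the total area stays bounded. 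That rescaling near $\Gamma$ is the missing idea.
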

\begin{proof}
To prove Statement (1) we recall that the geometric definition of extremal length says  that $$\Ext_{r(t)}(\alpha_i)=
\inf\frac{\displaystyle1}{\displaystyle \mod(A)},$$ where the infimum 
is taken over all cylinders $A\subset r(t)$ homotopic to
$\alpha_i$. 
Statement 1 is immediate in the case that $n=1$, for then by Theorem 20.4(3 ) of \cite{St}, taken with $i=1$,  the modulus of a 
one-cylinder Strebel differential realizes the supremum of the moduli of all cylinders homotopic to $\alpha_1$, so that the reciprocal realizes the infimum of the reciprocals of the moduli in the geometric definition. In that case 
the limit in Statement 1 is actually an equality  for each $t$.   Thus assume $m>1$. 
 On $r(t)$, the cylinder $C_i$ has modulus $e^{2t}b_i(0)/a_i(0)=e^{2t}M_i(0)$, giving the bound
$$\text{Ext}_{r(t)}(\alpha_i)\leq \frac{e^{-2t}}{M_i(0)}.$$

We now give a lower bound. We can realize the surface $r(t)$ by
cutting along the core curves of the cylinders, that are halfway across eachcylinder,  inserting
cylinders of circumference $a_i(0)$ and height $\frac{b_i(0)(e^{2t}-1)}{2}$ to each side of the cut and then regluing. Rescaling by $e^t$ the flat metric
induced by $q(t)$, gives a flat metric $\rho(t)$ of area $e^{2t}$
for which the core curves have constant length $a_i(0)$ and
height $e^{2t}b_i(0)$.  Choose a constant $b$ such that
$b>a_i(0)$, and for $t_0$ sufficiently large, choose a fixed
neighborhood $\Nbhd(C_i)$ of $C_i$ on $r(t_0)$ such that
$$d_{\rho(t_0)}(C_i,\partial \Nbhd(C_i))=b.$$ For some fixed $B>0$
we have $$\text{area}_{\rho(t_0)}(\Nbhd(C_i)\setminus C_i)=B.$$ Via the construction described above, we
may
 think of $\Nbhd(C_i)\setminus C_i$ as a subset of $r(t)$ for $t\geq t_0$.  
 Define a conformal metric $\sigma_i(t)$ on  $r(t)$ as follows. 
It is  given by  $\rho(t)$ on $C_i$.  On $\Nbhd(C_i)\setminus
C_i$ it is given by the metric  $\rho(t_0)$, and 
on  $r(t)\setminus \Nbhd(C_i)$  it is 
given by $\delta \rho(t)$ for some $\delta>0$.      
With respect to the metric $\sigma_i(t)$ we then have  
$$d_{\sigma_i(t)}(C_i,\partial \Nbhd(C_i))=b$$ and

$$\text{Area}_{\sigma_i(t)}\leq B+\delta
e^{2t}+e^{2t}a_i(0)b_i(0).$$ Since the distance across
$\Nbhd(C_i)\setminus C_i$ is at least $b\geq a_i(0)$, it is easy
to see that $$\ell_{\sigma_i(t)}(\alpha_i)=a_i(0).$$ Putting the
estimates on lengths and areas together, it follows that given
any $\epsilon>0$, we may choose $\delta>0$ so that for $t$ large
enough,
$$\text{Ext}_{r(t)}(\alpha_i)\geq\frac{\ell_{\sigma_i(t)}^2(\alpha_i)}{A_{\sigma_i(t)}}\geq
(1-\epsilon)\frac{e^{-2t}}{M_i(0)} .$$ Putting this lower bound
together with the upper bound we have proved (1).

For the proof of (2), for $t_0$ large enough, take a fixed neighborhood $N$ of the
component of the critical graph $\Gamma$ that contains $\beta$
such that the distance across $N$ is at least $\min_i a_i(0)$, the 
lengths of the core curves of the cylinders on the base surface $r(0)$.  Again we may consider $N$ as
a subset of $r(t)$ for all $t\geq 0$.  We put a conformal metric
$\sigma(t)$ on $r(t)$ which is given by the flat metric defined by $q(t)$ on
$r(t)\setminus N$ and the metric defined by $q(t)$ scaled by $e^t$ on $N$.  For some fixed $B>0$ we
have $$\text{Area}_{\sigma(t)}\leq B.$$ Now any geodesic representative of $\beta$ that enters $r(t)\setminus N$ must bound  a disc with a core curve of $C_i$, and can be shortened to lie entirely inside $N$.  Thus its geodesic representative in fact lies in the critical graph and so there is a $b$ such that
$$\ell_{\sigma(t)}(\beta)\geq b.$$ The lower bound now follows
from these last two inequalities and the analytic definition of extremal length.

The proof of (3) follows by using the given metric $q(t)$ in the analytic definition of extremal length.  
\end{proof}

\section{EDM and ADM rays in moduli space}

In this section we classify EDM and ADM rays in moduli space,
giving a proof of Theorem \ref{theorem:rays}.  We then determine,
in \S\ref{section:asymptote}, when two EDM rays are asymptotic.   

\subsection{Strebel rays are EDM}

Our goal in this subsection is to prove one direction of 
Theorem \ref{theorem:rays}, namely that 
if $(X,q)$ is Strebel then the ray $r_{(X,q)}$ in $\M(S)$ 
is eventually distance minimizing.   

Since $\Mod(S)$ acts properly by isometries on $\Teich(S)$ with quotient $\M(S)$, the distance between points $x,y\in \M(S)$ are the same as minimal distances between orbits of any lift of $x,y$ to $\Teich(S)$.   We warn the reader that while 
every ray in $\M(S)$ comes from the projection to $\M(S)$ of a ray in $\Teich(S)$, the converse is not true; this is due to the fixed points of the action of $\Mod(S)$ on $\Teich(S)$.  

Thus, to achieve our goal, we must find $t_0\geq 0$ so 
that
\begin{equation}
\label{eq:edmteich} 
d_{\Teich(S)}(r(t),r(t_0))\leq d_{\Teich(S)}(\phi(r(t_0)),r(t))
\end{equation}
for all $t\geq t_0$ and for every $\phi\in\Mod(S)$.  
In fact we will prove for Strebel rays that the inequality in (\ref{eq:edmteich}) is
strict for $t>t_0$, as long as $\phi$ doesn't have a fixed point.

\begin{remark}
Note that while any two nonseparating curves
on $S$ can be taken to each other via some element of $\Mod(S)$, Strebel
rays along cylinders with nonseparating core curves, based at the same
$Y\in\Teich(S)$, project to different rays in $\M(S)$.  Indeed, given 
any point $X\in\M(S)$, there are countably infinitely many Strebel
rays in $\M(S)$ based at $X$, even though there are $[g/2]+1$
topological types of simple closed curves on $S$.  
\end{remark}

\bigskip

Let $\alpha_1,\ldots ,\alpha_p$ denote the core curves of the cylinders $\{C_i\}$ in the cylinder decomposition of $(X,q)$.  By Lemma \ref{lem:short1}, the extremal length of
curves $\beta$ with $i(\beta,\alpha_i)=0$ for each $1\leq i\leq p$ and not homotopic to any $\alpha_i$ remain bounded below by some $d>0$. By Lemma~\ref{lem:short1} 
the extremal length of any 
curve $\beta$ with $i(\beta,\alpha_i)>0$ for some $i$ tends to $\infty$ as $t\to\infty$.
Choose $t_0$ big enough so that each of the following holds:
\begin{enumerate}
\item If $i(\beta,\alpha_i)>0$ for some $i$, then $\Ext_{r(t)}(\beta)\geq d$ for $t\geq t_0$.
\item  $e^{2t_0}>2\max_i(\frac{M_i}{d})$, where $M_i$ is the modulus of the cylinder $C_i$   
 \item For $t\geq t_0$, $\text{Ext}_{r(t)}(\alpha_i)\leq
2e^{-2t}M_i$.  (This can be done by 
 Lemma~\ref{lem:short1}). 
 \end{enumerate}
 
Let $\phi$ be any element of $\Mod(S)$ without a fixed point in $\Teich(S)$; this is the same as $\phi$ not having finite order.  Suppose first that 
$\phi^{-1}(\alpha_i)=\beta\notin\{\alpha_j\}$ for some $i$.  By Theorem 
\ref{theorem:kerckhoff:formula} we have for $t>t_0$:
$$
\begin{array}{ll}
d_{\Teich(S)}(\phi(r(t_0)),r(t))&\geq  \frac{1}{2}\log
\frac{\displaystyle \Ext_{\phi(r(t_0))}(\alpha_i)}{\displaystyle 
\Ext_{r(t)}(\alpha_i)}\\
&\\
&=\frac{1}{2}\log\frac{\displaystyle\Ext_{r(t_0)}(\beta)}{\displaystyle
\Ext_{r(t)}(\alpha_i)}\\
&\\
&\geq\frac{1}{2}\log \frac{\displaystyle d}{\displaystyle 2e^{-2t}M_i}\\
&\\
&>\frac{1}{2}\log \frac{\displaystyle e^{2t}}{\displaystyle e^{2t_0}}=t-t_0=d_{\Teich(S)}(r(t),r(t_0))
\end{array}.$$

Thus we may assume that $\phi$ preserves $\{\alpha_i\}$ as a set.  Consider the special 
case when $\phi(\alpha_i)=\alpha_i$ for each $i$. This assumption
implies that $\phi^{-1}$ preserves the vertical foliation ${\mathcal F}_v(q)$ of $q$, as a
measured foliation.  Then 
\begin{equation}
\label{eq:edm:streb}
d_{\Teich(S)}(\phi(r(t_0)),r(t))\geq\frac{1}{2} \log
\frac{\Ext_{\phi(r(t_0))}({ \mathcal F}_v(q))}{\Ext_{r(t)}({\mathcal F}_v(q))}= \frac{1}{2}\log
\frac{\Ext_{r(t_0)}({\mathcal F}_v(q))}{\Ext_{r(t)}({\mathcal F}_v(q))}=t-t_0
\end{equation}
and we are again done in this case.  The leftmost inequality follows from the remark after 
Theorem \ref{theorem:kerckhoff:formula}.

We remark that the inequality (\ref{eq:edm:streb}) is strict.  This is because equality
of the leftmost terms occurs if and only if ${\mathcal F}_v(q)$ is 
the vertical foliation of the
quadratic differential defining the Teichmuller map from $\phi(r(t_0))$ to
$r(t)$.  However, ${\mathcal F}_v(q)$  is the vertical foliation of the quadratic
differential of the Teichmuller map from $r(t_0)$ to $r(t)$, and so
it cannot be the former since $\phi$ is assumed to be nontrivial.

Finally, consider the general case of $\phi$ preserving
$\{\alpha_i\}$ as a set.  Let $k$ be the smallest integer such that
$\phi^k(\alpha_i)=\alpha_i$ for all $i$.  If the desired result is not
true there is a sequence of times $t_0<t_1<\ldots <t_k$ such that
$$d_{\Teich(S)}(r(t_{i-1}),\phi(r(t_i)))<
d_{\Teich(S)}(r(t_{i-1}),r(t_i)).$$ 

Since $\phi$ acts as an isometry of $\Teich(S)$, applications of the
triangle inequality give

$$d_{\Teich(S)}(r(t_0),\phi^k(r(t_k)))<d_{\Teich(S)}(r(t_0),r(t_k)).$$ 

But $\phi^k$
fixes each $\alpha_i$, and we have a contradiction to the previous
assertion.

\subsection{Every EDM ray is Strebel}
\label{section:edm:implies:strebel}

In this subsection we prove the other direction of
Theorem \ref{theorem:rays}, namely that if a ray
$r_{(X,q)}:[0,\infty)\to\M(S)$ is EDM then $(X,q)$ is Strebel.
The idea of the proof is explained in the introduction above.  Since $r$ is EDM, we can change 
basepoint and assume that $r$ is (globally) isometrically embedded.  We henceforth assume this.

Recall that for each $t\geq 0$, the ray $r=r_{(X,q)}$ determines the
following data: the Riemann surface
$r(t)\in\M(S)$, the quadratic differential $q(t)\in\QD^1(r(t))$,
and the vertical
foliation ${\mathcal F}_v(q(t))$ for the quadratic differential $q(t)$.  
Let $\Gamma(t)$ denote the critical graph of $q(t)$, so that $\Gamma(t)$
is the union of the vertical saddle connections of $q(t)$.  Note that
$\Gamma(t)$ may be empty.  

For any quadratic differential $q$ on a Riemann surface $X$, 
let $\Sigma$ denote the set of zeroes of $q$.
We define the {\em diameter} of $X$ (in the $q$-metric $d_q$), denoted $\diam(X)$, to be 
$$\diam(X):=\sup_{x\in X} d_q(x,\Sigma).$$

Now suppose that the ray $r=r_{(X,q)}$ is not Strebel.  This assumption
implies that there is some subsurface $Y(t)\subseteq r(t)$ 
which contains some leaf of ${\mathcal F}_v(q(t))$ which is dense in $Y(t)$. 
We will find a contradiction. 

\bigskip
\noindent
{\bf Step 1 (Delaunay triangulations): }  

\begin{proposition} 
\label{proposition:delaunay} 
There is a
triangulation $\Delta(t)$ on $r(t)$ with the following
properties: 
\begin{enumerate} 
\item The vertices of $\Delta(t)$
lie in the zero set of $q(t)$.  
\item The edges of $\Delta(t)$
are saddle connections of $q(t)$.  
\item For $t$ large enough,
every edge of the vertical critical graph $\Gamma(t)$ is an edge
of $\Delta(t)$.  
\item There is a function $c(t)$ with $c(t)\to
\infty$ as $t\to \infty$ so that every triangle in $\Delta(t)$
whose interior is contained in some minimal component $Y$, can be
inscribed in a circle of radius at most $e^t/c(t)$.
 \end{enumerate}
\end{proposition}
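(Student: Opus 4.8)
The plan is to invoke the Delaunay (Voronoi-dual) cell decomposition for flat surfaces with cone points, following the approach of Masur–Smillie / Bowditch, and then upgrade it to a triangulation with the desired metric control. First, fix $t$ and work on the flat surface $(r(t), q(t))$ with cone points $\Sigma(t)$ equal to the zero set of $q(t)$. Construct the Voronoi decomposition of $r(t)$ with respect to the finite set $\Sigma(t)$: the cell associated to a zero $p$ consists of points closer to $p$ than to any other zero (with ties broken along walls). The dual cell decomposition — the Delaunay decomposition — has the zeros of $q(t)$ as vertices, and its edges are geodesic segments between zeros, i.e. saddle connections; this gives properties (1) and (2) automatically. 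A Delaunay cell need not be a triangle, but any non-triangular Delaunay cell is inscribed in a metric disk (the empty circumscribed-disk condition), so we may subdivide it into triangles by adding diagonals that are themselves saddle connections without increasing the circumscribing radius. This yields the triangulation $\Delta(t)$, still satisfying (1) and (2).

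Next I would establish (3): every vertical saddle connection is a Delaunay edge for $t$ large. The key point is that along a vertical saddle connection $\sigma$ the transverse (horizontal) measure is being contracted by $e^{-t}$, so $\sigma$ becomes very short relative to the rest of the surface; meanwhile any competing saddle connection crossing it has horizontal extent bounded below. Concretely, a saddle connection $\sigma$ is a Delaunay edge iff there is an embedded metric disk having the two endpoints of $\sigma$ on its boundary and no zero in its interior; for a vertical $\sigma$ of $q(t)$-length $\ell e^{-t}$, the disk of radius slightly more than $\tfrac12\ell e^{-t}$ about the midpoint of $\sigma$ does the job once $t$ is large, because the $q(t)$-distance from $\sigma$ to the nearest other zero stays bounded below (it is essentially a horizontal distance, hence at worst constant, while the radius we need $\to 0$). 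Slightly more care is needed at the endpoints of $\sigma$ where other saddle connections emanate, but a standard perturbation of the disk center handles this. Alternatively one can appeal directly to the fact that sufficiently short saddle connections are always Delaunay edges, applied to the vertical ones.

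The main work — and the step I expect to be the real obstacle — is (4): the circumscribing-radius bound $e^t/c(t)$ with $c(t)\to\infty$ for triangles inside a minimal component $Y$. The idea is that a minimal component of $\mathcal{F}_v(q(t))$ is ``filled'' by a dense vertical leaf, so it contains no large embedded flat disk: a disk of radius $R$ would contain a vertical segment of length $2R$ that, together with the minimality/unique ergodicity structure, forces many returns and bounds $R$ in terms of the systole-type data of $Y$. Quantitatively, rescale the $q(t)$-metric by $e^{-t}$ so that $Y$ has $q(t)$-area comparable to $e^{-2t}$ times a fixed constant on $r(0)$; since $Y$ is a minimal component, the largest embedded disk in it has radius $o(1)$ relative to $\sqrt{\mathrm{area}}$ — more precisely one shows the in-radius of a minimal component is bounded by a quantity that, after unrescaling, is $e^t/c(t)$ with $c(t)\to\infty$, because the vertical foliation's decreasing transverse measure forces the embedded disks in $Y$ to shrink in the vertical direction while the Delaunay/empty-disk property ties the circumradius of a minimal-component triangle to an embedded disk radius. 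The technical heart is controlling $c(t)$ uniformly, which I would do by contradiction: if some sequence $t_k\to\infty$ had minimal-component triangles with circumradius $\geq e^{t_k}\varepsilon$, rescale by $e^{-t_k}$ to get embedded disks of definite radius $\varepsilon$ inside the minimal components; a compactness/limiting argument (Vorobets-type, or via the $\mathrm{SL}(2,\mathbb{R})$ action and nondivergence) would produce a flat surface with a minimal vertical foliation containing an embedded Euclidean disk of radius $\varepsilon$, contradicting that minimal vertical foliations admit no such disk (a disk is a half-cylinder of vertical trajectories, which are all closed). Once (4) is in hand, the Delaunay triangles in the minimal part are quantitatively small, which is exactly the input needed for the contradiction driving the ``EDM $\Rightarrow$ Strebel'' argument.
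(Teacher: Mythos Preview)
Your treatment of (1)--(3) via the Masur--Smillie Delaunay construction matches the paper, though your first justification in (3) is not quite right: zeros lying on other edges of $\Gamma(t)$ can be at distance $O(e^{-t})$ from the midpoint of $\sigma$, not bounded below. What one actually needs (and what the paper proves, using the lemma described below) is that for $t$ large every \emph{non-vertical} saddle connection is strictly longer than every vertical one; then the midpoint argument you sketch goes through.

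The genuine gap is in (4). Your proposed contradiction is false: a flat surface whose vertical foliation is minimal certainly contains embedded Euclidean disks of positive radius --- the presence of such a disk says nothing about whether the vertical leaves through it close up or are dense. Nor is there any compactness to invoke: after rescaling by $e^{-t_k}$ the area tends to $0$, while without rescaling the sequence $(r(t_k),q(t_k))$ may diverge in the stratum (the systole inside $Y$ can still tend to $0$ along the ray). Nondivergence theorems for the $\SL(2,\R)$ action are measure-theoretic and do not control an individual orbit. Finally, the Delaunay circumscribing disk lives in the developing plane and need not be embedded in $Y$, so the passage from ``large circumradius'' to ``large embedded disk'' is itself unjustified.

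The paper's argument for (4) is direct and hinges on a quantitative lemma (Lemma~\ref{lem:short2}): the shortest saddle connection with interior in the minimal component $Y$ has $q(t)$-length at least $c(t)e^{-t}$ with $c(t)\to\infty$. The mechanism is the \emph{discreteness of the holonomy set} of saddle connections on the fixed surface $(r(0),q(0))$: one first bounds $|\beta(t)|_t$ above by a constant (using that long Delaunay edges must cross a flat cylinder, \cite{MS}~Prop.~5.4); since $|\beta(t)|_t\ge e^t|\beta(t)|_0^{\rm horiz}$ this forces $|\beta(t)|_0^{\rm horiz}\to 0$, while minimality of $Y$ forbids $|\beta(t)|_0^{\rm horiz}=0$; discreteness then forces $|\beta(t)|_0^{\rm vert}\to\infty$, whence $|\beta(t)|_t\ge e^{-t}|\beta(t)|_0^{\rm vert}=:c(t)e^{-t}$. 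Given this lemma, (4) follows by bounding $\diam(Y)$: large diameter forces a flat cylinder in $Y$ (again \cite{MS}), whose circumference is $\ge c(t)e^{-t}$ by the lemma, hence whose height is $\le e^t/c(t)$ by the unit-area constraint; this bounds $\diam(Y)$ and hence the Delaunay circumradius via \cite{MS}~Thm.~4.4.
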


\begin{proof} The triangulation $\Delta(t)$ will be the {\em
Delaunay triangulation} $\Delta(t)$ constructed by Masur-Smillie
in \S 4 of \cite{MS}.  In particular, $\Delta(t)$ automatically satisfies
(1) and (2).  We now claim something very special about $\Delta(t)$.

\begin{lemma}
\label{lem:short2}
There is a function $c(t)$ with $\lim_{t\to\infty} c(t)=\infty$ with the
following property: the shortest saddle connection $\beta(t)$ of the quadratic
differential $q(t)$ on $r(t)$, whose endpoints lie in $\bar{Y}\cap \Sigma$, and whose interior lies in $Y$, 
has length 
at least $c(t)e^{-t}$.
\end{lemma}

\begin{proof}[of Lemma \ref{lem:short2}]
Denote by  $|\cdot|_t$ the length function associated to flat metric on $r(t)$ induced by $q(t)$.   
For an arc $\alpha$, we denote by $|\alpha|^{\rm vert}_t$ (resp. $|\alpha|^{\rm horiz}_t)$ the length of 
$\alpha$ as measured with respect to the transverse measure $|dy|$ on ${\mathcal F}_h(q(t))$ 
(resp. $|dx|$ on ${\mathcal F}_v(q(t))$).  

We claim that there is a constant $D$ such that $|\beta(t)|_t\leq
D$.  To prove the claim, consider an edge $E$ of the Delaunay
triangulation $\Delta(t)$ with $E\cap Y\neq \emptyset$.  First suppose
$|E|_t\leq s=:2\sqrt{2/\pi}$.  If $E\subset Y$ then take $D=s$
and we are done.  If $E$ is not contained in $Y$, then it crosses
some edge $\alpha$ of $\Gamma(t)$.  We remind the reader that, as
we move out along $r(t)$, the horizontal lengths are expanded by
$e^t$ and the vertical length are contracted by $e^{-t}$.  Thus
we have the equation 

\begin{equation} \label{eq:crit2}
|\alpha|_t=e^{-t}|\alpha|_{0}.  
\end{equation}

But then we can take some subsegment of $E$, together with a
union of at most two subsegments of $\Gamma(t)$, to give a
nontrivial homotopy class of arc with endpoints in $\bar{Y}\cap
\Sigma$ and interior contained in $Y$.  The geodesic
representative $\beta(t)$ in this homotopy class has length
bounded above by the length of $E$ plus the length of
$\Gamma(t)$, which for large enough $t$ is less than $D=s+1$, and
we are done.

We are now reduced to the case where $E$ has length at least $s$.  By Proposition 5.4 of \cite{MS}, $E$ must cross some flat cyclinder 
$C$ in $r(t)$ whose height is greater than its circumference.  If $C\subset Y$, then since 
$Y$ has area at most $1$, the circumference is at most $1$, and so taking $\beta(t)$ to be the circumference, we have $|\beta(t)|_t\leq 1$.  If $C$ is not contained in $Y$, then $C$ crosses the critical graph $\Gamma(t)$.   Thus the height of $C$ is bounded, as in  (\ref{eq:crit2}).  Thus the 
circumference is bounded as well.  An argument similar to the previous paragraph then provides 
$\beta(t)$, and the claim is proved.

We now continue with the proof of the lemma.  We have 
$$|\beta(t)|_t\geq  |\beta(t)|^{\rm
horiz}_t=e^{t}|\beta(t)|^{\rm horiz}_{0}.$$
Since $|\beta(t)|_t$ is bounded, we must have  
 $|\beta(t)|^{\rm horiz}_{0}\to 0$ as $t\to\infty$. Since $Y$ is assumed to be minimal, there are no vertical saddle connections in $Y$,  and so 
$|\beta(t)|^{\rm horiz}_{0}>0$.  Because the set of holonomy vectors of saddle connections is a 
discrete subset of $\R^2$ for a fixed flat structure  (see, e.g, \cite{HS}), this forces 
$|\beta(t)|_{0}^{\rm vert}\to \infty$ as $t\to\infty$. 
Now $$|\beta(t)|_t \geq |\beta(t)|_t^{\rm vert}=e^{-t}|\beta(t)|_0^{\rm vert}.$$
Thus the desired inequality holds with $c(t)=|\beta(t)|^{\rm vert}_{0}$.
\end{proof}

We continue with the proof of Proposition \ref{proposition:delaunay}.

For 
$t$ sufficiently large, by Lemma~\ref{lem:short2} the segments of $\Gamma(t)$ 
are the shortest saddle connections on $q(t)$.   Now since these segments are all
vertical, given such a segment $\alpha$, the midpoint $p$ of $\alpha$ has the property that the two endpoints of $\alpha$ realize the distance from $p$ to $\Sigma$.  Thus, by construction of 
the Delaunay triangulation (see \S 4 of \cite{MS}), the entire segment $\alpha$ lies in $\Delta(t)$.  This proves (3).

We now prove (4).  By (3), no edge of the triangulation crosses $\Gamma(t)$, so any $2$-cell that intersects a minimal component $Y$ is contained in that minimal component.   By Theorem 4.4 of \cite{MS}, every point in $Y$ is contained in a unique Delaunay cell isometric to a polygon inscribed in a circle of radius $\leq \diam(Y)$.  It therefore remains to bound $\diam(Y)$.  

If $\diam(Y)>2s$, then there is a cylinder $C$ whose height is at least $s$.  As we have seen above, such a cylinder must be contained in $Y$, as it cannot cross $\Gamma(t)$ for sufficiently large $t$.  
But Lemma \ref{lem:short2} gives that the circumference of $C$ is at least $c(t)e^{-t}$, and since $r(t)$ has unit area, the height of $C$ is at most $e^t/c(t)$, and there the diameter is 
at most $(e^t/2c(t))+1$ (the second term coming from a bound on the length of the 
circumference of $C$).
\end{proof}

We now return to the proof that EDM rays are Strebel.  Recall we are arguing by contradiction, 
so that we are assuming the quadratic differential $q_0$ defining the say is not Strebel. 
 Thus there is at least one minimal component in the complement of the critical graph of $q_0$. Let $C_1,\ldots ,C_r$ be the (possibly empty)
collection of vertical cylinders of $q_0$.  

Recall now that for $t$ large enough, by Proposition~\ref{proposition:delaunay}, the critical graph of $q(t)$ are edges of the Delaunay triangulation $\Delta(t)$ of $q(t)$.  Consider $\Delta(t)$ restricted to the complement of the cylinders $C_i$.  

\begin{proposition}
\label{proposition:finite:triang}
Let $(r_0, q_0)$ be given with (possibly empty) cylinder data.  There
exist finitely many triangulations $T_1,\ldots,T_m$ of the complement of the set of cylinders of $(r_0,q_0)$,  with the following property:  for any combinatorial type of triangulation $\Delta$ that appears as the
Delaunay triangulation $\Delta(t_n)$ of  $(r(t_n),q(t_n))$ for a sequence $t_n\to\infty$,
there exists some $T_i$ combinatorially equivalent to $\Delta$ on the complement of the cylinders. 
\end{proposition}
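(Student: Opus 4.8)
The plan is to show directly that only finitely many combinatorial types of triangulation can occur along the ray, by a counting argument based on the Euler characteristic, with Proposition~\ref{proposition:delaunay}(3) as the only substantive input. First I would use that proposition: for $t$ large enough every edge of the vertical critical graph $\Gamma(t)$ is an edge of $\Delta(t)$, so no edge of $\Delta(t)$ crosses $\Gamma(t)$. Since the boundary of each cylinder $C_i$ is a union of vertical saddle connections, i.e.\ a union of edges of $\Gamma(t)$, it follows that for all large $t$ the Delaunay triangulation $\Delta(t)$ restricts to an honest triangulation $\Delta(t)^{\circ}$ of the complement $W:=S\setminus\bigcup_i C_i$ of the cylinders, a triangulation whose vertex set is contained in the zero set $\Sigma$ of $q(t)$ (every zero lies on $\Gamma(t)$, hence in $W$).

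Next I would observe that all of the relevant combinatorial data is bounded independently of $t$. The differential $q(t)$ is the image of $q_0$ under the Teichm\"uller deformation, which in natural coordinates is the affine map $(x,y)\mapsto(e^{t}x,e^{-t}y)$; this carries a $k$-pronged singularity to a $k$-pronged singularity, cylinders to cylinders, and minimal components to minimal components. Hence the topological type of the triple $(S,\Sigma,\{C_i\})$ --- in particular $|\Sigma|$, the homeomorphism type of $W$, and the number of vertical saddle connections composing $\partial W$ --- does not depend on $t$. Now if $\Delta(t)^{\circ}$ has $V$ vertices, $E$ edges and $F$ triangles, then $V-E+F=\chi(W)$ combined with the incidence count $3F=2E_{\mathrm{int}}+E_{\partial}$ (where $E_{\partial}$ is the fixed number of boundary edges, $E=E_{\mathrm{int}}+E_{\partial}$) forces $F=2V-2\chi(W)-E_{\partial}$. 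Since $V\le|\Sigma|$, and $\chi(W)$ and $E_{\partial}$ are fixed, $F$ is bounded above by a constant $N=N(S,q_0)$, and therefore so are $E$ and $V$.

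Finally, a triangulation of the fixed surface-with-boundary $W$ built from at most $N$ triangles, whose corners carry labels in the fixed finite set $\Sigma$ and whose boundary edges sit on the fixed curves $\partial C_i$, is described by a finite amount of combinatorial data: how the at most $3N$ sides of the at most $N$ triangles are glued in pairs (or left on the boundary), and which element of $\Sigma$ occupies each corner. There are only finitely many such gluing-and-labelling patterns, and a fortiori only finitely many combinatorial types of triangulation that are realizable as $\Delta(t)^{\circ}$ for large $t$. Taking $T_1,\dots,T_m$ to be one representative of each type that actually occurs (in particular, of each type occurring along any sequence $t_n\to\infty$) proves the proposition.

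The argument is essentially a finite-combinatorics count, so I do not expect a serious obstacle; the two points that need care are (i) invoking Proposition~\ref{proposition:delaunay}(3) to be sure that $\Delta(t)$ genuinely restricts to a triangulation of $W$ rather than having edges straddling the cylinder boundaries, and (ii) the elementary but essential fact that the Teichm\"uller flow does not alter the combinatorial type of the singular flat structure, which is what makes the Euler-characteristic bound uniform in $t$. The only bookkeeping nuisance is allowing degenerate triangles (two sides of a triangle identified, or a vertex repeated on one triangle); these merely enlarge the finite list of admissible patterns and cause no real difficulty.
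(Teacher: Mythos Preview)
Your argument is correct and follows the same route as the paper: both reduce to the fact that a triangulation with vertex set contained in the fixed finite zero set $\Sigma$ admits only finitely many combinatorial types, and you simply carry out that count more explicitly via the Euler characteristic. The one point the paper makes explicit and you leave implicit is how to realize each $T_i$ geometrically on $(r_0,q_0)$ --- namely as the pullback of $\Delta(t)$ under the Teichm\"uller map $r(0)\to r(t)$ --- which is the form in which the $T_i$ are actually used in the subsequent construction of the fake Teichm\"uller map.
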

\proof  For any such $\Delta$, choose $t_1\geq 0$ to be the smallest time for which 
$\Delta(t_1)$ appears in its combinatorial equivalence class.  Now let $T_1$ be the 
pullback of $\Delta(t_1)$ by the Teichmuller map $f:r(0)\to r(t_1)$.  We remark that $T_1$ is not necessarily Delaunay with respect to the flat structure given by $q(0)$.  

We now do this for each new combinatorial class that appears along $r(t)$.  There are only finitely many such $T_i$ since there 
are only finitely many combinatorial types of triangulations with a fixed number of vertices and edges.  
\endproof

\bigskip
\noindent
{\bf Step 2 (Building the fake Teichm\"{u}ller map): }
Given $r(t)$, we will build a very efficient map $\psi$ from some $r(0)$ to
$r(t)$.  We first need the following lemma about Euclidean triangles.

\begin{lemma}[Euclidean triangle lemma]
\label{lemma:triangle}
Fix a triangle $T_0$ in the Euclidean plane. Then there is a constant $b$, depending only on $T_0$,  
with the following property: for any other
Euclidean triangle $T$  whose  shortest side has  length  at least $\epsilon$, such that  
each side has length at most $R$, and which can be
inscribed in a circle of radius $R$,  there is an affine map from
$T_0$ to $T$ which has quasiconformal dilatation at most $bR/\epsilon$.
\end{lemma}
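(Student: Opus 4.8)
The plan is to take for the affine map the one sending the vertices of $T_0$ to the vertices of $T$ in a fixed cyclic order (chosen so as to be orientation preserving), and to bound the quasiconformal dilatation of its linear part directly. Fix one vertex $A_0$ of $T_0$ and let $u_1,u_2$ be the two edge vectors of $T_0$ emanating from $A_0$; set $U=[u_1\, u_2]$, a fixed invertible $2\times 2$ matrix depending only on $T_0$. Writing $w_1,w_2$ for the corresponding edge vectors of $T$ and $W=[w_1\, w_2]$, the linear part of our affine map is $M=WU^{-1}$, and its quasiconformal dilatation is $K(M)=\sigma_1(M)/\sigma_2(M)=\sigma_1(M)^2/|\det M|$, where $\sigma_1\ge\sigma_2>0$ are the singular values of $M$. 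Since $\det M=\det W/\det U$ with $|\det W|=2\Area(T)$ and $|\det U|=2\Area(T_0)$, this gives $K(M)=\sigma_1(M)^2\,\Area(T_0)/\Area(T)$, so the whole problem reduces to estimating $\sigma_1(M)^2/\Area(T)$.

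For the numerator, the key point — and the reason the exponent in $bR/\epsilon$ is $1$ and not $3$ — is to bound $\sigma_1(M)=\|M\|$ not by $R$ but by the \emph{longest side $c_{\max}$ of $T$}: by submultiplicativity of the operator norm and $\|W\|\le\|W\|_{F}=\sqrt{|w_1|^2+|w_2|^2}\le\sqrt2\,c_{\max}$, one gets $\|M\|\le\|W\|\,\|U^{-1}\|\le\sqrt2\,c_{\max}\,\|U^{-1}\|$, with $\|U^{-1}\|$ depending only on $T_0$. Hence $K(M)\le 2\|U^{-1}\|^2\Area(T_0)\cdot c_{\max}^2/\Area(T)$.

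It then remains to show $c_{\max}^2/\Area(T)\le 8R/\epsilon$. Here I would use the circumscribed circle: $T$ is inscribed in a circle of radius $R$, i.e.\ its circumscribed circle has radius $\rho\le R$, and by the law of sines the sides are $a=2\rho\sin\alpha\le b=2\rho\sin\beta\le c_{\max}=2\rho\sin\gamma$, where $\alpha\le\beta\le\gamma$ are the opposite angles and $\alpha+\beta+\gamma=\pi$. Using $\Area(T)=abc_{\max}/(4\rho)$ the circumradius cancels exactly: $c_{\max}^2/\Area(T)=2\sin\gamma/(\sin\alpha\sin\beta)$. Since $\beta\le\pi/2$ and $\alpha\le\beta$ we have $\sin\gamma=\sin(\alpha+\beta)\le\sin\alpha+\sin\beta\le 2\sin\beta$, so $c_{\max}^2/\Area(T)\le 4/\sin\alpha$; and since the shortest side satisfies $a=2\rho\sin\alpha\ge\epsilon$ we get $\sin\alpha\ge\epsilon/(2\rho)\ge\epsilon/(2R)$. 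Combining, $K(M)\le 16\|U^{-1}\|^2\Area(T_0)\cdot R/\epsilon$, so we may take $b=16\|U^{-1}\|^2\Area(T_0)$, which depends only on $T_0$.

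The only genuine obstacle is the one just flagged: a careless estimate — bounding $\sigma_1(M)\le(\text{const})\cdot R$ and $\Area(T)\ge abc/(4R)\ge\epsilon^3/(4R)$ — yields merely $K(M)=O((R/\epsilon)^3)$. Getting the sharp linear bound forces one to (i) measure the expansion of $M$ against the actual diameter-scale $c_{\max}$ of $T$ rather than against $R$, and (ii) use the exact area–circumradius identity so that $\rho$ cancels and only the angle $\alpha$ opposite the shortest side — which is the one controlled by $\epsilon$ and $R$ — survives. One should also record the trivial points that $T$ is nondegenerate (so $\rho<\infty$ and $\Area(T)>0$) and that the vertex labeling may be chosen so that $M$ is orientation preserving, otherwise $\det M$ is replaced by $|\det M|$ throughout.
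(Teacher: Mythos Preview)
Your proof is correct, and it reaches the same conclusion with the same linear dependence on $R/\epsilon$, but by a genuinely different route than the paper.

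Both arguments hinge on the law of sines / inscribed-angle fact that the angle $\alpha$ opposite the shortest side satisfies $\sin\alpha\ge\epsilon/(2R)$. From there they diverge. The paper drops the altitude from the vertex opposite the longest side, splitting $T$ into two right triangles, and shows each has base-to-height ratio at most $\cot\alpha\le 1/\sin\alpha\le 2R/\epsilon$; doubling each right triangle gives a rectangle of modulus $\le 2R/\epsilon$, from which the dilatation bound for the affine map to a standard right triangle is read off. Your approach instead works directly with the linear part $M=WU^{-1}$ of the single affine map $T_0\to T$: you bound $\sigma_1(M)$ by the actual diameter scale $c_{\max}$ of $T$ (not by $R$), and then use the exact identity $\Area(T)=abc_{\max}/(4\rho)$ so that the circumradius cancels and only $\sin\alpha$ survives. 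The two key inequalities $\sin\gamma\le 2\sin\beta$ and $\beta\le\pi/2$ (from $\alpha\le\beta\le\gamma$) are clean and correct.

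What your approach buys is a single global affine map with an explicit constant $b=16\|U^{-1}\|^2\Area(T_0)$, and a self-contained linear-algebra computation; the paper's approach is more geometric but ends somewhat elliptically (it produces a piecewise-affine map through two standard right triangles and leaves the passage to a single affine map from the given $T_0$ implicit). Your closing remarks about the $O((R/\epsilon)^3)$ trap, nondegeneracy, and orientation are all apt.
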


\proof
Let $p_i, i=1,2,3$ be the vertices of $T$ on the circle arranged in counterclockwise order and assume $\bar{p_1p_2}$ is the shortest side with length 
$a_1\geq \epsilon$, $\bar{p_1,p_3}$ is the longest side with length $a_3\leq 2R$.  
Let $p_0$ the center of the circle. 
Let $\theta$ the angle at $p_3$ of the $T$.  
We claim that $$\sin(\theta)=\frac{a_1}{2R}\geq \frac{\epsilon}{2R}.$$
The first case is that the segment $\overline {p_1p_3}$ separates $p_0$ from $p_2$.
Let  $\psi_1$ be the angle at $p_0$ of the isoceles triangle with vertices at $p_0,p_1,p_2$.
Let $\psi_2$ the angle at $p_0$ of the isoceles triangle with vertices $p_0,p_2,p_3$. 
Let $\psi$ the angle at $p_3$ of the isoceles triangle with vertices $p_0,p_1,p_3$.  
Since this triangle is isoceles, we have $$2\psi=(\pi-(\psi_1+\psi_2)).$$
Since the triangle with vertices at $p_0,p_2,p_3$ is isoceles, 
we have $$2(\psi+\theta)=\pi-\psi_2.$$
Subtracting we get $$\theta=\psi_1/2,$$
proving the claim. A similar analysis holds if $\overline {p_1p_3}$ does not separate $p_0$ from $p_2$. 

Similarly we have $\theta'$,  the angle of $T$ at $p_1$, is given by $$\sin(\theta')=\frac{a_2}{2R}\geq \sin(\theta),$$
where $a_2$ is length of the side  $\bar{p_2p_3}$. 
Now let $h$ be the height of the triangle $T$ with vertex $p_2$ and opposite side length $a_3$.
It divides $T$ into a pair of triangles $T_1,T_2$ with bases $x_1,x_2$ along $\overline{p_3p_1}$ and angles $\theta,\theta'$. Since $\theta'\geq\theta$ 
we have $$x_2/h\leq x_1/h=\cot (\theta)\leq 1/\sin(\theta)\leq 2R/\epsilon.$$ 
Thus if we double the triangles along the hypotenuse we find their moduli are bounded by $2R/\epsilon$ and so the affine map to a standard isoceles right triangle has dilatation bounded in terms of $R/\epsilon$. 
\endproof

\begin{proposition}
For $t$ sufficiently large, there exists a map $\psi:r(0)\to r(t)$ which is
at most an  $e^{2t}$-quasiconformal map and which is not the Teichm\"{u}ller map.
\end{proposition}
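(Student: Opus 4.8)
The plan is to construct $\psi$ by hand, piece by piece, making it agree with the Teichm\"{u}ller map $f\colon r(0)\to r(t)$ on the flat cylinders of $q_0$ but doing something \emph{strictly} more efficient on the minimal components. Recall that since $r$ is EDM we have arranged $r$ to be a globally isometric embedding of $[0,\infty)$ into $\M(S)$, so $d_{\Teich(S)}(r(0),r(t))=t$ and hence the Teichm\"{u}ller map $f\colon r(0)\to r(t)$ has constant pointwise quasiconformal dilatation $e^{2t}$; likewise $f$ carries each vertical cylinder $C_j$ of $q_0$ by the straight affine stretch onto the corresponding cylinder of $q(t)$, whose modulus has grown by the factor $e^{2t}$, and there too the pointwise dilatation is identically $e^{2t}$. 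By Proposition~\ref{proposition:finite:triang} I would first pass to a subsequence $t_n\to\infty$ along which all Delaunay triangulations $\Delta(t_n)$ are combinatorially equivalent, on the complement of the cylinders, to a single fixed straight triangulation $T_i$ of $r(0)$; then fix one such $t=t_n$.

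Next I would define $\psi$ as follows. On each cylinder $C_j$ of $q_0$ set $\psi=f$, the straight stretch onto the corresponding cylinder of $q(t)$. On the complement of the cylinders, fix a combinatorial equivalence between $T_i$ and $\Delta(t)$ and, on each triangle $\tau$ of $T_i$ (a Euclidean triangle in the $q_0$-metric), let $\psi$ be the affine map onto the corresponding triangle $\tau'$ of $\Delta(t)$ (a Euclidean triangle in the $q(t)$-metric) determined by the vertex correspondence. Two such affine maps restrict on a shared edge to the affine parametrization with the same endpoints, so they agree there; and since the boundary of each cylinder is a fixed union of vertical saddle connections lying, by Proposition~\ref{proposition:delaunay}(3), in the $1$-skeleton of $\Delta(t)$, the affine pieces along $\partial C_j$ agree with $f$ there. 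Hence the pieces glue to a homeomorphism $\psi\colon r(0)\to r(t)$, which is affine on each triangle and an affine stretch on each cylinder, so it is quasiconformal away from the finite graph $T_i^{(1)}\cup\bigcup_j\partial C_j$, and therefore quasiconformal on all of $r(0)$, with $K(\psi)$ the supremum of the pointwise dilatations of the pieces.

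The crux is the dilatation estimate on the minimal components. By Proposition~\ref{proposition:delaunay}(3), for $t$ large no edge of $\Delta(t)$ crosses the critical graph $\Gamma(t)$, so the interior of every triangle of $\Delta(t)$ lying in the complement of the cylinders is contained in a single minimal component $Y$. For such a target triangle $\tau'$, Proposition~\ref{proposition:delaunay}(4) bounds its circumradius by $R\le e^t/c(t)$, while Lemma~\ref{lem:short2} bounds each of its sides below by $\epsilon:=c(t)e^{-t}$, each side being a saddle connection with endpoints in $\bar Y\cap\Sigma$ and interior in $Y$. The source triangle $\tau$ ranges over the finite set of triangles of the finitely many $T_i$, so Lemma~\ref{lemma:triangle} applies with a single constant $b$: the affine map $\tau\to\tau'$ has dilatation at most $bR/\epsilon\le b\,e^{2t}/c(t)^2$. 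Since $c(t)\to\infty$, this is $<e^{2t}$ for all $t$ large. Combined with the identical value $e^{2t}$ on the cylinders, this gives $K(\psi)\le e^{2t}$, so $\psi$ is at most $e^{2t}$-quasiconformal.

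Finally, $\psi$ is not the Teichm\"{u}ller map, because its pointwise dilatation is not constant: the hypothesis that $q_0$ is not Strebel guarantees a minimal component $Y$, on whose triangles the pointwise dilatation of $\psi$ is at most $b\,e^{2t}/c(t)^2<e^{2t}$, whereas on any cylinder, if $q_0$ has one, it equals $e^{2t}$ (and if $q_0$ has no cylinders the target Delaunay triangles still vary in shape, so the pointwise dilatation varies). Since a Teichm\"{u}ller map has constant pointwise dilatation, $\psi$ is not one. I expect the main obstacle to be exactly the interplay just used: it is the combination of the lower bound $c(t)e^{-t}$ on the shortest saddle connection (Lemma~\ref{lem:short2}) with the upper bound $e^t/c(t)$ on the circumradius (Proposition~\ref{proposition:delaunay}(4)) that forces $R/\epsilon$ to be $o(e^{2t})$, so that the fake map strictly beats $e^{2t}$ on the minimal components while only tying it on the cylinders; the combinatorial and gluing bookkeeping needed to see that the affine pieces assemble into an honest homeomorphism — in particular the compatibility along the cylinder boundaries — is routine but must be carried out with some care.
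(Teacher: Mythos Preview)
Your approach is essentially the paper's: define $\psi$ to equal the Teichm\"uller stretch on the cylinders and to be piecewise affine on the minimal components via the combinatorial equivalence $T_i\leftrightarrow\Delta(t)$, then bound the triangle dilatations using Lemma~\ref{lemma:triangle} together with the circumradius bound $R\le e^t/c(t)$ from Proposition~\ref{proposition:delaunay}(4).

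There is one genuine slip in your estimate. You assert that every side of a target triangle $\tau'\subset Y$ is a saddle connection with interior in $Y$, and hence has length $\ge c(t)e^{-t}$ by Lemma~\ref{lem:short2}. This is false for triangles abutting the critical graph: when $q_0$ has at least one cylinder, the boundary of the minimal component $Y$ is contained in $\Gamma(t)$, and by Proposition~\ref{proposition:delaunay}(3) those vertical saddle connections are edges of $\Delta(t)$. Such an edge has interior in $\Gamma(t)$, not in $Y$, so Lemma~\ref{lem:short2} does not apply to it; its $q(t)$-length is exactly $e^{-t}$ times its fixed $q_0$-length. The correct lower bound is therefore $\epsilon\ge a\,e^{-t}$, where $a>0$ is the length of the shortest edge of the critical graph $\Gamma(0)$ (a fixed constant). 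This is what the paper uses, and it still gives
\[
\frac{bR}{\epsilon}\ \le\ \frac{b\,e^{2t}}{a\,c(t)}\ <\ e^{2t}
\]
for $t$ large, so your conclusion survives with the weaker (but correct) bound. Your claimed $b\,e^{2t}/c(t)^2$ is simply too optimistic.

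A minor second point: there is no need to pass to a subsequence $t_n$. Proposition~\ref{proposition:finite:triang} furnishes finitely many triangulations $T_1,\dots,T_m$, and for \emph{every} sufficiently large $t$ the Delaunay triangulation $\Delta(t)$ matches one of them; the constant $b$ is then uniform over this finite list. This yields the proposition as stated, for all large $t$, rather than along a subsequence.
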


\proof
For any $t$ sufficiently large,  choose $T_i$ such that the $(r(0),q(0))$ triangulation $T_i$ 
described in Proposition \ref{proposition:finite:triang} is 
combinatorially equivalent to the Delaunay triangulation $\Delta(t)$ on 
$r(t)$, say via a homeomorphism $h:r(0)\to r(t)$.

Let $a$ be the length of the shortest vertical saddle connection of
$(r(0),q(0))$.  We now build the map $\psi: r(0)\to r(t)$.  On each vertical
cylinder, $\psi$ will be the linear map of least quasiconformal
dilatation, which is $e^{2t}$.  Notice this is the map that agrees with the
Teichmuller map $f:r(0)\to r(t)$ on the cylinder. Extend the map to the
obvious linear map on the critical graph $\Gamma(0)$.  We are left with having to
define $\psi$ on the nonempty collection of complementary minimal components of $r(0)\setminus \Gamma(0)$.  

The homeomorphism $h$ gives a bijective mapping between the set of
triangles of $T_i$ and those of $\Delta(t)$.  For each triangle $P$ of $T_i$, each of $P$ and $h(P)$ has 
a given Euclidean structure.  Define $\psi$ to be the unique affine map wihich identifies edges in 
the same combinatorial way as $h$ does.

Let $a$ be the length of the shortest edge in the critical graph $\Gamma$.
By property (4) of Proposition \ref{proposition:delaunay} applied to $\Delta(t)$, we can apply
Lemma \ref{lemma:triangle} with 
$\epsilon\geq ae^{-t}$ and $R=e^{t}/c(t)$ 
to conclude that on the union of the 
interiors of the triangles which are not in any vertical cylinder, the
pointwise quasiconformal dilatation is at most $$\frac{\displaystyle e^tb}{\displaystyle ac(t)e^{-t}}
=\frac{\displaystyle be^{2t}}{\displaystyle ac(t)}.$$
Note that since there are only finitely many $T_i$, the constant $b$ is universal.

Since $c(t)\to \infty$, this number can be taken to be smaller than
$e^{2t}$.  Note that with quasiconformal maps we only need to check
dilatation on a set of full measure, since quasiconformal dilatation is
an $L^\infty$ norm.


Thus the (global) dilatation $K(\psi)$, as a supremum of the dilatation
over all points on the surface,  equals $e^{2t}$, but $\psi$ is not the
Teichm\"{u}ller map since the dilatation is not constant. 
Namely, it is strictly smaller than $e^{2t}$ for any point in the 
minimal component. 
\endproof

\bigskip
\noindent
{\bf Step 3 (The trick): }
Since $\psi$ is not the Teichm\"{u}ller map, there is a Teichm\"{u}ller
map $\Phi:r(0)\to r(t)$ in the same homotopy class of $\psi$, with
dilatation strictly smaller than that of $\psi$, which is $e^{2t}$.  Hence
the distance in moduli space from $r(0)$ to $r(t)$ is strictly less than
$\frac{1}{2}\log e^{2t}=t$, and we are done.

\subsection{ADM rays}

Our goal in this subsection is to prove the following.

\begin{theorem}
A Teichmuller geodesic $r(t)$ determined by $(X_0,q_0)$ is ADM 
if and only if it is mixed Strebel.
\end{theorem}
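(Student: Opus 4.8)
\noindent The plan is to prove the two implications separately, in each case leaning on material already in place.

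\medskip\noindent\emph{Mixed Strebel $\Rightarrow$ ADM.} The idea is to find a single $\Mod(S)$-invariant quantity --- hence a genuine function on $\M(S)$ --- that is forced to decay exponentially along a mixed Strebel ray. Let $E(X):=\inf_\gamma\Ext_X(\gamma)$, the infimum over isotopy classes of simple closed curves $\gamma$. On a fixed surface only finitely many isotopy classes have extremal length below a given bound, so $E(X)>0$; and $\Ext_{\phi X}(\gamma)=\Ext_X(\phi^{-1}\gamma)$ shows $E$ is $\Mod(S)$-invariant, hence descends to $\M(S)$. The first step is the Lipschitz bound
$$d_\M(x,y)\ \geq\ \tfrac{1}{2}\,\bigl|\log E(x)-\log E(y)\bigr|,$$
obtained by applying Kerckhoff's formula (Theorem~\ref{theorem:kerckhoff:formula}) to the $\Teich(S)$-geodesic that realizes $d_\M(x,y)$ and testing it against a curve that nearly achieves $E$ on the far endpoint. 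Now let $C_1$ be a cylinder of $q_0$ with core curve $\alpha_1$ and modulus $M_1=b_1(0)/a_1(0)$. On $r(t)$ the cylinder $C_1$ has modulus $e^{2t}M_1$, so the geometric definition of extremal length gives $E(r(t))\leq\Ext_{r(t)}(\alpha_1)\leq e^{-2t}/M_1$. Plugging this into the displayed inequality yields $d_\M(r(t),r(0))\geq t-c$ for a constant $c$ and all large $t$, and a routine use of the triangle inequality converts this into constants $t_0,C\geq 0$ with $d_\M(r(t),r(t_0))\geq (t-t_0)-C$ for every $t\geq t_0$; that is, $r$ is ADM.

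\medskip\noindent\emph{Not mixed Strebel $\Rightarrow$ not ADM.} If $q_0$ has no cylinders then every component of $r(0)\setminus\Gamma(q_0)$ is a minimal component, and there is at least one. I would then rerun Step~1 and Step~2 of \S\ref{section:edm:implies:strebel} essentially verbatim: Propositions~\ref{proposition:delaunay} and \ref{proposition:finite:triang}, together with the Euclidean triangle lemma (Lemma~\ref{lemma:triangle}), produce for each sufficiently large $t$ a quasiconformal map $\psi:r(0)\to r(t)$ that is affine on each triangle of a fixed Delaunay-type triangulation. The crucial point is that, \emph{since $q_0$ has no cylinders} (a property inherited by every $q(t)$, since these foliations differ only by rescaling the measures), every such triangle lies in a minimal component, so property~(4) of Proposition~\ref{proposition:delaunay} applies to all of them and the pointwise dilatation of $\psi$ is at most $b\,e^{2t}/(a\,c(t))$ almost everywhere, with $c(t)\to\infty$ --- there is no $e^{2t}$-dilatation cylinder part left to dominate. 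Hence
$$d_\M(r(0),r(t))\ \leq\ \tfrac{1}{2}\log K(\psi)\ \leq\ t-\tfrac{1}{2}\log c(t)+O(1),$$
so $d_\M(r(0),r(t))-t\to-\infty$. Feeding this through $d_\M(r(t_0),r(t))\leq d_\M(r(t_0),r(0))+d_\M(r(0),r(t))$ shows that for every $t_0$ and every $C$ there is some $t\geq t_0$ with $d_\M(r(t_0),r(t))<(t-t_0)-C$, so $r$ is not ADM.

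\medskip\noindent The only genuinely new ingredient is the one-line systole inequality; the substance of the converse is borrowed wholesale from the Delaunay-triangulation / fake-Teichm\"uller-map package of \S\ref{section:edm:implies:strebel}. Accordingly the ``main obstacle'' is the mildly non-obvious observation that, once the cylinder part is absent, that same construction upgrades the merely qualitative inequality $d_\M(r(0),r(t))<t$ to the quantitative estimate above, together with the routine triangle-inequality bookkeeping needed to move between $d_\M(r(t),r(0))$ and $d_\M(r(t),r(t_0))$.
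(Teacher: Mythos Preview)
Your argument is correct, and the forward direction (mixed Strebel $\Rightarrow$ ADM) is essentially the paper's own proof: the paper applies Kerckhoff's formula directly to $\phi(r(0))$ and $r(t)$ using the infimum $b=\inf_\alpha\Ext_{X_0}(\alpha)$ and the cylinder bound $\Ext_{r(t)}(\beta)\leq e^{-2t}/M$, which is exactly your systole inequality unpacked.

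The converse, however, takes a genuinely different route from the paper. The paper argues by contradiction: if $r$ is ADM it must leave every compact set, so for large $t$ there is a nonempty family of hyperbolically short curves $\beta_j(t)$; Minsky's estimates from \cite{Mi2} give $\Ext_{r(t)}(\beta_j(t))\geq \delta'/t$, and then Minsky's product theorem \cite{Mi1} produces $\phi\in\Mod(S)$ with $d_{\Teich}(X_0,\phi(r(t)))\leq \log t + O(1)$, contradicting ADM. Your approach instead recycles the Delaunay / fake-Teichm\"uller-map construction of \S\ref{section:edm:implies:strebel}: when $q_0$ has no vertical cylinders, the $e^{2t}$ cylinder contribution to $K(\psi)$ simply vanishes, and the triangle estimate $be^{2t}/(ac(t))$ becomes the global bound. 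This is more self-contained---it avoids importing the two Minsky theorems entirely---and yields $d_\M(r(0),r(t))\leq t-\tfrac{1}{2}\log c(t)+O(1)$ directly, though with the implicit $c(t)$ from Lemma~\ref{lem:short2} in place of the paper's explicit logarithmic rate. The paper's route, on the other hand, makes the thick--thin geometry along the ray more transparent and connects the ADM failure to the short-curve structure, which is thematically useful elsewhere.
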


\begin{proof}
Suppose $(X_0,q_0)$ is mixed Strebel. Let $C$ be a cylinder with
modulus $M$ in the homotopy class of some $\beta$.   Let 
$$b:=\inf \{\Ext_{X_0}(\alpha): \mbox{$\alpha$ is a simple closed curve}\}>0$$ 

On $r(t)$ the image of $C$ has modulus $e^{2t}M$.  By the geometric definition
of extremal length, the extremal length of $\beta$ on $r(t)$ is at most
$e^{-2t}/M$.  By Kerckhoff's distance formula (Theorem \ref{theorem:kerckhoff:formula} above), for any $\phi\in\Mod(S)$, 
$$d_{\Teich(S)}(\phi(r(0)),r(t))\geq \frac{1}{2}
\log\frac{Mb}{e^{-2t}}=t+\frac{1}{2}\log M+\frac{1}{2}\log b.$$ We have thus proved with $C=-\frac{1}{2}\log M-\frac{1}{2}\log b$ that mixed Strebel implies ADM.

Now assume that $r(t)$ is ADM.  We need to show that  $(X_0,q_0)$ is mixed Strebel.  We argue by 
contradiction: assume that $q_0$ has no vertical cylinder.

Since $r(t)$ is ADM, it cannot return to any compact set in $\M(S)$ for arbitrarily large times.  
Therefore, for sufficiently large $t$, there is a nonempty maximal collection 
$\beta_1(t),\ldots,\beta_n(t)$ of simple closed curves whose hyperbolic length is less than some 
fixed $\epsilon$, the Margulis constant.  
%

We have $$|\beta_j(t)|\geq e^{-t}
|\beta_j(t)|^{\rm vert}_0\geq ce^{-t}|\beta_j(t)|_0,$$  for some fixed $c>0$.  By Theorems 4.5 and 4.6 of \cite{Mi2}, since by assumption $(X_0,q_0)$ has 
no vertical cylinder, we have that for some fixed $\delta>0,\delta'>0$:
\begin{equation}
\label{eq:b2}
\Ext_{r(t)} (\beta_j(t))\geq \frac{\delta}{-\log |\beta_j(t)|_t}\geq\frac{\delta}{t-\log(|\beta_j(t)|_{0})}\geq \frac{\delta'}{t},
\end{equation} for $t$ sufficiently large. 

By a theorem of Maskit (see \cite{Mas}), 
the ratio of the hyperbolic length of $\beta_j(t)$ to its extremal length tends to $1$ as $t\to\infty$, so we can assume that the hyperbolic lengths of $\beta_j$ satisfy the same lower bounds.

Now fix a collection of uniformly bounded length curves $\gamma_1,\ldots, \gamma_n$ on $X_0$ combinatorially equivalent to the collection of $\beta_i(t)$, which means that the is an element $\phi(t)$ of the mapping class group taking the $\beta_i(t)$ to the $\gamma_i$. 
Since the curves in any complementary component $Y$  of the $\beta_i$ have length bounded below, we can further choose $\phi$ on $Y$  so that for  any curve of
$\phi(Y)$ the extremal lengths on $\phi(r(t))$ and  $X_0$ have bounded ratio.

By moving  a bounded Teichmuller distance we can shorten the $\gamma_i$ so that they have fixed length $\epsilon$.   We can now apply the Minsky product theorem (see \cite{Mi1})
to find constants $C_1,C_2$ such that  
that $$d_{\Teich(S)}(X_0,\phi(r(t)))\leq
\max_j\{\frac{1}{2}\log \frac{C_1}{\Ext_{r(t)}(\beta_j(t))}\}+C_2$$ which by
 (\ref{eq:b2}) is at most $$\log t-\log \delta'+\log
C_1+C_2$$
for $t$ sufficiently large.  Thus $r(t)$ is not almost length
minimizing.
\end{proof}

\subsection{Asymptote classes of EDM rays}
\label{section:asymptote}

We say that two rays $r,r'$ are {\em asymptotic} if there is a choice of basepoints $r(0), r'(0)$ so that 
$\lim_{t\to\infty}d(r(t),r'(t))\to 0$.  In this section we 
determine the asymptote classes of EDM rays.  We will then use these rays 
in Section \ref{section:dm} to compactify $\M(S)$.

\begin{definition}[Endpoint of a ray]
Let $(X,q)$ be a Strebel differential  with maximal cylinders $C_1,\ldots, C_p$, determining a ray 
$r:[0,\infty)\to\Teich(S)$. Cut each $C_i$ along a circle and glue into each side of the cut an infinite cylinder. 
The resulting surface with punctures $\hat X$ is the {\em endpoint} of $r$, denoted $r(\infty)$. It carries a quadratic differential $q(\infty)$ with double poles at the punctures, with  equal residues,  such that the vertical trajectories are closed leaves isotopic to the punctures. 

The surface $\hat{X}$ can be considered as an element of the product of Teichmuller  spaces of its connected components.  We denote this moduli space, or product of moduli spaces, which we endow with the 
$\sup$ metric, by $\Teich(\hat{X})$.
\end{definition}

We note that $\hat X$ and $q(\infty)$ do not depend on where $C_i$ is cut.   The following definition is due to Kerckhoff \cite{Ke}.

\begin{definition}[Modularly equivalent differentials]
Suppose that $(X,q),(X',q')$ are Strebel differentials with maximal cylinders  $C_1, C_2,\ldots , C_p$ and $C_1',\ldots,  C_r'$ respectively.  We say that these differentials are {\em modularly equivalent} if 
each of the following holds:
\begin{enumerate}
\item $p=r$.  
\item After reindexing, up to the action of there is an element $\Mod(S)$,  for each $i$, $\phi(C_i)$ is homotopic to $C_i'$.
\item There exists $\lambda>0$ so that $Mod(C_i)=\lambda Mod(C_i')$ for each $i$.
\end{enumerate}
\end{definition}

Suppose a pair of rays $r,r'$ are modularly equivalent. Since the moduli change by a fixed factor along rays, we can choose basepoints $r(0),r'(0)$ so that the cylinders have the same moduli at the basepoints,  and  define 
$$d(r,r')=\lim_{t\to\infty} d_{\M(S)}(r(t),r'(t))$$ if the limit exists.

\begin{theorem}
\label{thm:distance}
With the notation as above, suppose that $r$ and $r'$ are modularly equivalent.  Then 
$d(r,r')$ exists and $d(r,r')=d_{\M(\hat{X})}(r(\infty),r'(\infty))$.  
\end{theorem}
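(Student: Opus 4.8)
The inequality $d(r,r')\le d_{\M(\hat X)}(r(\infty),r'(\infty))$ should follow by constructing, for each large $t$, an explicit quasiconformal map from $r(t)$ to $r'(t)$ whose dilatation is controlled by the data at infinity. The idea is to take a near-optimal map between the endpoint surfaces $\hat X$ and $\hat X'$ (realizing $d_{\M(\hat X)}(r(\infty),r'(\infty))$ up to $\epsilon$), and modify it near the punctures: on $r(t)$ the cylinders $C_i$ and $C_i'$ are very long (height $e^t b_i(0)$) and, by modular equivalence, after our choice of basepoints they have equal moduli, hence there is an affine map between them of dilatation $1$; one glues these long-cylinder affine maps to the near-optimal map on the complement, whose support is pushed further and further toward the cores of the cylinders as $t\to\infty$. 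Since a quasiconformal map supported off a region of small extremal-length-width contributes negligibly (one estimates using Lemma~\ref{lem:short1}, which gives $\Ext_{r(t)}(\alpha_i)\sim e^{-2t}/M_i(0)$, so the cylinders eat up essentially all the surface), the dilatation of the glued map tends to that of the endpoint map. This gives $\limsup_t d_{\M(S)}(r(t),r'(t))\le d_{\M(\hat X)}(r(\infty),r'(\infty))+\epsilon$.

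For the reverse inequality $d(r,r')\ge d_{\M(\hat X)}(r(\infty),r'(\infty))$, I would use Kerckhoff's formula (Theorem~\ref{theorem:kerckhoff:formula}) together with the extremal length estimates. By Lemma~\ref{lem:short1}, the only curves on $r(t)$ with bounded extremal length are (isotopic to) the core curves $\alpha_i$ or curves disjoint from all the $\alpha_i$; curves crossing a cylinder have extremal length growing like $e^{2t}$. So in the sup defining $d_{\Teich}(r(t),r'(t))$ (taken over lifts), the ratio $\Ext_{r(t)}(\gamma)/\Ext_{r'(t)}(\gamma)$ is, up to $o(1)$, controlled by how $\gamma$ sits in the complementary pieces $Y$, i.e. by the extremal lengths on the endpoint surface $\hat X$. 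One then identifies $\lim_t \tfrac12\log\sup_\gamma \Ext_{r(t)}(\gamma)/\Ext_{r'(t)}(\gamma)$ with the analogous sup on $\hat X$, which by Kerckhoff's formula applied to the (product of) Teichm\"uller space(s) $\Teich(\hat X)$ is exactly $d_{\M(\hat X)}(r(\infty),r'(\infty))$. Here one also uses that the modular equivalence lets us match up the $\alpha_i$ under a mapping class once and for all, so the minimization over $\Mod(S)$ reduces to minimization over the mapping class groups of the pieces of $\hat X$.

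The existence of the limit $d(r,r')$ is then automatic: the two inequalities pin $\lim_t d_{\M(S)}(r(t),r'(t))$ between two quantities that agree, so both $\limsup$ and $\liminf$ equal $d_{\M(\hat X)}(r(\infty),r'(\infty))$. Alternatively one can argue monotonicity or a Cauchy-type estimate directly, but the sandwich seems cleanest.

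**Main obstacle.** I expect the hard part to be making the upper-bound construction precise — specifically, controlling the dilatation of the glued map near the boundary circles where the long-cylinder affine map is grafted onto the near-optimal endpoint map. One needs a clean interpolation lemma saying that a $K$-quasiconformal map of an endpoint surface, cut along circles in each infinite cylinder at $q(\infty)$-distance $R$ from the core, can be modified inside the annular collar between that circle and a fixed reference circle to become affine on the remaining half-infinite cylinder, at the cost of a dilatation that is $K\cdot(1+\eta(R))$ with $\eta(R)\to 0$ as $R\to\infty$; this is the same kind of "collar interpolation" used in the proof that Strebel rays are EDM (Step~2 of \S\ref{section:edm:implies:strebel}, via Lemma~\ref{lemma:triangle}-type estimates). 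Matching the combinatorics of the cylinder decompositions across $r$ and $r'$ — which is exactly what modular equivalence is engineered to provide — is what keeps this bookkeeping finite and uniform in $t$.
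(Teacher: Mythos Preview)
Your upper-bound strategy is essentially the paper's: take a near-optimal map $\hat X\to\hat X'$, arrange it to be conformal near the punctures, and glue across the long cylinders. The paper formalizes the two ingredients you flag as the obstacle via two dedicated lemmas: Lemma~\ref{lem:punctures}, which perturbs the Teichm\"uller map $\hat X\to\hat X'$ to be genuinely conformal (not just near-optimal) in punctured discs, and Lemma~\ref{lem:close}, an explicit collar-interpolation lemma on Euclidean cylinders that controls the dilatation of the glued map in terms of the hyperbolic distance between the corresponding points in $\hyp^2$. (Your reference to Lemma~\ref{lemma:triangle} is off---that lemma is for the EDM argument; the relevant estimate here is purely about long round annuli, not Delaunay triangles.)

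Your lower-bound route, however, is genuinely different from the paper's. The paper does not use Kerckhoff's formula at all for this direction; it dispatches the inequality $d_{\M(\hat X)}(r(\infty),r'(\infty))\le\liminf_t d_{\M(S)}(r(t),r'(t))$ in one line by a compactness argument: a sequence of $(e^{2\liminf}+o(1))$-quasiconformal maps $r(t_n)\to r'(t_n)$ is a normal family, and since $r(t_n)\to r(\infty)$ and $r'(t_n)\to r'(\infty)$ geometrically, a subsequential limit furnishes a $e^{2\liminf}$-qc map $r(\infty)\to r'(\infty)$. This is softer and shorter than what you propose. Your Kerckhoff-based argument is plausible but would need two ingredients not supplied by Lemma~\ref{lem:short1}: first, that $\Ext_{r(t)}(\gamma)\to\Ext_{\hat X}(\gamma)$ for each curve $\gamma$ disjoint from the $\alpha_i$ (Lemma~\ref{lem:short1}(2) only gives a lower bound, not convergence); second, some uniformity allowing you to exchange $\liminf_t$ with $\inf_{\phi\in\Mod(S)}$, since even after reducing to $\phi$ that fix the $\alpha_i$ setwise you are still minimizing over an infinite group. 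Both can be handled, but the normal-families shortcut avoids them entirely.
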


Assuming Theorem \ref{thm:distance} for the moment, we have the following.

\begin{corollary}
\label{cor:asym}
Two rays $r,r'$ are asymptotic if and only if they are modularly equivalent and they have the same endpoints $r(\infty)=r'(\infty)$.
\end{corollary}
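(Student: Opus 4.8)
\medskip\noindent
The plan is to prove the two implications separately, with Theorem~\ref{thm:distance} doing most of the heavy lifting; the only genuine work is establishing modular equivalence in the forward direction. Since ``modularly equivalent'' is defined only for Strebel rays, I take $r=r_{(X,q)}$ and $r'=r_{(X',q')}$ to be EDM rays, equivalently Strebel by Theorem~\ref{theorem:rays}, with cylinder decompositions $C_1,\dots,C_p$ (core curves $\alpha_i$, moduli $M_i$ at the basepoint) and $C_1',\dots,C_{p'}'$ (core curves $\alpha_j'$, moduli $M_j'$), and I fix lifts $\tilde r,\tilde r'$ to $\Teich(S)$.

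For the ``if'' direction I would simply quote Theorem~\ref{thm:distance}: if $r,r'$ are modularly equivalent with $r(\infty)=r'(\infty)$ then, taking basepoints at which corresponding cylinders have equal moduli, $d(r,r')=\lim_{t\to\infty}d_{\M(S)}(r(t),r'(t))$ exists and equals $d_{\M(\hat X)}(r(\infty),r'(\infty))=0$; since $d_{\M(\hat X)}$ is a genuine metric the limit is $0$, so $r$ and $r'$ are asymptotic.

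For the ``only if'' direction, suppose $r,r'$ are asymptotic; after shifting basepoints I may assume $\epsilon_t:=d_{\M(S)}(r(t),r'(t))\to 0$. First, by proper discontinuity of the $\Mod(S)$-action I pick $\phi_t\in\Mod(S)$ with $d_{\Teich(S)}(\tilde r(t),\phi_t\tilde r'(t))=\epsilon_t$; Kerckhoff's formula (Theorem~\ref{theorem:kerckhoff:formula}), used in both orders, then gives
\[
e^{-2\epsilon_t}\ \le\ \frac{\Ext_{\tilde r(t)}(\gamma)}{\Ext_{\tilde r'(t)}(\phi_t^{-1}\gamma)}\ \le\ e^{2\epsilon_t}
\]
for every simple closed curve $\gamma$. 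The crucial input is Lemma~\ref{lem:short1}, which pins down the ``short-curve set'': there are $\delta>0$ and $t_0$ so that for $t\ge t_0$ one has $\{\gamma:\Ext_{\tilde r(t)}(\gamma)<\delta\}=\{\alpha_1,\dots,\alpha_p\}$ and $\{\gamma:\Ext_{\tilde r'(t)}(\gamma)<\delta\}=\{\alpha_1',\dots,\alpha_{p'}'\}$, since parts (2) and (3) bound all other curves away from $0$ (uniformly in $\gamma$), while part (1) forces each core curve in. Because $\epsilon_t\to 0$, the displayed comparison then forces $\phi_t^{-1}$ to send $\{\alpha_i\}$ into a slightly dilated short-curve set of $\tilde r'(t)$, which for large $t$ is still $\{\alpha_j'\}$; by the symmetric argument $\phi_t^{-1}$ restricts, for large $t$, to a bijection $\{\alpha_i\}\to\{\alpha_j'\}$. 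Hence $p=p'$, and passing to a subsequence $t_n\to\infty$ on which the induced permutation is constant and reindexing accordingly, $\phi_{t_n}^{-1}\alpha_i=\alpha_i'$ for all $i$ --- giving conditions (1) and (2) of modular equivalence with $\phi=\phi_{t_1}^{-1}$. For condition (3) I would feed the first-order asymptotics $\Ext_{\tilde r(t)}(\alpha_i)\sim e^{-2t}/M_i$ and $\Ext_{\tilde r'(t)}(\alpha_i')\sim e^{-2t}/M_i'$ from Lemma~\ref{lem:short1}(1) into the comparison along $t_n$ and let $n\to\infty$: since $e^{\pm 2\epsilon_{t_n}}\to 1$ this yields $M_i=M_i'$ for every $i$, which at the original basepoints says that the moduli $\mathrm{mod}(C_i)$ and $\mathrm{mod}(C_i')$ are proportional with ratio independent of $i$. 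So $r,r'$ are modularly equivalent. Finally, the basepoints now in force are precisely the equal-moduli basepoints of the definition of $d(r,r')$, so $d(r,r')=\lim_t d_{\M(S)}(r(t),r'(t))=0$; Theorem~\ref{thm:distance} then gives $d_{\M(\hat X)}(r(\infty),r'(\infty))=d(r,r')=0$, whence $r(\infty)=r'(\infty)$.

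The hard part is the modular-equivalence half of the ``only if'' direction, the obstacle being that the matching mapping classes $\phi_t$ a priori depend on $t$. What makes it go through is the combinatorial rigidity of the short-curve set of a Strebel ray: for large $t$ it is intrinsically determined --- it is exactly the set of cylinder core curves --- so each $\phi_t$ must respect it, and finiteness of the symmetric group on $p$ letters then stabilizes the combinatorics along a subsequence, which is all one needs since modular equivalence only asserts the existence of a single mapping class. Once the cylinders are matched, proportionality of moduli is forced by the first-order precision of Lemma~\ref{lem:short1}(1), and ``same endpoint'' falls straight out of Theorem~\ref{thm:distance}.
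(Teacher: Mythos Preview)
Your proof is correct, and the ``if'' direction together with the modular-equivalence portion of the ``only if'' direction match the paper's approach exactly (the paper merely says modular equivalence ``follows immediately from (1) of Lemma~\ref{lem:short1} and Kerckhoff's distance formula,'' whereas you spell out the short-curve-set matching in detail).

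The one genuine difference is how you obtain $r(\infty)=r'(\infty)$. The paper does this \emph{first}, by a direct compactness argument: the $(1+o(1))$-quasiconformal maps $f_n:r(n)\to r'(n)$ form a normal family, and a subsequence converges to a conformal map $r(\infty)\to r'(\infty)$. You instead establish modular equivalence first, observe that your shifted basepoints are already equal-moduli basepoints, and then invoke Theorem~\ref{thm:distance} to conclude $d_{\M(\hat X)}(r(\infty),r'(\infty))=d(r,r')=0$. Your route avoids the normal-families machinery at the cost of leaning on the full strength of Theorem~\ref{thm:distance} (whose proof is considerably harder than the normal-families step); the paper's route is more self-contained here but leaves the modular-equivalence verification to the reader. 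Either order works.

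One small point worth tightening: you assert that Lemma~\ref{lem:short1}(2) gives the lower bound ``uniformly in $\gamma$,'' but as stated the threshold time in part (2) may depend on $\beta$. The claim that $\{\gamma:\Ext_{\tilde r(t)}(\gamma)<\delta\}=\{\alpha_1,\dots,\alpha_p\}$ for large $t$ is nonetheless true --- it follows, for instance, from convergence $r(t)\to r(\infty)$ in $\DM$, or from a collar-lemma argument showing any short curve must be disjoint from the $\alpha_i$ and then live on a fixed compact piece --- but you should either justify the uniformity or cite the convergence to the endpoint.
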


This corollary was proven by Kerckhoff \cite{Ke} in the case of a maximal collection of cylinders.

\begin{proof}[of Corollary \ref{cor:asym}]
The ``if'' direction follows immediately from Theorem \ref{thm:distance}.  For the ``only if'' direction, 
we first note that the hypothesis implies that for each $n$ sufficiently large, there is a sequence 
of  $(1+o(1))$-quasiconformal maps $f_n:r(n)\to r'(n)$.  Since uniformly quasiconformal maps form a normal family (see, e.g., \cite{Hu}, Theorem 4.4.1) and $r(n),r'(n)$ converge to $r(\infty),r'(\infty)$, 
there is a subsequence of $\{f_n\}$ which converges to 
a conformal map $f_\infty:r(\infty)\
\to r'(\infty)$, so that $r(\infty)=r'(\infty)$.  Modular equivalence of $r$ and $r'$ follows immediately from (1) of Lemma \ref{lem:short1} and Kerckhoff's distance formula (Theorem \ref{theorem:kerckhoff:formula}).  
\end{proof}

We now begin the proof of Theorem \ref{thm:distance}.  

\begin{proof}
We first note that, exactly as in the proof of Corollary \ref{cor:asym}, we have $$d_{\M(\hat{X})}(r(\infty),r'(\infty))\leq\liminf_{t\to\infty} d_{\M(S)}(r(t),r'(t)).$$ 

To prove the opposite inequality we first need the following lemma.

\begin{lemma}
\label{lem:close}
Suppose $\epsilon>0$ is given.   Let $C_1,C_2$ be  Euclidean cylinders with heights $R_1,R_2$ and circumference $1$. Now in coordinates $(x,y)$ in the upper half-space model $\hyp^2$ of the hyperbolic plane, given any $n\in\Z$ we let $z_1=(0,R_1)$ and 
$z_2=(n,R_2)$ be points in $\hyp^2$.  Let 
$$d_0:=d_{\hyp^2}(z_1,z_2).$$  Let $p_1,q_1$ marked points on the boundary of $C_1$ assumed to be at 
$(0,0)$ and $(0,R_1)$.  Let $p_2,q_2$ marked points on the boundary of $C_2$ at $(0,0)$ and $(\alpha,R_2)$ in polar coordinates $(\theta,h)$ on $C_2$.
 Let $f(\theta)$ be a real analytic function defined from the base $h=0$ of $C_1$ to the base of $C_2$ such that $f(0)=0$ and $$\sup_\theta|f'(\theta)-1|\leq \epsilon.$$   Let  $\gamma_1$ be the vertical line in $C_1$ joining $p_1$ to $q_1$.  Let $\beta$ be the Euclidean geodesic 
 joining $(0,0)$ to $(\alpha,R_2)$ in $C_2$. Let $\gamma_2$ be the local geodesic in the relative homotopy class of $\beta$ twisted $n$ times about the core curve of $C_2$.  Then for $R_1,R_2$ large enough,  there   is a 
 $(1+O(\epsilon))e^{2d_0}$-quasiconformal map $F:C_1\to C_2$ 
such that 
\begin{itemize}
\item $F(\theta,0)=(f(\theta),0)$.
\item  $F(q_1)=q_2$.
\item  $F(\gamma_1)$ is homotopic to $\gamma_2$ relative to the boundary of $C_1$. 
\end{itemize}
\end{lemma}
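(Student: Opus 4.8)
The plan is to build $F$ by hand: an explicit affine shear on the bulk of the cylinder, glued along a fixed-height collar near the base to a correction term that absorbs the deviation of $f$ from the identity. Set up coordinates $(\theta,h)\in(\R/\Z)\times[0,R_1]$ on $C_1$ with the flat metric $d\theta^2+dh^2$, so that $p_1=(0,0)$ and $q_1=(0,R_1)$, and likewise $(\theta,h)\in(\R/\Z)\times[0,R_2]$ on $C_2$ with $p_2=(0,0)$ and $q_2=(\alpha,R_2)$. Let $\beta\equiv\alpha\pmod 1$ be the holonomy of the straight helix from $p_2$ to $q_2$ lying in the relative homotopy class of $\gamma_2$; then $\beta=n+O(1)$, and since the real part $n$ of $z_2$ differs from $\beta$ by a bounded amount, $d_{\hyp^2}(iR_1,\,\beta+iR_2)=d_0+o(1)$ as $R_1,R_2\to\infty$.

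First I would treat the model case $f=\Id$. Here $F$ is the linear shear $\Phi(\theta,h)=\bigl(\theta+\beta h/R_1,\ (R_2/R_1)h\bigr)$, which descends to a homeomorphism $C_1\to C_2$, restricts to the identity on the base, carries $q_1$ to $(\beta,R_2)=q_2$, and sends the vertical arc $\gamma_1=\{0\}\times[0,R_1]$ onto the straight helix from $p_2$ to $q_2$ of holonomy $\beta$ --- exactly the relative homotopy class of $\gamma_2$. Its derivative is the constant matrix $\bigl(\begin{smallmatrix}1&\beta/R_1\\0&R_2/R_1\end{smallmatrix}\bigr)$, and an elementary singular-value computation gives $K(\Phi)+K(\Phi)^{-1}=(R_1^2+\beta^2+R_2^2)/(R_1R_2)$. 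Applying the formula $\cosh d_{\hyp^2}(z,w)=1+|z-w|^2/(2\,\mathrm{Im}\,z\,\mathrm{Im}\,w)$ with $z=iR_1$, $w=\beta+iR_2$ shows the right-hand side is exactly $2\cosh d_{\hyp^2}(iR_1,\beta+iR_2)$, so $K(\Phi)=e^{d_{\hyp^2}(iR_1,\beta+iR_2)}=(1+o(1))e^{d_0}\le(1+o(1))e^{2d_0}$.

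Next I would reinstate the boundary map $f$. Since $R_2$ is large, reserve a collar $(\R/\Z)\times[0,1]$ in each cylinder. As $f$ is real-analytic with $f(0)=0$ and $\sup_\theta|f'-1|\le\epsilon$ (hence $\sup_\theta|f(\theta)-\theta|\le\epsilon$ as well), put $\hat f(\theta,h):=\bigl((1-h)f(\theta)+h\theta,\ h\bigr)$ on the base collar of $C_1$; one checks it is a homeomorphism onto the base collar of $C_2$ with Jacobian $\bigl(\begin{smallmatrix}1+O(\epsilon)&O(\epsilon)\\0&1\end{smallmatrix}\bigr)$, so it is $(1+O(\epsilon))$-quasiconformal, equals $f$ on $h=0$, and equals the identity on $h=1$. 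On the complementary sub-cylinder $(\R/\Z)\times[1,R_1]$ use the shear $\Phi'(\theta,h):=\bigl(\theta+\beta(h-1)/(R_1-1),\ 1+(R_2-1)(h-1)/(R_1-1)\bigr)$ into $(\R/\Z)\times[1,R_2]$; it agrees with $\hat f$ along $h=1$, and the model computation (with $R_i$ replaced by $R_i-1$) bounds its dilatation by $(1+o(1))e^{d_0}$. The map $F$ equal to $\hat f$ on the base collar and to $\Phi'$ elsewhere is then a homeomorphism $C_1\to C_2$ that restricts to $f$ on the base, carries $q_1$ to $q_2$, sends $\gamma_1$ onto an arc (straight up, then the holonomy-$\beta$ helix) homotopic rel endpoints to $\gamma_2$, and has dilatation $\max\{1+O(\epsilon),\,(1+o(1))e^{d_0}\}\le(1+O(\epsilon))e^{2d_0}$ once $R_1,R_2$ are large enough to absorb the $o(1)$ errors into the $O(\epsilon)$ term.

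The main obstacle is the bookkeeping needed to control the constant, and it has two parts. First, one must confirm that the collar correction for $f$ has pointwise dilatation $1+O(\epsilon)$ with implied constant independent of $n$, $R_1$, $R_2$; this is exactly where the hypothesis that $R_1,R_2$ be large enters, since it makes the fixed-height collars negligible in modulus next to the whole cylinder, so that splitting $F$ as ``collar correction $+$ affine shear'' costs only a $(1+O(\epsilon))$ factor beyond $e^{d_0}$. Second, one must verify that the shear carries $\gamma_1$ to precisely the relative homotopy class of $\gamma_2$ --- that the winding number is the prescribed $n$ (up to the fixed $O(1)$ convention built into the choice of $\beta$) and is unaffected by the collar correction, which, being supported in an embedded disc, cannot change it. Neither point is deep, but both require care, since it is the sharpness of the exponent in $e^{2d_0}$ (in fact $e^{d_0}$) that feeds Theorem~\ref{thm:distance}.
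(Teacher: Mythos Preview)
Your argument is correct and essentially the same as the paper's: both construct an explicit map whose Jacobian is $O(\epsilon)$-close to the linear shear $\bigl(\begin{smallmatrix}1&n/R_1\\0&R_2/R_1\end{smallmatrix}\bigr)$ and then identify the dilatation of that shear with the hyperbolic distance between the associated marked tori. The only cosmetic difference is that the paper uses a single interpolation $F(\theta,h)=\bigl((1-h/R_1)f(\theta)+(h/R_1)(\theta+\alpha+n),\,hR_2/R_1\bigr)$ across the whole cylinder rather than your collar-plus-shear decomposition, which spares the gluing step but yields the same Jacobian estimate.
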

\begin{proof}
Define $F=(F_1,F_2)$ by $$F(\theta,h)=((1-\frac{h}{R_1})f(\theta)+\frac{h(\theta+\alpha+n)}{R_1},\frac{hR_2}{R_1});$$
the first coordinate taken modulo $1$.  We have $F(\theta,0)=(f(\theta),0)$ and 
$F(0,R_1)=(\alpha,R_2)$ and 
$F(\gamma_1)=\gamma_2$. 
We compute 
$$\partial F_1/\partial \theta=\frac{h}{R_1}(1-f'(\theta))+f'(\theta)$$
$$\partial F_2/\partial h= \frac{R_2}{R_1}$$ $$\partial F_1/\partial h=\frac{1}{R_1}(\theta+\alpha+n-f(\theta))$$ and $$\partial F_2/\partial \theta=0.$$
So we have 
$$|\partial F_1/\partial \theta-1|<2\epsilon$$ and 
for $R_1$ sufficiently large we have $$|\partial F_1/\partial h-n/R_1|\leq \epsilon.$$ 
Thus 
$$
|\text{Jac}(F)-\left(
\begin{array}{ll}
1&n/R_1\\
0&R_2/R_1
\end{array}\right)|=O(\epsilon),
$$
where Jac stands for Jacobian.  Note that the above linear map is the Teichmuller map taking the marked torus spanned by $\{(1,0),(0,R_1)\}$ to the marked torus spanned by 
$\{(1,0), (n,R_2)\}$. These tori correspond to the given points in $\hyp^2$ and therefore the dilatation of the linear map is precisely $e^{2d_0}$, as claimed.  \end{proof}

\medskip
\noindent

We also need the following lemma.  

\begin{lemma}
\label{lem:punctures}
Let $g:\hat X\to\hat X'$ a Teichmuller map with dilatation $K_0$. Given $\epsilon>0$, there is a 
$(K_0+\epsilon)$-quasiconformal map $f:\hat X\to\hat X'$ which is conformal in a neighborhood of 
of the punctures.
\end{lemma}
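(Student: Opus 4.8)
The plan is to localize the modification of $g$ near each puncture of $\hat X$ and interpolate between a conformal map on a small punctured disk and the given Teichm\"{u}ller map $g$ on an annulus surrounding it. First I would fix a puncture $p$ of $\hat X$ with image puncture $p'=g(p)$ of $\hat X'$. Since $g$ is a Teichm\"{u}ller map with dilatation $K_0$, in the natural coordinates given by the quadratic differentials $q(\infty)$ on $\hat X$ and $q(\infty)'$ on $\hat X'$ — which near their punctures look like $dz^2/z^2$ up to a constant, i.e. flat semi-infinite cylinders — the map $g$ is affine. Pass to the universal cover of a punctured-disk neighborhood of $p$, so that the cylinder coordinate is $w = x+iy$ with $y$ large corresponding to approaching $p$; in these coordinates $g$ has the form $w\mapsto a x + (1/a) i y + (\text{const})$ (possibly composed with a rotation/reflection), where $a = \sqrt{K_0}$.

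The key step is the interpolation. Choose a large height $Y_0$ and, for $y \geq Y_0$, replace $g$ by the conformal (in fact Euclidean-isometry) map $w \mapsto w + c$ that has the correct rotational part and matches $g$'s first-coordinate (circumferential) behavior on the core circle $\{y=Y_0+1\}$ after unwrapping; for $y \le Y_0$ keep $g$; and on the collar $Y_0 \le y \le Y_0+1$ linearly interpolate between the affine map $g$ and this isometry, exactly as in the first-coordinate interpolation of Lemma~\ref{lem:close}. A direct Jacobian computation, identical in spirit to the one in the proof of Lemma~\ref{lem:close}, shows that on the interpolating collar the pointwise dilatation is a convex-combination-type interpolation between $K_0$ and $1$, hence bounded by $K_0$, while the vertical stretching contributes a term of size $O(1/Y_0)$ from the $\partial F_1/\partial h$ entry (the ``twist'' term $\theta + \alpha + n - f(\theta)$ there plays the role of the bounded circumferential displacement here, divided by the large height $Y_0$). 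Choosing $Y_0$ large enough makes this error term smaller than $\epsilon$, so the modified map has dilatation at most $K_0 + \epsilon$ on the collar, dilatation $1$ on the conformal neighborhood $\{y > Y_0+1\}$, and dilatation $K_0$ elsewhere; thus its global dilatation is at most $K_0+\epsilon$. Doing this simultaneously and independently at each of the finitely many punctures, shrinking the neighborhoods so they are pairwise disjoint, yields the desired map $f$, which is conformal in a neighborhood of every puncture.

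The main obstacle I anticipate is a bookkeeping one rather than a conceptual one: one must be careful that the conformal model near the puncture, the isometry used for $y \ge Y_0+1$, and the original Teichm\"{u}ller map $g$ all agree as homeomorphisms of the surface (in particular respecting the markings and the rotational/reflectional part of $g$ near $p$), so that the interpolation genuinely glues to give a well-defined homeomorphism $f:\hat X\to\hat X'$ homotopic to $g$ rel punctures. This amounts to the same careful tracking of boundary identifications already carried out in Lemma~\ref{lem:close}, so once that compatibility is arranged the dilatation estimate is the routine Jacobian computation indicated above.
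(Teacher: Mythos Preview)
There is a genuine gap in your setup. You assert that the Teichm\"uller map $g$ is affine in the ``natural coordinates given by the quadratic differentials $q(\infty)$ on $\hat X$ and $q(\infty)'$ on $\hat X'$,'' but those are \emph{not} the initial and terminal differentials of $g$. The Teichm\"uller map between $\hat X$ and $\hat X'$ carries its own pair of holomorphic quadratic differentials, and for punctured surfaces these must be integrable, hence have at most \emph{simple} poles at the punctures. Near a simple pole the flat model is a cone of angle $\pi$ (equivalently a Euclidean half-plane with the puncture at the origin), not a semi-infinite Euclidean cylinder; in the conformal cylinder coordinate $w=x+iy$ the map $g$ is therefore not of the form $(x,y)\mapsto(ax,y/a)$ and there is no reason for it to be affine, or even smooth, there. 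Your interpolation estimate (``convex-combination-type interpolation between $K_0$ and $1$, hence bounded by $K_0$'') rests entirely on that affine form and collapses without it: linearly interpolating a general $K_0$-quasiconformal map with a Euclidean isometry on a collar of height $1$ can produce dilatation far larger than $K_0$. Indeed, even the cruder consistency check fails: if $g$ really were the affine map of half-infinite cylinders associated to $q(\infty)$ and $q(\infty)'$, its dilatation would be determined by the ratio of the residues $a_i/b_i$, which has nothing to do with $d_{\M(\hat X)}(\hat X,\hat X')$.

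The paper avoids this local analysis entirely. It truncates the Beltrami coefficient $\mu$ of $g$ to zero on small punctured disks, solves the Beltrami equation for the truncated $\mu_t$ to obtain a $K_0$-quasiconformal map $f_t:\hat X\to\hat X_t$ that is genuinely conformal near the punctures, observes that $\hat X_t\to\hat X'$ in Teichm\"uller space as the disks shrink, and then post-composes with a $(1+\epsilon)$-quasiconformal correction $h_t:\hat X_t\to\hat X'$ whose Beltrami coefficient is supported away from the punctures. The point is that the modification is made at the level of Beltrami coefficients, where cutting off is trivial, and the price (landing on the wrong surface $\hat X_t$) is paid by a small global correction rather than by a local interpolation whose dilatation one would have to control by hand.
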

\begin{proof}
Let $\mu$ be the dilatation of  $g$. 
For any small neighborhood $0<|z|<|t|$ of the punctures, let $\mu_t$ be the Beltrami differential which is $0$ in $0<|z|<|t|$ and $\mu$ in the complement.
For some surface $\hat{X}_t$, there is a $K_0$-quasiconformal map $f_t:\hat X\to \hat{X}_t$ with dilatation $\mu_t$; in particular $f_t$ is conformal in $0<|z|<|t|$.   As $t\to 0$, $\mu_t\to \mu$ and therefore  
$\lim_{t\to 0}f_t=g$ and so $\lim_{t\to 0}\hat{X}_t= \hat X'$.   Choose a nonempty open set $\V$ on $\hat{X}$.  We can find a collection of  Beltrami differentials supported in $\V$ that form a basis for the tangent space to $\Teich$ at $\hat{X'}$.  This implies that  for $t$ small enough we can find a $(1+\epsilon)$-quasiconformal map $h_t:\hat{X}_t\to \hat X'$
which is conformal in a neighborhood of the punctures.
 Our desired map is $f=h_t\circ f_t$.
\end{proof}

Now  we begin the proof of the bound $$\limsup_{t\to\infty}d_{\M(S)}(r(t),r'(t))\leq d_{\M(\hat{X})}(r(\infty),r'(\infty)).$$
Let $p_i,q_i, i=1,\ldots, p$   be the paired punctures on $r(\infty)$, and let 
 $z_i$ be the coordinate at $p_i$  so that for some $a_i>0$, $$q(\infty)=\frac{a_i^2}{z_i^2}dz_i^2,$$ we have a similar coordinate in a neighborhood of $q_i$. 
Let $\zeta_i$ the corresponding coordinate for $q'(\infty)$ on $r'(\infty)$
in a neighborhood of $p_i'$ so that $$q'(\infty)=\frac{b_i^2}{\zeta^2_i}d\zeta_i^2.$$  
 Circles in these coordinates are  vertical leaves for $q(\infty)$ and $q'(\infty)$ 
and have lengths $2\pi a_i$ and $2\pi b_i$ respectively. 
For some $\delta_j(t)$ we recover the surfaces along $r(t)$ by removing  punctured discs of radius 
$\delta_i^{1/2}(t)$ around  $p_i$ and $q_i$ 
and glueing the resulting surfaces along their boundary. 
We have $$\lim_{t\to 0}\delta_j(t)=0.$$
We have a similar picture for $r'$ with corresponding $\delta_i'^{1/2}(t)$.  The assumption  that $r,r'$ are modularly equivalent means that for each $\delta_i$ there is $\delta_i'$, such that  the resulting cylinders $A_i,A_i'$ on $r(t),r'(t)$ have the same modulus. 
For convenience we drop the subscript $i$.

Let $K=e^{d_{\M(\hat{X})}(r(\infty),r'(\infty))}$. 
Given $\epsilon$,  let  $F_2:r'(\infty)\to r(\infty)$ be the $(K+\epsilon)$-quasiconformal map given by Lemma~\ref{lem:punctures} that is  conformal in a neighorhood of all of the  punctures. 
We may take a fixed $\kappa'$ so that $F_2$ is conformal inside the circle of radius $\kappa'$ inside each punctured disc.   This means that we can take $\zeta$ as a conformal coordinate in a neighborhood of the puncture on 
$r(\infty)$ and so the map $F_2$ is the identity on the circle
$|\zeta|=\kappa'$ in these coordinates.

Consider the annulus  $B'\subset A'$ defined by 
$$B'=\{\zeta:|\delta'^{1/2}|<|\zeta|<\kappa'\}.$$ 
Consider also the annulus $B\subset r(\infty)$ which in the $z$ plane is bounded by the circle of radius $|\delta^{1/2}|$
and the curve  $\omega$ which is the image under $F_2$ of the circle of radius $\kappa'$. 
In the $\zeta$ coordinates on $r(\infty)$,  $B$ is bounded by the circle $|\zeta|=\kappa'$ and an analytic curve  $\gamma$ 
which is the image under the holomorphic change of coordinate map  $\zeta=\zeta(z)$  of the circle  of radius $|\delta^{1/2}|$.   

Since $\kappa'$ is fixed, we have $$\lim_{\delta'\to 0}\frac{\text{Mod}(B')}{\text{Mod}(A')}=1$$ and since $\omega$ is fixed, $$\lim_{\delta\to 0}\frac{\text{Mod}(B)}{\text{Mod}(A)}=1.$$ Since $\text{Mod}(A)=\text{Mod}(A')$ we therefore have 
 \begin{equation}
\label{eqn:mod}
\lim_{\delta\to 0}\frac {\text{Mod}(B')}{\text{Mod}(B)}\to 1.
\end{equation}

For small enough $\delta$ we
 wish to find a $(1+O(\epsilon))$ quasiconformal map $F_1$ from $B'$ to $B$ such that 
\begin{itemize}
\item for $\zeta=\delta'^{1/2} e^{i\theta}, z=F_1(\zeta)=\delta^{1/2} e^{i\theta}$
\item for $|\zeta|=\kappa'$,  $F_1(\zeta)=\zeta$. 
\end{itemize}

In other words, the desired $F_1$ is the identity on the circle of radius $\kappa'$ and takes the circle of radius $\delta'^{1/2}$ in the $\zeta$ coordinates to the circle of radius $\delta^{1/2}$ in the $z$ coordinates. 
We also find a corresponding map $F_1$ for  neighborhoods of the punctures $q_i,q_i'$.  We then will glue these maps $F_1$ along the circle of radius $\delta'^{1/2}$  together to give a $1+O(\epsilon)$ quasiconformal map, again denoted $F_1$,  on the glued annulus to the annulus found by gluing along the circle of radius $\delta^{1/2}$ in the $z$ coordinates.  We then glue $F_1$ to $F_2$ along the circles of radius $\kappa'$ to give a 
$(K+O(\epsilon))$-quasiconformal map from $r'(t)$ to $r(t)$.

We now find the map $F_1$.
By (\ref{eqn:mod}) for all sufficiently small $\delta$, $$|\frac{\text{Mod}(B')}{\text{Mod}(B)}-1|\leq\epsilon/2$$
Find a conformal map $h_\delta(z)$ from $B$ to a round annulus 
$$B_1=\{w:\delta''^{1/2}<|w|<\kappa'\}$$ with the normalization that  
 $h_\delta(\kappa')=\kappa'$.     
The composition $$\zeta=\delta'^{1/2}e^{i\theta}\to z=\delta^{1/2}e^{i\theta}\to h_\delta(z)$$ is a map 
$w=f_\delta(\zeta)$ from  the circle of radius $\delta'^{1/2}$ in the $\zeta$ plane to the circle of radius $\delta''^{1/2}$ in the $w$ plane.  Similarly  we have a  map $w=g_\delta(\zeta)$ from  the circle of radius $\kappa'$ in the $\zeta$-plane to the circle of radius $\kappa'$ in the $w$-plane.  These two maps can be thought of as boundary maps of $B'$ to $B_1$.

We wish to show that, as $\delta\to 0$,  we have $|f_\delta'(\zeta)-(\delta^{''}/\delta')^{1/2}|\to 0$ and $|g_\delta'(\zeta)-1|\to 0$.  In that case after mapping the annuli $B_1,B'$ to flat cylinders with base $0$, circumference $1$  and heights $R_1,R_2$ respectively, by a logarithm map,  the induced maps on the top and bottom of the cylinders have derivatives
 almost constantly $1$. Since the ratio of moduli has limit $1$,  we then can apply Lemma~\ref{lem:close} with $R_1/R_2\to 1$ and $n=0$.

We now show the desired above limits hold. 
Considering  $B$ as an annulus in the $\zeta$ coordinates,  with outer boundary the  fixed circle $|\zeta|=\kappa'$,   
as $\delta\to 0$, the conformal maps  $h_\delta$ converge to a conformal self map of the punctured disc $0<|\zeta|<\kappa'$. 
It extends to a conformal map   taking $0$ to $0$.  The only such conformal maps are rotations.  But by our normalization of the $h_\delta$'s to fix a point, that map must be the identity.  Thus as $\delta\to 0$, the  maps $h_\delta$  converge uniformly to the identity, and therefore $g'_\delta$ converges uniformly to $1$ on the circle of radius $\kappa'$.  

By replacing $z$ with $z/\delta^{1/2}$, and $w$ with $w/\delta''^{1/2}$ we also can consider $h_\delta$  as a map from the annulus $B$ in the $z$ plane with inner boundary the unit circle, to $B_1$, another  annulus with inner boundary the unit circle. 
As $\delta\to 0$, $h_\delta$  converges to a conformal map of the exterior of the unit disc to the exterior of the unit disc, taking $\infty$ to $\infty$.   The limiting conformal map is  therefore again the identity.  Thus the map $h_\delta$ from the circle of radius $\delta^{1/2}$ to the circle of radius $\delta''^{1/2}$ in the $w$ plane has derivative approaching $(\delta''/\delta)^{1/2}$ as $\delta\to 0$. Since the map from the circle of radius $\delta'^{1/2}$
in the $\zeta$ plane to the circle of radius $\delta^{1/2}$ in the $z$ plane has derivative $(\delta/\delta')^{1/2}$, applying the chain rule  
 the composition $f_\delta$ has derivative converging to  $(\delta''/\delta')^{1/2}$ as $\delta\to 0$.  We are now in a position to apply Lemma~\ref{lem:close}.
This completes the proof.  
\end{proof}

\section{The iterated EDM ray space and the Deligne-Mumford compactification}
\label{section:irdm}

In this section we introduce a functor $X\mapsto \bar{X}^{ir}$
defined on a certain collection of metric spaces $X$.
The space $\bar{X}^{ir}$ will be constructed via certain
equivalence classes of EDM rays, and will have the structure of a
metric stratified space (see below).  We will then prove that
this functor applied to $\M(S)$ produces the Deligne-Mumford
compactification $\DM$; that is, we will find a 
stratification-preserving homeomorphism from $\bar{\M(S)}^{ir}$
to the Delgine-Mumford compactification $\DM$ which is an
isometry on each stratum.

\subsection{The iterated EDM ray space}
\label{section:irc}

Before defining $\bar{X}^{ir}$, we will have to deal with a technical issue.  The boundary pieces of $\DM$ are naturally products of smaller moduli spaces.  We will need to canonically pick out the factors in such products by studying uniqueness of product decompositions.  Unfortunately, the 
fact that $\M(S)$ has orbifold points slightly complicates matters, as we will now see.

A metric space $Y$ is said to have the {\em unique local geodesic property} if for every $y\in Y$ there is a neighborhood $U$ of $y$ with the property that
 any two  points in $U$ can be connected by a 
unique geodesic in $U$.  It is well-known that $\Teich(S)$ has the unique local geodesic property.  
It follows easily from the proper discontinuity of the action of $\Mod(S)$ on $\Teich(S)$ that $\M(S)$ has this property in the complement of its orbifold locus.  However, for points $s\in\M(S)$ in the orbifold locus, this is not true: every neighborhood of $s$ in $\M(S)$ has some pair of points $x,y$ so that the number $n(x,y)$ of (globally length minimizing) geodesics from $x$ to $y$ is greater than $1$.  Since there is a uniform bound (of $84(g-1)$) of the order of any group stabilizing any point of $\Teich(S)$, it follows that there is a uniform upper bound for $n(x,y)$ for any $x,y\in\M(S)$.

\begin{theorem}[Uniqueness of product decomposition]
\label{theorem:uniqueness3}
For each $1\leq i\leq m$, let $Y_i$ be a connected metric space, not equal to a point, with the following property: 
\begin{enumerate}
\item The complement of the 
set of points $S_i\subset Y_i$ without the unique local geodesic property is open and dense in $Y_i$, and 
\item there exists $N_i\geq 1$ so that for all $x,y\in Y_i$, 
the number $n_i(x,y)$ of (globally length-minimizing) geodesics in $Y_i$ from $x$ to $y$ is 
at most $N_i$.
\end{enumerate}

Let $Z=Y_1\times Y_2\ldots\times Y_n$, endowed with the $\sup$ metric.  Then given any other way of writing $Z=X_1\times\cdots X_m$ with the $\sup $ metric, it must be that $m=n$ and, after perhaps permuting factors, 
$X_i=Y_i$ for all $i$. 
 \end{theorem}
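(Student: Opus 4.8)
The plan is to show that the product decomposition of $Z$ is rigid by analyzing how geodesics and the failure of the unique local geodesic property interact with the metric. The key structural fact is that in a $\sup$-metric product $Z = A_1 \times \cdots \times A_k$, a path $\gamma(t) = (\gamma_1(t), \ldots, \gamma_k(t))$ is a (globally length-minimizing) geodesic if and only if each $\gamma_i$ is a geodesic in $A_i$ and all of them are parametrized proportionally to arc length at the \emph{same} speed (up to the dominant factor); more precisely, $d_Z(x,y) = \max_i d_{A_i}(x_i, y_i)$, and a geodesic is obtained by choosing, in each factor, a geodesic of length $d_{A_i}(x_i,y_i)$ traversed at the appropriate constant speed. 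Consequently the number $n_Z(x,y)$ of globally minimizing geodesics between $x = (x_i)$ and $y = (y_i)$ is the product $\prod_i n_{A_i}(x_i, y_i)$ of the numbers in the factors that \emph{realize the maximum} $d_{A_i}(x_i,y_i) = d_Z(x,y)$, times a continuum coming from reparametrization freedom in the non-dominant factors — and it is precisely this reparametrization freedom that lets one detect non-dominant factors. In particular, the set $S_Z$ of points of $Z$ without the unique local geodesic property, and more refined local data, is determined by the metric alone.

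First I would reduce to identifying, intrinsically from the metric of $Z$, an equivalence relation on $Z$ whose classes are the "slices" $\{a_1\}\times\cdots\times\{a_{i-1}\}\times Y_i \times \{a_{i+1}\}\times\cdots$. The natural candidate: declare $x \sim_i y$ if $x$ and $y$ differ only in a "one-dimensional family of directions" — but cleaner is to work with the de Rham–type decomposition. Namely, for each point $p$, consider geodesics through $p$ and the way small metric balls split as metric products; the factors $Y_i$ should be recoverable as the maximal factors in a product decomposition of a small ball, and one uses connectedness plus the openness/density of $Y_i \setminus S_i$ to globalize. Concretely: pick $p = (p_1,\ldots,p_n)$ with each $p_i \notin S_i$ (possible by hypothesis (1) and the Baire/openness argument — the product of dense open sets contains points with all coordinates generic). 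Near such $p$, $Z$ has the unique local geodesic property and is locally a metric product of the $Y_i$ near $p_i$; by a local product-rigidity statement for $\sup$ (or CAT, or just geodesic) metrics one recovers the tangent-level splitting, hence each local slice $\{p_j\}_{j\neq i}\times U_i$. Then I would show these local slices patch into the global slices $\{p_j\}_{j\neq i}\times Y_i$: moving $p_i$ within $Y_i$ along a path, the slice direction is transported, and since $Y_i$ is connected and $Y_i\setminus S_i$ is dense, the global slice foliation is well-defined and $X_i$-foliations must coincide with $Y_i$-foliations up to permutation. Finally, restricting the metric to a slice gives an isometric copy of $Y_i$ (resp. $X_i$), and counting dimensions of the foliation / comparing refinements forces $m = n$ and $X_i = Y_i$ after permutation.

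The main obstacle I expect is the local product-rigidity step: showing that if a small metric ball in $Z$ (with the $\sup$ metric, which is \emph{not} strictly convex and \emph{not} Riemannian — the paper explicitly notes the Teichmüller metric is Finsler) is isometric to a $\sup$-product $V_1 \times \cdots \times V_m$ of smaller metric spaces, then this decomposition refines to, or coincides with, the given one $U_1\times\cdots\times U_n$. The usual de Rham decomposition theorem requires a Riemannian (or at least CAT(0) with some regularity) hypothesis, neither of which holds here, so the argument must be done by hand using only the combinatorics of geodesics. The right tool is exactly the geodesic-counting computation above together with the reparametrization-freedom observation: a direction $v$ at $p$ "belongs to factor $i$" can be characterized metrically by how geodesics in direction $v$ can be perturbed while remaining minimizing between their endpoints, and the factors are the maximal subsets of directions closed under this relation. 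I would set up this direction-partition carefully, verify it is metric-intrinsic, check it is respected by both product structures, and conclude that the two partitions of the "coordinate directions" into blocks must agree up to permutation — whence $m=n$ and $X_i = Y_i$. Hypotheses (1) and (2) enter to guarantee that generic points with recoverable local product structure exist and that the geodesic-counting numbers are finite (so the block structure is genuinely combinatorial rather than involving uncountable ambiguity from the factors themselves).
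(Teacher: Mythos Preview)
Your intuition about geodesic counting and reparametrization freedom in non-dominant factors is exactly right, and it is the engine of the paper's proof as well. But your proposal leaves the hard step---local product rigidity for $\sup$ metrics---as an admitted obstacle, and your suggested fix via a de Rham-type ``direction partition'' is vague about what a direction even means in this non-Riemannian, non-strictly-convex setting. That is a genuine gap: what you are proposing to do by hand is essentially the content of a separate theorem.

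The paper's argument is sharper and avoids building the decomposition from scratch. It makes two clean moves. First, it gives a precise metric characterization of the bad set $\bigcup_i (Y_1\times\cdots\times S_i\times\cdots\times Y_m)$: a point $z$ lies in this set if and only if there is an integer $N>1$ such that every neighborhood of $z$ contains a pair $x,y$ with strictly more than one but at most $N$ minimizing geodesics between them. The dichotomy is that non-diagonal pairs always have \emph{infinitely} many geodesics (reparametrization freedom in the non-dominant factors), while diagonal pairs have exactly $\prod_i n_i(x_i,y_i)\le N_1\cdots N_m$ geodesics; so ``finitely many but more than one'' isolates precisely the diagonal pairs hitting some $S_i$. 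This is where hypothesis (2) is actually used, and more sharply than your ``genuinely combinatorial'' remark suggests: the uniform bound is what separates the bad-set signal from the continuum of non-diagonal geodesics. Since this characterization is purely metric, any isometry preserves the complement $(Y_1\setminus S_1)\times\cdots\times(Y_m\setminus S_m)$.

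Second, rather than attempting local product rigidity by hand, the paper invokes a theorem of Malone: any $\sup$-metric product of path-connected, locally uniquely geodesic metric spaces has a unique product decomposition. This is exactly what your direction-partition argument would be trying to establish, and it is not trivial. The paper then extends from the dense open complement back to all of $Z$. If you want to complete your own route, you must either prove Malone's result (your sketch is a plausible start, but needs a rigorous replacement for ``direction''), or reduce to it as the paper does after first isolating the bad set via the finite-versus-infinite geodesic count.
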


As the proof of Theorem \ref{theorem:uniqueness3} is independent of the rest of this paper, we leave it for the Appendix (Section \ref{section:appendix}) below.  One key ingredient is a recent theorem of Malone \cite{Mal}.

As discussed above, $Y_i=\M(S)$ satisfies the hypotheses of Theorem \ref{theorem:uniqueness3}.   In this case the set $S_i$ is precisely the orbifold locus of $\M(S)$.

\bigskip
Now consider a metric space $(X,d)$ with $X=X_1\times\ldots X_m$ ( possibly with $m=1$).  Assume that $(X,d)$ satisfies the hypotheses of Theorem \ref{theorem:uniqueness3}.
We will consider rays in each factor. 

\begin{definition}[Isolated rays]
We say that a ray $r$  is {\em isolated} if the following two properties hold
\begin{enumerate}
\item There is a factor $X_j$ such that $r\subset X_j$ and $r$ is an EDM ray in $X_j$.    
\item For every $p\in X_j$, the set of asymptote classes of EDM rays $[r']\subset X_j$ which are a bounded distance from $r$, and which have some representative passing through $p$, is countable.
\end{enumerate}
\end{definition}

We will now define a space $\bar{X}^{ir}$ inductively, 
building it inductively,  stratum by stratum.  The level $k$ stratum will be 
denoted $D_k(X)$. 

\medskip
Henceforth every metric space $(Y,d)$ that appears as a factor in a product  will be assumed to have the following 
three properties:

\bigskip
\noindent
{\bf Standing Assumption I (Limits exist): } For any two isolated EDM rays $r_1,r_2$ in $Y$ that are a bounded distance apart, 
there are initial points $r_1(0),r_2(0)$ such that 
$\lim_{t\to\infty} d(r_1(t),r_2(t))$ exists and is a minimum among all choices of basepoints.  

\bigskip
\noindent
{\bf Standing Assumption II (Asymptotes are uniformly asymptotic):} For any $\epsilon>0$,   any asymptote class of isolated EDM rays $[r]$,  any representative $r$ of $[r]$, and any choice of  basepoint $r(0)$,
 there is a $T=T(r,r(0), \epsilon)$  such that for any such asymptotic 
 pairs $r,r'$ the rays $r([T,\infty))$ and 
$r'([T',\infty))$ are within Hausdorff  distance $\epsilon$ of each other. 

\bigskip
\noindent
{\bf Standing Assumption III (Almost locally unique geodesics): }$Y$ satisfies the hypotheses (and hence the conclusions) of Theorem \ref{theorem:uniqueness3}

\bigskip
If a metric space  $X$ contains isolated rays, we consider the set $\text{Asy}(X)$ of all asymptote 
classes of isolated EDM rays $[r]$ in $X$.  With Standing Assumption I in hand, 
we can endow $\text{Asy}(X)$ with a distance function via $d_{\text{asy}}([r_1],[r_2])=\lim_{t\to\infty} d(r_1(t),r_2(t))$ for choice of basepoints that minimizes this limit.  It is easy to check that this defines a metric.
 
 
\medskip

\medskip
Let $(D_0(X), d_0):=(X,d)$.  

\medskip
\noindent
{\bf Step 1 (Inductive step): } Suppose we are given the metric space $D_k(X)$, written as a product of factors $X_1\times \ldots \times X_m$ with the metric  $d_k(\cdot,\cdot)$, where $d_k$ is the $\sup$ of the metrics $d^j$ of the factors.  
Remove each factor that is a point. If none of the factors  $X_j$  contains isolated EDM rays, define $D_{m}(X)=\emptyset$ for all $m>k$ and stop the inductive process.   If some factor $X_j$ contains isolated rays then we 
 set $$D_{k+1}^j(X)=X_1\times \ldots \times X_{j-1}\times \text{Asy}(X_j)\times X_{j+1}\times \ldots \times X_m.$$
We can endow $D_{k+1}^j(X)$ with a distance function $d_{k+1}^j$ as the sup metric on the factors.    
 From Standing Assumption III, we have that if $\text{Asy}(X_j)$ is a product, then it can be written uniquely as a product.  Thus, given the product representation of $D_k(X)$, we have a unique product representation of $D_{k+1}^j(X)$.  

Note also that if two points in $D_{k+1}^j(X)$ have an infinite distance from each other, then they are in different components of 
$D_{k+1}^j(X)$.  
We then set $$D_{k+1}(X)=\sqcup_{j=1}^m D_{k+1}^j(X)$$ with metric $d_{k+1}$ which is the corresponding metric $d_{k+1}^j$ on each term in the disjoint union. 
\medskip

\noindent
{\bf Step 2 (Topology): }
We will inductively define a topology on the disjoint union $Y:=\cup_{j=0}^\infty D_j(X)$, as follows.

Using Standing Assumption II,  for every $[r_0]\in \text{Asy}(X_j)$ and every $\epsilon>0$ we can define 
an $\epsilon$-neighborhood  $V_\epsilon([r_0])$ of $[r_0]$ in $\text{
Asy}(X_j)\cup X_j$.   Consider the set of equivalence classes of isolated rays $[r]\in \text{Asy}(X_j)$  such that $d^j([r],[r_0])<\epsilon$  
and set  $V_\epsilon^j([r_0])$ to be the union of the set of such rays  and the following set. 
For each such ray $[r]$ and each $r\in [r]$ include in $V_\epsilon^j([r_0])$  the set 
$\{r(t): t\geq T(r,r(0),\epsilon)\}$. 

We are now ready to define the topology. 
\begin{definition}
\label{df:top1}
Let $j\geq 0$.  Suppose $\vec{x}(n)$ is a sequence in $D_k(X)$ and $(\vec{x},[r])\in D_{k+1}^j(X)$.
   We say $\vec{x}(n)\to (\vec{x},[r])$ if there exists  $t_n\to\infty$ 
such that 
\begin{enumerate}
\item  for $i\neq j$, $\lim_{n\to\infty} d^i(x_i(n),x_i)= 0$ 
\item  $\lim_{n\to\infty} d^j(x_j(n),r(t_n))=0$ for some representative $r$ of $[r]$.
\end{enumerate}
\end{definition}

Now suppose inductively for each $k,m$, and for each sequence $\vec{x}(n)\in D_k(X)$, and $y\in D_{k+m}(X)$ we have defined what it means for $\vec{x}(n)$ to converge to $y$.
\begin{definition}
\label{df:top2}
Suppose $\vec{x}(n)\in D_k(X)$ and $z\in D_{k+m+1}(X)$. We say $\vec{x}(n)\to z$ if there exists $j$, points  $(\vec{x'}(n),[r_n])\in D_{k+1}^j(X)$, a sequence $\epsilon_n\to 0$, representatives $r_n$   and times $t_n$ such that  
\begin{enumerate}
\item $\lim_{n\to\infty} d^i(x_i(n),x'_i(n))=0$ for $i\neq j$.
\item $\lim_{n\to\infty}d^j(x_j(n),r_n(t_n))=0$.
\item $r_n(t_n)\in V_{\epsilon_n}([r_n])$.
\item $\lim_{n\to\infty} (\vec{x'}(n),[r_n])=z$.
\end{enumerate}
\end{definition}
The first condition just says that one has convergence in the factors where one is not considering isolated rays. Notice the last condition inductively makes sense since $(\vec{x'}(n),[r_n])\in D_{k+1}(X)$ and $z\in D_{k+m+1}(X)$ and $k+m+1-(k+1)=m$.

\smallskip We thus obtain a topological space which is stratified
by $\{D_k(X)\}$, and in fact each stratum is a metric space (by Standing Assumption I).  Note that 
$X$ is open and dense in $Y$.  We are actually interested in a somewhat simpler space, obtained as a certain quotient of $Y$, as
follows.

\medskip
\noindent
{\bf Step 3 (Identifications): }  The space $Y$ provides a natural ``boundary'' for $X$, although the 
construction may give multiple copies of the same boundary component.  To remedy this, we will 
identify points that  ``should'' be distance zero from each other.   In some sense this is like 
Cauchy's scheme for completing metric spaces.  

We make no identifications of points in $D_0(X)$. Now suppose inductively we have made identifications of points in $D_j(X)$ for all $j\leq k$ and $P,Q\in D_{k+1}(X)$. 
\begin{definition}
We say $P\sim Q$ if there exist sequences $x_n,y_n$ in the same component of $D_{k-1}(X)$ such that 
\begin{enumerate}
\item $\lim_{n\to\infty} x_n=P$ and $\lim_{n\to\infty} y_n=Q$.
\item $\lim_{n\to\infty}d_{k-1}(x_n,y_n)=0$.
\end{enumerate}
\end{definition}
This is clearly an equivalence relation.  We denote  
 the  quotient space of $Y$ by this equivalence relation  by $\bar{X}^{ir}$, and call it the  {\em iterated EDM ray
space} associated to $X$.  This is evidently a functor from
metric spaces (whose $D_j$'s satisfy the standing assumptions) and isometries
to metric spaces and isometries.  If $Y$ turns out to be 
a compactification of $X$, then 
since we only identified certain points in $Y\setminus X$, it
follows that $\bar{X}^{ir}$ is also a compactification of $X$.

\begin{xample}
For $X$ the upper quadrant in $\R^2=\R^+\times \R^+$ with the $\sup$ metric, $D_1$ has two components, each of which is an infinite ray. A point in one component corresponds to a vertical ray, with the distance 
function equal to the distance function between vertical rays,  i.e. the difference of their $x$ coordinates. 
The points in the other component correspond to horizontal rays, with the distance being the difference of their $y$ coordinates.   Since $D_1$ is a disjoint union of two rays,  $D_2$ consists of two points.  
The sequence $(n,n)$ converges to each of the two points in $D_2$, and so 
these points are identified.  Thus in this case $\bar{X}^{ir}$ is a closed square.
\end{xample}

\subsection{Metric stratified spaces}

We would like to keep track of structures finer than topological
type.  To do so we will need the following standard concept.

\begin{definition} A {\em stratification} of a second countable,
locally compact Hausdorff space $X$ is a locally finite partition
${\mathcal S}_X$ into open sets $S$ satisfying: \begin{enumerate}
\item Each element $S\in {\mathcal S}_X$, called a {\em stratum}, is
a connected topological space in the induced topology.  \item For
any two strata $S_1,S_2\in{\mathcal S}_X$, if $\bar{S_1}\cap
S_2\neq\emptyset$ then $\bar{S_1}\supset S_2$.  \end{enumerate} A
space $X$ with a stratification, with each stratum endowed with
the structure of a metric space, is called a {\em metric
stratified space}.  \end{definition}

Inclusion $\bar{S_1}\supset S_2$ defines a partial ordering
$S_1>S_2$ on the elements of ${\mathcal S}_X$.  The {\em depth}, or
{\em level} of a stratum $T$ is the maximal $n$ so that there is
a chain $$S_0>\cdots >S_n=T$$ with $S_i\in{\mathcal S}_X$.  Note that
since ${\mathcal S}_X$ is locally finite, any such chain is finite,
although {\it a priori} one might have strata of infinite depth.

\begin{xample}
The iterated EDM ray space $\bar{X}^{ir}$ of \S\ref{section:irc} 
has a natural stratification, where the level $k$ strata are the components of $D_k(X)$.
\end{xample}

\subsection{The Deligne-Mumford compactification}
\label{section:dm}

Deligne-Mumford \cite{DM} constructed a compactification $\DM$ of $\M(S)$, called the {\em Deligne-Mumford compactification}, which they proved is a projective variety.  As such, $\DM$ is endowed with 
the structure of a stratified space.  Bers \cite{Be} also gave a construction of $\DM$ as a stratified 
space.  Points of the level $k$ strata of $\DM$ are given by conformal structures on $k$-noded Riemann surfaces; the set of strata are parametrized by the set of combinatorial types of collections 
of nodes (see \cite{Be,DM}).  

The topology on $\DM$ is as follows.  On each stratum the topology is just that of the corresponding moduli space.  Points $X_n$ converge to some $Y$ in a lower level stratum if for every neighborhood $N$ of the union of nodes in $Y$, there is a conformal map $(Y\setminus N)\to X_n$ for $n$ sufficiently large.  We endow each stratum of $\DM$ with the corresponding Teichm\"{u}ller metric, thus giving $\DM$ the structure of a metric stratified space.

Our goal in this section is to reconstruct $\DM$ as a metric
stratified space (but not as a projective variety) as the
iterated EDM ray space $\bar{\M(S)}^{ir}$ associated to $\M(S)$.  We
therefore begin by applying the construction from the previous
subsection to $\M(S)$, endowed with the Teichm\"{u}ller metric.
.  

We characterize the isolated rays in $\M(S)$, 
and identify the metric they give on the stratum $D_1(\M(S))$.

\begin{proposition} 
\label{proposition:dm1} 
Let $S$ be a surface
of finite type.  Then a ray in $\M(S)$ is an isolated EDM ray if
and only if it is a one-cylinder Strebel ray.  Let $r$ and $r'$
be one-cylinder Strebel rays.  Suppose the cylinders of $r$ and $r'$ both have 
core curves of the same topological type as a fixed simple closed curve $\gamma$.  
Then $d_1(r,r')$ in
$D_1(\M(S))$ exists, and coincides with the Teichm\"{u}ller
distance between $r(\infty)$ and $r'(\infty)$ in the boundary
moduli space $\M({S\setminus \gamma})$.  
\end{proposition}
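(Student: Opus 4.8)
The plan is to prove the two assertions of Proposition \ref{proposition:dm1} separately. For the first — that isolated EDM rays are exactly one-cylinder Strebel rays — recall from Theorem \ref{theorem:rays} that EDM rays are precisely Strebel rays, so it suffices to characterize which Strebel rays are isolated. First I would show that a one-cylinder Strebel ray $r$ is isolated: if $q$ has a single cylinder $C$ with core curve $\gamma$, then by Corollary \ref{cor:asym} any EDM ray $r'$ a bounded distance from $r$ is modularly equivalent to $r$ with $r(\infty)=r'(\infty)$; modular equivalence of one-cylinder differentials forces the core curve of $r'$ to be of the same topological type as $\gamma$ (after applying a mapping class) and the endpoint $r'(\infty)\in\M(S\setminus\gamma)$ to be fixed, so given a point $p\in\M(S)$ the rays through $p$ in such an asymptote class are parametrized by the (countably many) choices of a simple closed curve on $S$ of the type of $\gamma$ together with a marking. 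Conversely, if $q$ has at least two cylinders $C_1,C_2$, then one can produce a one-(real-)parameter family of Strebel differentials a bounded distance from $r$ by applying partial Dehn-twist-type deformations supported on one cylinder while keeping the moduli ratio and the endpoint fixed (using Corollary \ref{cor:asym} to see these are all a bounded distance apart), giving an uncountable asymptote set through a fixed point, so $r$ is not isolated. The main obstacle here is being careful about exactly which deformations of a multi-cylinder Strebel differential stay a bounded distance from $r$ — this is where Corollary \ref{cor:asym} (modular equivalence plus equal endpoints) must be invoked precisely, and one must confirm that the twist parameter in one cylinder genuinely varies the point in a fixed fiber continuously.

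For the second assertion, that $d_1(r,r')=d_{\M(S\setminus\gamma)}(r(\infty),r'(\infty))$ when the two cylinders have core curves of the type of $\gamma$, the key point is simply to specialize Theorem \ref{thm:distance}. Since $r$ and $r'$ are one-cylinder Strebel rays whose cylinders are of the same topological type, they are automatically modularly equivalent provided we choose basepoints so that the two cylinders have equal modulus (condition (1) $p=r=1$ is automatic; condition (2) holds because $\gamma$ and the $r'$-core are related by a mapping class; condition (3) is arranged by choice of basepoint, which is legitimate since moduli scale by $e^{2t}$ along the rays). Theorem \ref{thm:distance} then gives that $d(r,r')$ exists and equals $d_{\M(\hat X)}(r(\infty),r'(\infty))$. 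It remains only to identify the endpoint moduli space: $\hat X$ is the surface obtained by cutting the single cylinder $C$ and gluing in half-infinite cylinders, i.e. $S$ cut along $\gamma$ with punctures at the two new boundary circles, so $\Teich(\hat X)=\Teich(S\setminus\gamma)$ and hence $\M(\hat X)=\M(S\setminus\gamma)$; under this identification $q(\infty)$ has double poles at the two punctures. This is the routine part.

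I expect the genuine work to be concentrated in the first assertion, specifically in establishing non-isolation for multi-cylinder Strebel rays: one needs to exhibit an honest continuum of pairwise-bounded-distance EDM rays all passing through a common point. The natural construction is to vary the gluing (twisting) parameter along one cylinder $C_1$ while freezing everything else; Corollary \ref{cor:asym} tells us these rays are mutually asymptotic up to the modular-equivalence bookkeeping, so in particular pairwise a bounded distance apart, and the twist parameter sweeps out an uncountable set in any fiber over the relevant boundary moduli space point. Conversely, for the one-cylinder case the countability is clean once one notes that the relevant data — the isotopy class of the core curve (one of countably many on $S$) and the marked point $p$ — determine the ray essentially uniquely in its asymptote class. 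The other subtlety to watch is that the proposition is stated in $\M(S)$ rather than $\Teich(S)$, so throughout one must pass through the $\Mod(S)$-quotient exactly as in \S\ref{section:asymptote}, using that distances in $\M(S)$ are minimal distances between $\Mod(S)$-orbits of lifts; since all the estimates above (Lemma \ref{lem:short1}, Theorem \ref{thm:distance}) are already phrased compatibly with this, no new difficulty arises there.
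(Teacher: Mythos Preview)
Your treatment of the second assertion is correct and matches the paper: both simply specialize Theorem~\ref{thm:distance} to the one-cylinder case and identify $\M(\hat X)$ with $\M(S\setminus\gamma)$.

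The first assertion, however, has two genuine gaps.

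\medskip
\noindent\textbf{One-cylinder rays are isolated.} You invoke Corollary~\ref{cor:asym} to conclude that any EDM ray $r'$ a \emph{bounded distance} from $r$ satisfies $r'(\infty)=r(\infty)$ and is modularly equivalent to $r$. But Corollary~\ref{cor:asym} characterizes \emph{asymptotic} rays (distance tending to zero), not rays that merely stay a bounded distance apart. In fact two one-cylinder Strebel rays with the same core curve but different endpoints are a bounded distance apart (this is Theorem~2 of \cite{Ma1}) yet are not asymptotic. The paper instead argues as follows: by Lemma~\ref{lem:short1}, any Strebel ray with a combinatorially different cylinder set is an \emph{unbounded} distance from $r$, so the rays bounded-distance from $r$ all have core curve in the class of $\gamma$; then by Strebel's Theorem~21.7, through any fixed $p$ there is a \emph{unique} one-cylinder Strebel differential per homotopy class, and there are only countably many homotopy classes. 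This gives countably many asymptote classes through $p$.

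\medskip
\noindent\textbf{Multi-cylinder rays are not isolated.} Your proposed family --- twist deformations in one cylinder while keeping the moduli ratio and the endpoint fixed --- produces, by the very Corollary~\ref{cor:asym} you cite, rays that are all mutually \emph{asymptotic}, hence a single asymptote class, not uncountably many. (Also, twisting moves the basepoint, so these rays do not all pass through a common $p$.) The paper's mechanism is different and essential: fix $p$ and a set of $\geq 2$ disjoint homotopy classes; Strebel's Theorem~21.7 gives an \emph{uncountable} family of Strebel differentials on $p$ with those core curves, obtained by varying the \emph{ratios of moduli}. By \cite{Ma1} these rays are pairwise a bounded distance apart, but since their moduli ratios differ they are pairwise non-modularly-equivalent and hence (Corollary~\ref{cor:asym}) lie in pairwise distinct asymptote classes. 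That is what violates the isolation condition.
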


We remark that if the cylinder defining the Strebel ray is given
by a separating curve, then $S'$ is disconnected, and so
$\M(S\setminus \gamma)$ is itself a product of smaller moduli
spaces.

\begin{proof} 
By Theorem \ref{theorem:rays}, a ray in $\M(S)$ is
EDM if and only if it is Strebel.  By Theorem 21.7 of \cite{St},
on each Riemann surface there is a unique one-cylinder Strebel
differential in each homotopy class of simple closed curve.  There are only countably
many such homotopy classes. Moreover, given a collection of more than
one distinct homotopy class of disjoint curves, the set of
Strebel differentials with cylinders in those homotopy classes is
uncountable (again, by Theorem 21.7 of \cite{St}). Moreover by
Theorem 2 of \cite {Ma1}, any two Strebel differentials with
homotopic cylinders are a bounded distance apart.  However (again by  Theorem 21.7 of \cite{St}) they are not modularly equivalent and so these classes are not isolated. It follows
easily from Lemma~\ref{lem:short1} that each of these is an
unbounded distance from a ray defined by a one-cylinder Strebel
differential. These facts together imply that the isolated rays
coincide with the one-cylinder Strebel rays.

The fact that the set of asymptote classes of one-cylinder Strebel rays on any moduli space is homeomorphic to the moduli spaces of one smaller complexity, and 
 that the distance between one cylinder Strebel rays of the same type exists and is equal to the Teichmuller distance on the corresponding one complexity smaller moduli space, is the content of 
 Theorem~\ref{thm:distance}.  The fact that isolated EDM rays determined
 by combinatorially inequivalent curves are not bounded distance apart follows from Lemma~\ref{lem:short1}. \end{proof}

With the setup above, we can now prove the main result of this
section: that $\bar{\M(S)}^{ir}$ and $\DM$ are isomorphic as
metric stratified spaces.

\begin{theorem} 
The iterated EDM ray space 
$\bar{\M(S)}^{ir}$ associated to $\M(S)$ is homeomorphic to the
Deligne-Mumford compactification $\DM$ via a
stratification-preserving homeomorphism which is an isometry on
each stratum.
 \end{theorem}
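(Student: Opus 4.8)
The plan is to construct the homeomorphism stratum by stratum, matching the inductive structure of $\bar{\M(S)}^{ir}$ against the stratification of $\DM$ by combinatorial types of noded surfaces, and then to check that the topologies agree.

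\textbf{Step 1: Identify the level-$k$ strata on both sides.} A point of the level-$k$ stratum of $\DM$ is a conformal structure on a $k$-noded surface, i.e. a surface obtained from $S$ by collapsing a collection $\sigma = \{\gamma_1,\dots,\gamma_k\}$ of $k$ disjoint, pairwise non-homotopic, essential simple closed curves; such a point lives in $\M(S\setminus\sigma)$, a product of moduli spaces of lower complexity. On the other side, I would show by induction on $k$ that $D_k(\M(S))$ decomposes, as a disjoint union over all such collections $\sigma$, into copies of $\M(S\setminus\sigma)$. The base case $k=0$ is trivial. For the inductive step, Proposition~\ref{proposition:dm1} (together with Theorem~\ref{thm:distance}) identifies $\mathrm{Asy}(X_j)$ for a factor $X_j = \M(S')$ with the disjoint union, over topological types $\gamma$ of simple closed curves in $S'$, of $\M(S'\setminus\gamma)$, \emph{isometrically on each piece}. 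Applying this to each factor of $D_k(\M(S)) = \prod \M(S_i')$ and taking disjoint unions over which factor and which curve we cut, we get exactly the claimed decomposition of $D_{k+1}(\M(S))$. The key point making this bookkeeping correct is Theorem~\ref{theorem:uniqueness3} (Standing Assumption III): the product decomposition of each $D_k$ is canonical, so ``which factor'' is well-defined and the curve $\gamma$ we adjoin always lies in a well-defined connected subsurface. This yields a canonical bijection $\Psi$ from $\bigsqcup_k D_k(\M(S))$ to the set of points of $\DM$, which is by construction an isometry on each stratum and preserves the stratification. (One should check that combinatorially inequivalent curves give genuinely different strata on both sides — on the $\DM$ side this is the parametrization of strata by combinatorial types of nodes from \cite{Be,DM}; on the ray side it follows from Lemma~\ref{lem:short1}, which shows rays along non-homotopic cylinders are unbounded distances apart.)

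\textbf{Step 2: Descend through the identifications.} The map $\Psi$ is defined on $Y = \bigsqcup_k D_k(\M(S))$ before the Step 3 quotient. I would check that $\Psi$ respects the equivalence relation $\sim$: if $P\sim Q$ because sequences $x_n\to P$, $y_n\to Q$ with $d_{k-1}(x_n,y_n)\to 0$, then $\Psi(x_n)$ and $\Psi(y_n)$ are sequences in $\DM$ converging to $\Psi(P)$ and $\Psi(Q)$ (using Step 3 below) whose distance tends to $0$, forcing $\Psi(P) = \Psi(Q)$ since $\DM$ is Hausdorff and, on a fixed stratum, the Teichm\"uller metric is an honest metric. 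Conversely, if $\Psi(P) = \Psi(Q)$ then $P,Q$ represent the same noded surface, and one produces the approximating sequences directly from the Bers/Fenchel–Nielsen description of $\DM$, so $P\sim Q$. Hence $\Psi$ descends to a bijection $\bar{\M(S)}^{ir}\to\DM$, still strata-preserving and an isometry on strata.

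\textbf{Step 3: Match the topologies.} This is the main obstacle. I must show that the convergence notion of Definitions~\ref{df:top1}–\ref{df:top2} (which defines the topology on $\bar{\M(S)}^{ir}$) corresponds under $\Psi$ to convergence in $\DM$ (for which $X_n\to Y$ iff $(Y\setminus N)$ embeds conformally in $X_n$ for large $n$, for every neighborhood $N$ of the nodes). The essential geometric content is exactly Theorem~\ref{thm:distance} and its proof: a sequence $x_n = r(t_n)$ with $t_n\to\infty$ along a one-cylinder Strebel ray $r$ is, in the $\DM$ picture, obtained by gluing a long flat cylinder of modulus $\to\infty$ into $r(\infty)$ along $\gamma$; pinching that cylinder (modulus $\to\infty$) is precisely the statement that $r(t_n)\to r(\infty)$ in $\DM$. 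Conversely, any sequence in $\M(S)$ converging in $\DM$ to a noded surface with pinching curve $\gamma$ must, by the collar lemma and Maskit's comparison of extremal and hyperbolic length (cited after (\ref{eq:b2})), have $\Ext_{x_n}(\gamma)\to 0$ and hence, by Minsky's product theorem (\cite{Mi1}, used above) and Lemma~\ref{lem:short1}, lie within bounded distance of the one-cylinder Strebel ray along $\gamma$ through the limiting surface — so it is of the form required by Definition~\ref{df:top1}. The higher-depth cases (Definition~\ref{df:top2}, iterated pinching) follow by induction on $k$, peeling off one pinching curve at a time; Standing Assumption~II (asymptotes are \emph{uniformly} asymptotic, which for $\M(S)$ is the quantitative estimate in Lemma~\ref{lem:short1}(1) and Theorem~\ref{thm:distance}) is what lets the $\epsilon_n$-neighborhood conditions in Definition~\ref{df:top2} be verified. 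Having matched convergent sequences in both directions, and since both spaces are second countable (so sequential continuity suffices), $\Psi$ and $\Psi^{-1}$ are continuous, giving the desired strata-preserving homeomorphism that is an isometry on each stratum.
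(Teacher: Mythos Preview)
Your overall strategy matches the paper's: build $\Psi$ inductively via one-cylinder Strebel rays (Proposition~\ref{proposition:dm1}, Theorem~\ref{thm:distance}), check it respects the identifications, then verify continuity in both directions. The paper organizes this into six steps but the content is essentially the same as your three.

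There is, however, a genuine gap in your Step~3, in the continuity of $\Psi^{-1}$. You argue that if $X_n\to Y$ in $\DM$ with pinching curve $\gamma$, then by Minsky's product theorem and Lemma~\ref{lem:short1} the $X_n$ ``lie within bounded distance'' of the Strebel ray $r$ with $r(\infty)=Y$, and conclude this gives convergence in the sense of Definition~\ref{df:top1}. But Definition~\ref{df:top1} requires $d(x_n,r(t_n))\to 0$, not merely bounded distance, and Minsky's product theorem is only a quasi-isometry with \emph{additive} error: it cannot distinguish distance $0$ from distance $C$. The paper closes this gap differently. It introduces the Earle--Marden $(s,t)$ plumbing coordinates around $Y$: given $X_n$ near $Y$, one reads off the modulus of the pinching annulus from the $t$-coordinate, chooses $u_n$ so that the cylinder on $r(u_n)$ has exactly that modulus, and then invokes the explicit construction in the proof of Theorem~\ref{thm:distance} (Lemmas~\ref{lem:close} and~\ref{lem:punctures}) to produce, for any $\epsilon>0$ and $n$ large, a $(1+\epsilon)$-quasiconformal map $X_n\to r(u_n)$. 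This is the mechanism---reused in the injectivity argument and in the inductive step for deeper strata---that upgrades ``bounded distance'' to ``distance $\to 0$''. Your sketch of injectivity in Step~2 (``produce the approximating sequences directly from the Bers/Fenchel--Nielsen description'') is similarly missing this ingredient; the paper's corresponding step again relies on the $(s,t)$ coordinates and the matching-of-moduli trick to carry it out.
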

 
\begin{proof} First recall that the set of level $k$ strata of
$\DM$ is parametrized by the set of combinatorial types of
$k$-tuples of simple closed curves on $S$, representing the
curves that are pinched to nodes.  Each level $k$ stratum
corresponding to a $k$-tuple $\{\alpha_1,\ldots ,\alpha_k\}$ is a
product of the moduli spaces of the punctured surfaces consisting
of the components of $S\setminus\{\alpha_1,\ldots ,\alpha_k\}$.
Further, we have endowed each stratum with the Teichm\"{u}ller
metric of the corresponding moduli space or, in the case of
disconnected surfaces, with the $\sup$ metric on the product of
moduli spaces.

\bigskip
\noindent
{\bf Step 1 (Defining a surjective map): }  We first define a map  $$\psi: \cup_{k=0}^\infty D_k(\M(S))\to 
\DM$$ inductively, as follows.  On $D_0(\M(S))$ we simply let $\psi$ be the identity map.  Each factor that was a point that was removed is sent to the moduli space of a three times punctured sphere which is itself a point.   By Proposition \ref{proposition:dm1}, the isolated EDM rays in $D_0(\M(S))$ are precisely the one-cylinder Strebel rays.  The equivalence classes of one-cylinder Strebel differentials correspond precisely to the topological types of simple closed curves on $S$.  By Corollary \ref{cor:asym}, the asymptote classes of one-cylinder Strebel rays $r$ correspond to the possible endpoints $r(\infty)$.  By Strebel's existence theorem (Theorem 23.5 of \cite{St})), every possible endpoint can occur, so 
that $D_1(\M(S))$ consists of all possible surfaces obtainable by pinching a single simple closed curve on $S$.  Thus $D_1(\M(S))$ is the disjoint union of moduli spaces, one for each topological type of simple closed curve.  By Theorem \ref{thm:distance}, 
the  metric $d_1$ on $D_1$ coincides with the corresponding Teichm\"{u}ller metric on each component 
of $D_1(\M(S))$.  We define 
$\psi$ on each component of $D_1(\M(S))$.  If the component is not a product we  map an asymptote class $[r]$ of rays to the corresponding endpoint $r(\infty)$.  If the component is a product, then for each factor we define $\psi$ by fixing the coordinates of the other factors  and map an asymptote class of rays in the factor to its endpoint. By the above, on each component, this map is an isometry onto 
the component of $\DM$ corresponding to the appropriate combinatorial type of simple closed curve.

Suppose now inductively that we have proven that each component of $D_k(\M(S))$ is isometric via a map $\psi$ to  a  (products of) moduli spaces, and the map is onto the collection of moduli spaces, one  for each combinatorial type of $k$-tuple of simple closed curves.  Fix any component of $D_k(\M(S))$, corresponding to a $k$-tuple $\{\alpha_1,\ldots ,\alpha_k\}$, and let $\M(S')$ be the corresponding (products of) moduli spaces $\M(S_1)\times \ldots \times \M(S_p)$, where $S'=S\setminus \{\alpha_1,\ldots ,\alpha_k\}$.  For each factor in this product we find the asymptote classes of isolated EDM rays, again given by the one cylinder Strebel differentials. We thus obtain  
components of $D_{k+1}(\M(S))$, and these 
components correspond to the possible combinatorial types of $(k+1)$-tuples obtainable from $\alpha_1,\ldots ,\alpha_k$ by adding a single simple closed curve.  We again define $\psi$ on each component by 
sending each asymptote class $[r]$ to $r(\infty)$, and if the component is a product, defining it to be the identity on the other coordinates.   As above, we see that $\psi$ is an isometry when restricted to any of the fixed components just obtained.  By Strebel's theorem again, the map is onto  all $(k+1)^{\rm st}$ strata in $\DM$.  

We have therefore inductively defined a map 
$$\psi:\bigcup_kD_k(\M(S))\to \DM$$
which we have shown to be onto (by Strebel's existence theorem), and which is an isometry when restricted to any fixed component of any fixed $D_k(\M(S))$.  

\medskip
\noindent
{\bf Step 2 (The standing assumptions hold): }  Standing Assumption I holds by the fact discussed above, that if two EDM rays are defined  
by pinching the same combinatorial type of curve then the rays have an asymptotic distance  apart, and by  the fact that  if the topological types are different then the rays are not bounded distance apart.
The latter follows from Lemma~\ref{lem:short1}

Now we show Standing Assumption II holds.
Let $[r]$ be an asymptotic class of isolated EDM ray on any moduli space with $r$ any representative. As we have seen, on the surface $r(\infty)$ there is a quadratic differential $q(\infty)$ with double poles at the paired punctures, such that the vertical trajectories are all closed curves of equal length isotopic to the punctures. Since $q(\infty)$ is the unique (up to scalar multiple) quadratic differential with this property,   any two representatives determine the same $q(\infty)$.
Since the Strebel differentials along $r$ can be reconstructed by cutting out punctured discs on $r(\infty)$   and gluing along the boundary circles of $q(\infty)$, the ray $r$ is determined by a single twist parameter;  namely, how the circles are glued to each other. Thus the Strebel differentials  on any two rays  differ  by only a twist
about the core curve, and the amount of twisting is bounded by the length of the curve.   For any two points $r_1(t_1)$ and $r_2(t_2)$ along two such rays, if the moduli of the cylinders $M_1,M_2$ are equal and large, then $d(r(t_1),r_2(t_2))$ is small;
there is a  $O(1+1/M_1)$-quasiconformal map of the cylinders that realizes the twisting.  Standing Assumption II follows. 

Standing Assumption III holds since , as disscussed before Theorem \ref{theorem:uniqueness3}, the hypotheses of that theorem are satisfied by a product of Teichm\"{u}ller spaces.

\medskip

\noindent
{\bf Step 3 (\boldmath$\psi$ is continuous): }  Suppose  $x_n\in D_k(\M(S))$ converges to 
$z\in D_{k+m}(\M(S))$ as in Definitions \ref{df:top1} or \ref{df:top2}.  The proof of continuity of $\psi$ is by induction on $m$. Assume $m=1$.  If the  component of $D_k$ containing $x_n$ is a product, then by definition all of the  coordinates but one  of $\psi(x_n)$ in the product coincide with the corresponding coordinates of $x_n$. By assumption, these converge to the corresponding coordinates  of $\psi(z)$. Thus we can assume that the component of $D_k$ is not a (nontrivial) product.  Then  $\psi(z)$ is the Riemann surface $r(\infty)$, where $r$ is an EDM ray
in $D_k(\M(S))$, and $d_k(x_n,r(t_n))\to 0$ for a sequence $t_n\to\infty$.  The fact that $r(\infty)$ is the endpoint of $r$ says that $\psi(r(t_n))\to \psi(z)$ as $t_n\to\infty$ in the topology of 
$\DM$.   The fact that $d_k(x_n,r(t_n))\to 0$  says there is a sequence of $(1+o(1))$-quasiconformal maps  of 
 $\psi(x_n)$  to $\psi(r(t_n))$.  These converge to  a conformal map of a limit  $\psi(r(t_n))$ to $\psi(z)$.  Thus any such limit must in fact coincide with $\psi(z)$.

Now suppose the continuity of $\psi$ has been proved for all $p\leq m$ and  $m=p+1$. 
Again it suffices to assume that $D_k$ is not a product. Let $y_n$ a sequence in  $D_{k+1}(\M(S))$ such that $y_n\to z$ as in Definition \ref{df:top2}. There is a sequence of isolated rays $r_n$ in $D_k$ defined by one-cylinder Strebel differentials with core curve some $\gamma$ such that $y_n=r_n(\infty)$.      
By the induction hypothesis $\psi(y_n)\to \psi(z)$.  
Now assumption (2) in the definition of the topology implies that $$\text{Ext}_{r_n(t_n)}(\gamma)\to 0,$$  
for otherwise there would be rays in the same asymptote class whose distance from $r_n(t_n)$ does not tend  to $0$.  
Consider the $p+1$ nodes of $\psi(z)$ corresponding to pinching $p+1$ curves.  Without loss of generality we can assume the last $p$ of them are pinched along $\psi(y_n)$.   Form small neighborhoods of the corresponding paired punctures  on $\psi(z)$.  By definition of the topology, since     
 $\psi(y_n)\to \psi(z)$,  there is a conformal map of the complement of the last $p$ pair of neighborhoods to $\psi(y_n)$ for $n$ large. For each such  $n$, there is a conformal map of the complement of the first pair of neighborhoods to $r_n(t_n)$  for $t_n$  sufficiently large.  This shows that $\psi(r_n(t_n))\to \psi(z)$.  By assumption, there is a sequence of $(1+o(1))$-quasiconformal maps from $\psi(x_n)$ to $\psi(r_n(t_n))$, and therefore $\psi(x_n)\to \psi(z)$ as well. 
This shows that $\psi$ is continuous. 

\medskip
\noindent
{\bf Step 4 (Factoring $\psi$): }  Now the map $\psi$ itself is not injective, since one can have two combinatorially distinct $j$-tuples of curves which become combinatorially equivalent when one additional 
curve is added.  For example, if $S$ is closed of genus $2$, then in $D_2(\M(S))$ the component 
corresponding to pinching a separating and nonseparating curve is counted twice.  
However we show now that  the final identification Step 4 precisely identifies, by definition, such tuples.
Namely we show that the  map $\psi$  factors through a map 
$$\Psi:\bar{\M(S)}^{ir}\to \DM.$$

Suppose $z,z'\in D_{k+1}(\M(S))$ and $z\sim z'$.  We have to  show $\psi(z)=\psi(z')$.  By definition there  are
sequences $x_n,x_n'\in D_{k-1}(\M(S))$ that satisfy $d_{k-1}(x_n,x_n')\to 0$; $x_n\to z$, $x_n'\to z'$. 
By the continuity of $\psi$ we have $\psi(x_n)\to\psi(z)$ and $\psi(x_n')\to \psi(z')$. Since $
d_{k-1}(x_n,x_n')\to 0$, there is a sequence of $(1+o(1))$-quasiconformal maps from $\psi(x_n)$ to $\psi(x_n')$. Therefore we also have $\psi(x_n')\to \psi(z)$ and so $\psi(z)=\psi(z')$.  We have shown that there is a well-defined 
map $\Psi:\bar{\M(S)}^{ir}\to \DM$.

\medskip
\noindent
{\bf Step 5 ($\Psi$ is injective): }   We must prove that 
if $\Psi(z)=\Psi(z')$, then $z$ has been identified with $z'$.
We can assume $z,z'$ are in different components of $D_{k+1}(\M(S))$.  Again we can assume the components are not products; hence they are endpoints of rays $r,r'$ in  different components 
$E,E'$ of $D_k(\M(S))$.  Let $x_n\in D_{k-1}(\M(S))$ such that $x_n\to  z$. 
We wish to show $x_n\to z'$ as well, for then $z$ is identified with $z'$. 
We have  $X_n:=\Psi(x_n)\to Z:=\Psi(z)$.

\subsection { $(s,t)$ coordinate system}
Before continuing the proof we need to describe a coordinate system about $Z$  which allows us to represent any surface near $Z$  in the coordiante system. 
This coordinate system is due to \cite{EM} (see also \cite{Ma2} and \cite{W}). 
  
We may lift so that  $Z$ is in the augmented Teichmuller space. We will find a neighborhood $\V$ of $Z$ 
 whose  intersection   with $\Teich(S)$ will not be locally compact.  
 We can separate the nodes of $Z$ into pairs of punctures, denoted $p_i,q_i$.   
Choose conformal neighborhoods $V_i=\{z_i: 0<|z_i|<1\}$ and
$W_i=\{w_i:0<|w_i|<1\}$ of $p_i$ and $q_i$.  Also choose points $P$ and $Q$ on the pairs of circles of radius $1$. The discs  may be taken to be
mutually disjoint.  For each component $Z_l$ of $Z$ choose a nonempty  open
set $\W
$ disjoint from $\cup_i (V_i\cup W_i)$.  Let $n_l$ denote the complex  dimension of $\Teich(Z_l)$.  There exist Beltrami
differentials $\nu_1,\ldots, \nu_{n_l}$ supported  in $\W$ whose equivalence classes   form a basis for 
 the tangent space to $T_{Z_l}$ at $Z_l$. This implies  that for any $Y_l$ sufficiently close to $Z_l$, there is a $n_l$-tuple  $s(Y)=(s_1,\ldots s_{n_l})$ of complex numbers close to $0$ and a quasiconformal map $f:Z_l\to Y_l$ such that the dilatation  $\mu(f)$ of $f$ satisfies  
$$
\mu(f)=\sum_{i=1}^{n_l} s_i \nu_i.
$$

We do this for each component of $Z$.  The result is a parametrization of surfaces in a  neighborhood of $Z\in \V$  that lie in the bordification, by
$s\mapsto Z(s)$ for a neighborhood of $0$ in $\CC^N$, for some $N$.

Since the map $f$ (on each component) is conformal in $U_i\cup V_i$,  
the coordinates $z_i,w_i$ are local holomorphic coordinates in neighborhoods $V_i,W_i$ of the punctures on each $Z(s)$.   
Now choose a $p$-tuple $t=(t_1,\ldots, t_p)$  complex numbers in a small neighborhood of the  origin. 
 For each surface  $Z_s$, and for each $1\leq i\leq p$, remove the disc of radius $|t_i|^{1/2}$ from each of $V_i$ and $W_i$, and then  glue $z_i$ to $t_i/w_i$.  
We note that in this notation $Z(s,0)=Z(s)$; so if all $t_i=0$, then there are no disks to remove.

To define the neighborhod in $\Teich(S)$ we need to choose markings on $Z(s,t)$ 
by choosing a homotopy class of arcs joining $P$ and $Q$ crossing the  glued annulus. Thus we have a marking of the surface $Z(s,t)$ consisting of the marking of  $Z=Z(0,0)$, the curves along which we glued, and for each such curve, a transverse arc crossing the 
 annulus.  Note that markings differ by Dehn twists about the glued curve, and since these are arbitrary the resulting neighborhood is not locally compact.

 We continue the proof that $\psi$ is injective. We can lift to Teichmuller space and find the coordinate system $(s,t)$ around $Z$. 
Since $Z$ lies in a moduli space of two fewer dimensions than $X_n$, there are two plumbing coodinates $t_1,t_2$ such that
 the coordinates $t_1(n),t_2(n)$ of $X_n$ are both nonzero.   
 
We can assume  that points of $E'$ have  coordinate $t_1=0$, and  the $t_2$ coordinate tends to $0$ along the ray $r'(u)$ as $u\to\infty$. We can assume that points of $E$ have $t_2=0$.  The $s$ coordinate of $X_n$ approaches $0$.  For each $n$, we can find a time $u_n$ such that the modulus of the cylinder on $\Psi(r'(u_n))$ coincides with the modulus of the corresponding annulus on $X_n$.
For each such  $r'(u_n)$ there is a ray $r_n'\subset D_{k-1}(\M(S))$ such that $r'(u_n)=r_n'(\infty)$.  We can choose a time $l_n$ so that the corresponding cylinder on $r_n'(l_n)$ has the same modulus as the corresponding  annulus on $X_n$.
  Now, just as in the proof of Theorem~\ref{thm:distance},  as $n\to\infty$ there is a sequence of  
$(1+o(1))$-quasiconformal maps from $X_n$ to $\Psi(r'(l_n))$, and by the definition of the topology on the union of the $D_j(\M(S))$, we have that $x_n\to z'$. 

\medskip
\noindent
{\bf Step 6 ($\Psi^{-1}$ is continuous):  }Suppose then that $X_n\in \M(S')$ converges to $Z$ in $\DM$. 
Again we can form an $(s,t)$ coordinate neighborhood system  about $Z$
such that, after re-indexing, the $t$ coordinates of $X_n$ are given by  
$(t_1(n),\ldots, t_k(n))\neq 0$.  Here 
$k$ is the number of curves of $X_n$ that we pinch to get $Z$.
The proof is by induction on $k$ and resembles the proof that $\Psi $ is injective.  
  Suppose $k=1$. Let $r$ be the Strebel ray
with endpoint $r(\infty)=Z$, so by definition, $\Psi([r])=Z$.
For each $n$, we can find a time $u_n$ such that the modulus of
the cylinder on $r(u_n)$ is the same as the modulus about the
pinching curve on $X_n$ found by the plumbing
construction.  Now
again just as in the proof of Theorem~\ref{thm:distance}, for any
$\epsilon$, for $n$ large enough, we can find a
$(1+\epsilon)$-quasiconformal map from $X_n$ to $r(u_n)$.  
Then
by definition, $X_n\to [r]=\Psi^{-1}(Z)$ in the topology of
$\bar{\M(S)}^{ir}$.

Now for the induction step.  Suppose we have proven the desired
limit for $k-1$, where $Z$ is found by pinching along $k$ curves. We
have $Z=\Psi([r_0])$ for some ray $r_0$.  Let $Y_n$ have the same $(s,t)$ 
coordinates as $X_n$ except that we require $t_1=0$. This means
that we find $Z$ from $Y_n$ by pinching $k-1$ curves.  Let $q_n$
be the Strebel differential on $Y_n$ with double poles at the punctures
corresponding to $t_1=0$, and let $r_n$ be the corresponding
Strebel ray with endpoint $r_n(\infty)=Y_n$.  By definition,
$\Psi([r_n])=Y_n$.  Now $Y_n\to Z$ in $\DM$, and by the induction
hypothesis on the continuity of the map $\Psi^{-1}$, we see that
$[r_n]\to [r_0]$. Just as above we may choose $u_n$ so that the
modulus of the cylinder on $r_n(u_n)$ is the same as the
modulus of the annulus corresponding to the $t_1$ coordinate in 
the plumbing
construction.  By definition of the topology of
$\bar{\M(S)}^{ir}$ it is again enough to prove that $d_{\M(S)}(X_n,
r_n(t_n))\to 0$.  But this again follows just as in the proof of
Theorem~\ref{thm:distance}:  there is a conformal map 
$X_n\to \Psi(r(u_n))$ in the complement of annuli with large but equal
moduli; then for any $\epsilon$, for $n$ large enough, we can
find a $(1+\epsilon)$-quasiconformal map from $X_n$ to $\Psi(r(u_n))$.
This completes the proof.
\end{proof}

\section{Further geometric properties}

\subsection{A strange example}

In this subsection we indicate some of the difficulties of the Teichm\"{u}ller geometry of $\M(S)$ by exhibiting two sequences of EDM rays $r_n,r'_n$, with the following properties:  there exists a constant $D>0$ and 
sequences of times $t_n, t'_n\to\infty$ such that $d_{\M(S)}(r_n(t_n),r'_n(t'_n))\leq D$, each 
sequence $r_n, r_n'$ converges to an EDM ray $r_\infty, r'_\infty$ uniformly on compact intervals of time, and yet $r_\infty$ does not stay within a bounded distance of $r'_\infty$.  
This example violates Assumption 9.11 of  \cite{JM}, so that 
the Ji-MacPherson compactification method cannot be applied to $\M(S)$.  This partially explains why we took a different approach.

We construct a sequence of rays $r_n$ as follows.  Let
$r_0$ be a Strebel ray corresponding to a maximal collection of
curves $\beta_1,\ldots, \beta_{3g-3+n}$ whose cylinders have
equal moduli. Note that $r_0(\infty)$ is the unique maximally
noded Riemann surface within its combinatorial equivalence class.
Let $\alpha$ be a curve distinct from the $\beta_i$ and therefore
it has positive intersection with some $\beta_j$.  Let $T_\alpha$ denote the Dehn twist about $\alpha$.  Let $r_n$ be
the Strebel ray through $r_0(0)$ corresponding to the Strebel
differential whose set of core curves is $\{T_\alpha^n(\beta_i)\}$ and whose cylinders have equal
moduli. This is possible by a theorem of Strebel (\cite{St}, Theorem 21.7).  Note
that $r_n(\infty)=r_0(\infty)$ for each $n$, since the
collection $\{T_\alpha^n(\beta_i)\}$ is combinatorially
equivalent to $\{\beta_i\}$.   Since the rays are modularly
equivalent they are asymptotic (Corollary  \ref{cor:asym} above), so we can choose
times $t_n, t_n'\to\infty$ such that $d_{\M(S)}(r_n(t_n),r_0(t_n'))$ is
uniformly bounded.

On the other hand the rays $r_n$ converge uniformly on compact
sets in time to a ray $r_\infty$, where $r_\infty$ corresponds 
to the unique one cylinder Strebel
differential with core curve $\alpha$.  Taking $r_n'=r_0$ so that
$r_\infty'=r_0$ for all $n$, we have $d(r_\infty,r_\infty')=\infty$ by
Lemma~\ref{lem:short1}. 

\subsection{The set of asymptote classes of all EDM rays}

In this subsection we give a parametrization of the set of asymptote classes of all (not necessarily isolated) EDM rays.  As we will see, this space is naturally a closed simplex bundle $B$ over 
$\DM$.  Let $S$ be a surface of genus $g$ with $n$ punctures.  
The fiber over a point $\hat X\in\M_{g',n'}$, where $(g',n')\neq (g,n)$,  
 consists of projective classes $(b_1,\ldots, b_p)$ of vectors.  Let $\Sigma$ be the collection of all 
asymptotic classes of EDM rays  on $\M_{g,n}$. 
We define a map $$\Phi:\Sigma\to \B.$$  

Let $[r]$ be an equivalence class of rays.  Let $r$ any
representative with cylinders $C_1,\ldots, C_p$ with moduli
$\mod(C_1),\ldots, \mod(C_p))$.  By
Corollary~\ref{cor:asym} the projective class of the vector of
moduli is independent of the choice of representative and the
endpoint $r(\infty)$ is independent of the representative. Define
$\Phi([r])$ to be the point whose base is $r(\infty)$ and whose
fiber is the projective vector $(\mod(C_1),\ldots, \mod(C_p))$

\begin{theorem}
The map $\Phi$ is a homeomorphism onto the open simplex subbundle $\B_0$ where no coordinate is $0$.
\end{theorem}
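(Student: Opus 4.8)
The plan is to check, in order, that $\Phi$ is well defined, that it is injective, that its image is exactly $\B_0$, and that it is a homeomorphism onto $\B_0$; of these, only the last requires real work.

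I would begin with well-definedness and the computation of the image. By Theorem~\ref{theorem:rays} an EDM ray is the same thing as a Strebel ray, so a class $[r]\in\Sigma$ has a well-defined finite family of maximal cylinders $C_1,\dots,C_p$ (with pairwise non-isotopic core curves $\alpha_i$) and a well-defined endpoint $r(\infty)\in\DM$. Pinching the $\alpha_i$ creates exactly $p$ nodes, so $r(\infty)$ lies in the stratum of $\DM$ labelled by the combinatorial type of the multicurve $\{[\alpha_1],\dots,[\alpha_p]\}$, and this is a proper lower stratum since $p\ge1$: by part (1) of Lemma~\ref{lem:short1} the curves $\alpha_i$ become arbitrarily short along the ray, so $r(t)$ leaves every compact set, and $r(\infty)\in\M_{g',n'}$ with $(g',n')\ne(g,n)$. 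Since the moduli $\mathrm{mod}(C_i)$ all scale by the common factor $e^{2t}$ along the ray, the projective vector $[\mathrm{mod}(C_1):\cdots:\mathrm{mod}(C_p)]$ is a ray invariant, and by Corollary~\ref{cor:asym} two representatives of $[r]$ are modularly equivalent with the same endpoint, so both $r(\infty)$ and this projective vector depend only on $[r]$; as every $\mathrm{mod}(C_i)>0$, the image of $\Phi$ lies in the open subbundle $\B_0$. Injectivity is then immediate from Corollary~\ref{cor:asym}: if $\Phi([r])=\Phi([r'])$, then $r$ and $r'$ have the same endpoint --- hence the same $p$ and, since the stratum of $\DM$ determines the multicurve up to the action of $\Mod(S)$, combinatorially equivalent collections of core curves --- together with proportional moduli vectors, which after reindexing are precisely conditions (1)--(3) in the definition of modular equivalence; thus $r$ and $r'$ are modularly equivalent with a common endpoint, hence asymptotic, hence $[r]=[r']$.

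For surjectivity onto $\B_0$ I would run the endpoint construction in reverse. Given a point of $\B_0$, i.e. a noded surface $\hat X$ whose stratum is labelled by a multicurve $\{\alpha_1,\dots,\alpha_p\}$ together with a strictly positive projective vector $[b_1:\cdots:b_p]$, recall that $\hat X$ carries the quadratic differential $q(\infty)$ with double poles and equal residues at the paired punctures and with all vertical trajectories closed and isotopic to the punctures. Plumbing $\hat X$ --- removing discs of radius $|t_i|^{1/2}$ about the paired punctures and regluing via $z_i\sim t_i/w_i$ --- yields, for each choice of parameters, a Riemann surface carrying a Strebel differential whose maximal cylinders $A_i$ have core curves $\alpha_i$; choosing the $|t_i|$ so that $\mathrm{mod}(A_i)$ is proportional to $b_i$ and letting the common scale run to the degenerate limit produces a Strebel ray $r$ with $r(\infty)=\hat X$ and cylinder moduli projectively $[b_1:\cdots:b_p]$, so $\Phi([r])$ is the chosen point. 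That every such endpoint is attained is Strebel's existence theorem (Theorem~23.5 of~\cite{St}), and the freedom in the $|t_i|$ is exactly what sweeps out the full open simplex of moduli. The faces $\B\setminus\B_0$, where some $b_i=0$, are not in the image: a vanishing modulus means the corresponding cylinder is absent, i.e. one fewer curve is pinched, which is a point over a higher stratum of $\DM$ --- this is how the simplex-fiber boundaries of $\B$ glue onto the stratification of $\DM$.

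The hard part will be the homeomorphism statement. One topologizes $\Sigma$ so that $[r_n]\to[r]$ when, for suitable representatives and times $t_n\to\infty$, the surfaces $r_n(t_n)$ converge to $r(\infty)$ in $\DM$ while the cylinder moduli of $r_n(t_n)$ converge projectively to the moduli vector of $r$; with this topology $\Phi$ is continuous essentially by unwinding the two definitions (once one notes that the bundle topology of $\B_0$ over $\DM$ is the one induced from $\DM$ and the simplex fibers). The substance is the continuity of $\Phi^{-1}$: given $(\hat X_n,v_n)\to(\hat X,v)$ in $\B_0$, I would show that the rays $r_n$ with $\Phi([r_n])=(\hat X_n,v_n)$ converge, in the sense above, to the ray $r$ with $\Phi([r])=(\hat X,v)$, arguing exactly as in the proof of Theorem~\ref{thm:distance}: in the $(s,t)$-plumbing coordinates about $\hat X$ in $\DM$, the convergence $\hat X_n\to\hat X$ supplies conformal maps on the complements of shrinking node neighbourhoods, and the convergence $v_n\to v$ lets one glue these across annuli of large but nearly equal moduli by $(1+o(1))$-quasiconformal maps, which is precisely the asserted convergence of tails of rays. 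I expect this to be the main obstacle, and the reason it is not automatic is exactly the pathology of the strange example above: an asymptote class need not vary continuously in the naive uniform-on-compacta sense, so one must track the $\DM$-endpoint and the cylinder moduli \emph{simultaneously}, and it is the quasiconformal bookkeeping of Theorem~\ref{thm:distance} and of the $(s,t)$ coordinates of the Deligne-Mumford reconstruction that makes that simultaneous control go through.
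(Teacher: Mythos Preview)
Your argument is correct, and your treatment of well-definedness, injectivity, and surjectivity matches the paper's in substance (the paper also invokes Corollary~\ref{cor:asym} for the first two and Strebel's Theorem~23.5 for the third). The one genuine difference is in how surjectivity is realized: rather than fixing the equal-residue $q(\infty)$ and varying the plumbing radii $|t_i|$ to hit a prescribed modulus vector, the paper applies Strebel's Theorem~23.5 directly on $\hat X$ to produce a meromorphic quadratic differential whose closed vertical trajectories about the paired punctures have \emph{prescribed} circumferences $1/M_i$, then cuts each half-infinite cylinder at height $1/2$ and glues; the resulting cylinders have height $1$ and circumference $1/M_i$, hence moduli exactly $M_i$. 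This buys a cleaner inverse map $\Phi^{-1}$, since the ray is now visibly a continuous function of the data $(\hat X, v)$.

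Where you diverge most from the paper is the continuity step, which you flag as ``the hard part'' and propose to handle with the $(s,t)$ plumbing coordinates and the quasiconformal gluing of Theorem~\ref{thm:distance}. The paper dispatches this in two sentences: the differential $(\hat X,\hat q)$ produced above depends continuously on $\hat X$ and on the vector $v$ (this is the continuity clause in Strebel's existence and uniqueness theorem), hence so does the glued surface and its Strebel differential, hence so does the ray; conversely, the endpoint and the moduli are continuous functions of the defining differential. Your route would certainly work, but it re-derives by hand what continuous dependence of Strebel differentials gives for free; the ``strange example'' you worry about is not an obstruction here precisely because $\Phi$ tracks the endpoint and the modulus vector simultaneously, which is already built into the target $\B_0$.
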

\begin{proof}
The map $\Phi$ is clearly injective. To show surjectivity let  $\hat X\in \M_{g',n'}$ any point;    $v=(M_1,\ldots M_j)$ a projective vector.  Pick a representative vector $v$ and let $(\hat X,\hat q)$ be the (unique) quadratic differential on $\hat X$ such that 
\begin{itemize}
\item $(\hat X,\hat q)$ has   double poles at the punctures,
\item  the vertical trajectories are closed loops isotopic to the punctures
\item   the lengths of the vertical trajectories are $1/M_i$ for each paired puncture.
\end{itemize}
This is possible by Theorem 23.5 of \cite{St}.  
 Remove a punctured disc around each paired puncture so that the remaining cylinder has height $1/2$.  Glue together along the circles.  The corresponding cylinders $C_i$ have height $1$.  The moduli of the cylinders are therefore $M_i$. We may choose the representative $v$ so that the area of the resulting $(X,q)$ is $1$. 
This gives a corresponding geodesic ray $r(t)$.  We have that $\hat X=r(\infty)$, 
so that  $\Phi([r])=(\hat X, M_1,\ldots, M_j)$  
 
The quadratic differential $(\hat X,\hat q)$ depends continously on $\hat X$ and
the vector $v$, which implies that the ray $[r]$ depends continuously on these parameters so that the map $\Phi^{-1}$ is continuous. The map $\Phi$ is continuous because the endpoints and moduli depend continously on the quadratic differentials defining the ray. 
\end{proof}

\subsection{Tits geometry of the space of EDM rays}
\label{section:tits}

In this section we compute some invariants for pairs of EDM rays.  These invariants are fundamental in the study of nonpositively curved manifolds (see, e.g., \cite{Eb}, Chapter 3).  

\begin{definition}
Let $r(t),r'(t)$ a pair of EDM rays in a metric space $(X,d)$.  We define the {\em pre-Tits distance} 
$\ell(r,r')$ between $r$ and $r'$ to be 
$$\ell(r,r'):=\lim_{t\to\infty}
\frac{d(r(t),r'(t))}{t}$$ if the limit exists.  
\end{definition}

For simply-connected, nonpositively curved manifolds $X$, the {\em Tits distance} on the visual boundary $\partial X$ is equal to the path metric induced by $\ell$ (\cite{Eb}, Prop. 3.4.2).  The quantity $\ell$ is related to the {\em angle metric} $\angle(r,r')$ on $\partial X$ via 
$$\ell(r,r')=2\sin(\frac{1}{2}\angle(r,r'))$$
(see \cite{Eb}, Prop. 3.2.2).

Our goal now is to compute $\ell$ for pairs of EDM rays in $\M(S)$.

\begin{theorem}
Let $r,r'$ be EDM rays defined by Strebel differentials
$(X,q)$ and $(X',q')$ with core curves $\{\gamma_i\}$ 
and $\{\gamma_j'\}$. 
The Tits angle between $r$ and $r'$ is $0$ if  there is an element $\phi$ of the mapping class
group sending $\{\gamma_i\}_{i=1}^p$ to $\{\gamma_j'\}_{i=i}^{p'}$.  The angle is $1$ if
the above does not hold but there is an element $\phi$ of the mapping
class group such that $i(\phi(\gamma_i),\gamma_j')=0$ for all
$\gamma_i,\gamma_j'$.  The angle is $2$ otherwise.
\end{theorem}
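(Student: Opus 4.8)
The plan is to compute the \emph{pre-Tits distance} $\ell(r,r')=\lim_{t\to\infty} d_{\M(S)}(r(t),r'(t))/t$, and show it equals $0,1,2$ in the three cases (the ``Tits angle'' of the statement being this quantity $\ell$, which determines and is determined by the visual angle). Two preliminaries set the stage. First, $\ell\le 2$ always: since $r,r'$ are EDM we may assume $d(r(t),r(0))\le t$, so $d_{\M(S)}(r(t),r'(t))\le d(r(t),r(0))+d(r(0),r'(0))+d(r'(0),r'(t))\le 2t+O(1)$. Second, Lemma~\ref{lem:short1} gives, for the Strebel differential $q$ with cores $\{\gamma_i\}$, a single constant $c>0$ so that for all large $t$ and \emph{every} simple closed curve $\beta$: $\Ext_{r(t)}(\beta)\asymp e^{-2t}$ if $\beta\simeq\gamma_i$ for some $i$; $\Ext_{r(t)}(\beta)\ge c$ if $i(\beta,\gamma_i)=0$ for all $i$ and $\beta$ is isotopic to no $\gamma_i$; and $\Ext_{r(t)}(\beta)\ge ce^{2t}$ if $i(\beta,\gamma_i)>0$ for some $i$ (and likewise for $q'$, $\{\gamma_j'\}$). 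The value of $\ell$ will be read off by feeding the core curves of one differential through the distance-realizing quasiconformal map and using these estimates.

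For \textbf{Case $0$} (some $\phi\in\Mod(S)$ carries $\{\gamma_i\}$ onto $\{\gamma_j'\}$): after matching cylinders by $\phi$, $r$ and $r'$ are Strebel rays with combinatorially equivalent cylinder systems, so by Theorem~2 of \cite{Ma1} (and since $\Teich(S)\to\M(S)$ does not increase distance) $d_{\M(S)}(r(t),r'(t))$ is bounded in $t$; hence $\ell=0$. For the \textbf{lower bound in Cases $1$ and $2$}, fix large $t$, let $f\colon r(t)\to r'(t)$ be a quasiconformal map realizing $d:=d_{\M(S)}(r(t),r'(t))$, with dilatation $K=e^{2d}$, so that $K^{-1}\Ext_{r(t)}(\alpha)\le\Ext_{r'(t)}(f\alpha)\le K\,\Ext_{r(t)}(\alpha)$. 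Since $\Ext_{r'(t)}(\gamma_j')\asymp e^{-2t}$ we get $\Ext_{r(t)}(f^{-1}\gamma_j')\le Ke^{-2t}$ for each $j$. If some $f^{-1}\gamma_j'$ is isotopic to no $\gamma_i$, the second (resp.\ third) estimate above forces $K\ge ce^{2t}$ (resp.\ $K\ge ce^{4t}$ if it crosses some $\gamma_i$), i.e.\ $d\ge t-O(1)$ (resp.\ $d\ge 2t-O(1)$). Otherwise each $f^{-1}\gamma_j'$ is isotopic to some $\gamma_{i(j)}$, and $f^{-1}$ embeds $\{\gamma_j'\}$ into $\{\gamma_i\}$; this embedding cannot be onto, since a bijection would produce a mapping class matching $\{\gamma_i\}$ with $\{\gamma_j'\}$, contradicting the hypothesis of Cases $1,2$. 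So there is $\gamma_{i_0}\in\{\gamma_i\}\setminus\{\gamma_{i(j)}\}$; the $\gamma_i$ are pairwise non-isotopic, so $f\gamma_{i_0}$ is isotopic to no $\gamma_j'$, while $\Ext_{r(t)}(\gamma_{i_0})\asymp e^{-2t}$ gives $\Ext_{r'(t)}(f\gamma_{i_0})\le Ke^{-2t}$, and the $q'$-version of the estimates yields $K\ge c'e^{2t}$, i.e.\ $d\ge t-O(1)$. Thus $\ell\ge 1$ in both cases. In \textbf{Case $2$}, apply the hypothesis to the mapping class of $f$: there are $i,j$ with $i(\gamma_i,f^{-1}\gamma_j')>0$; this $\gamma_i$ cannot be isotopic to any $f^{-1}\gamma_{j'}'$ (two cores of $q'$ are disjoint), so it is one of the exceptional curves $\gamma_{i_0}$ and $f\gamma_{i_0}$ crosses $\gamma_j'$, whence $\Ext_{r'(t)}(f\gamma_{i_0})\ge c'e^{2t}$ and $d\ge 2t-O(1)$. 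Combined with $\ell\le2$ this gives $\ell=2$ in Case $2$.

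It remains to prove the \textbf{upper bound $\ell\le 1$ in Case $1$}. Choose $\phi_0\in\Mod(S)$ with $i(\gamma_i,\phi_0\gamma_j')=0$ for all $i,j$ and replace $q'$ by $\phi_0q'$, so $\Delta:=\{\gamma_i\}\cup\{\gamma_j'\}$ is a multicurve. For large $t$, $r(t)$ has a thick--thin decomposition: Margulis tubes $T_i$ about $\gamma_i$ of modulus $\asymp e^{2t}M_i(0)$, together with a thick part which converges conformally to the complement of the nodes of the endpoint $r(\infty)$ --- a \emph{fixed} finite-area surface --- on which the curves $\gamma_j'$ appear as fixed curves of bounded length; symmetrically $r'(t)$ has tubes $T_j'$ about $\gamma_j'$ of modulus $\asymp e^{2t}M_j'(0)$ and a fixed thick part carrying the $\gamma_i$. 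Fixing standard modulus-$1$ collars of the non-degenerating curves, disjoint from the tubes, I would build $F\colon r(t)\to r'(t)$ piecewise: a tube $T_i$ (resp.\ $T_j'$-collar on $r(t)$) is mapped to the modulus-$1$ collar of $\gamma_i$ on $r'(t)$ (resp.\ to the tube $T_j'$), contributing dilatation $\asymp e^{2t}$; a curve lying in $\{\gamma_i\}\cap\{\gamma_j'\}$ has tubes of modulus $\asymp e^{2t}$ on both sides, mapped tube-to-tube with dilatation $\asymp M_i(0)/M_j'(0)=O(1)$; and the remaining thick--thick region, of a fixed topological type and bounded geometry on both surfaces, is mapped with $O(1)$ dilatation. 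The boundary identifications, including the bounded amount of Dehn twisting about each curve of $\Delta$ --- which costs only $1+o(1)$ inside a modulus-$\asymp e^{2t}$ annulus --- are matched exactly as in the proof of Theorem~\ref{thm:distance}, via Lemma~\ref{lem:close}. Then $K(F)\le Ce^{2t}$, so $d_{\M(S)}(r(t),r'(t))\le d_{\Teich(S)}(r(t),\phi_0 r'(t))\le t+O(1)$ and $\ell\le1$; with the previous paragraph, $\ell=1$ in Case $1$. \textbf{The main obstacle} is precisely this last construction: arranging the piecewise map $F$ uniformly in $t$ and, above all, making the boundary parametrizations and twist coordinates across the disjoint multicurve $\Delta$ agree, so that $F$ is globally quasiconformal with the claimed dilatation --- this is where Lemma~\ref{lem:close} and the analysis behind Theorem~\ref{thm:distance} do the real work.
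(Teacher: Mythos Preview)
Your argument is correct. For Case $0$, the $\ell\le 2$ bound, and the lower bounds in Cases $1$ and $2$, you and the paper do essentially the same thing: feed core curves into Kerckhoff's formula and invoke the three estimates of Lemma~\ref{lem:short1}. Your phrasing via the optimal quasiconformal map $f$ realizing $d_{\M(S)}(r(t),r'(t))$ makes the role of the $\Mod(S)$--action more explicit than the paper's somewhat terse treatment, but the content is the same.

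The genuine difference is in the \emph{upper bound} $\ell\le 1$ in Case $1$. The paper does not build a comparison map at all: it applies Kerckhoff's formula in the other direction and bounds $\sup_\beta \Ext_{r(t)}(\beta)/\Ext_{r'(t)}(\beta)$ using Minsky's thick--thin extremal length estimates from \cite{Mi1} (the ``product region'' machinery). One checks, curve by curve, that a crossing of a long tube on one side contributes at most $O(e^{2t})$ times the corresponding thick--part contribution on the other side, and that thick--thick contributions are comparable. Your approach instead constructs an explicit $O(e^{2t})$--quasiconformal map $F\colon r(t)\to\phi_0 r'(t)$ by hand, piecing together tube--to--collar, collar--to--tube, tube--to--tube, and thick--to--thick maps, and invoking Lemma~\ref{lem:close} to absorb the (bounded) twist discrepancies. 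Both routes work. The paper's route is cleaner because Minsky's theorem is an off--the--shelf tool and avoids the gluing bookkeeping you flag as the main obstacle; your route is more self--contained within the paper's own toolkit (it reuses the machinery behind Theorem~\ref{thm:distance}) and gives an explicit map rather than an extremal--length ratio bound. The twist parameters across $\Delta$ are indeed bounded independent of $t$ (both thick parts converge to fixed surfaces as $t\to\infty$), so the invocation of Lemma~\ref{lem:close} is legitimate.
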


This discretization of Tits angles lies in contrast to what happens for higher rank locally symmetric spaces $\Gamma\backslash G/K$, where one has 
a continuous values of the Tits angles coming from almost isometrically embedded Weyl chambers. 
 
\begin{proof}
The first case is if the collection of curves $\{\gamma_i\}$ is combinatorially equivalent to the collection of curves $\{\gamma_j'\}$.
That is, there is an element $\phi$ of the mapping class group sending one collection to the other. Then the corresponding geodesics stay bounded distance apart by \cite{Ma1}.  Thus the Tits angle is $0$.

Thus assume the collections are not combinatorially equivalent. Assume further that any collection of curves combinatorially equivalent to $\{\gamma_i\}$ must intersect some $\gamma_j'$. By reindexing we can assume  $$i(\gamma_1,\gamma'_1)>0.$$  Now by Lemma~\ref{lem:short1}
$$e^{2t}\Ext_{r(t)}(\gamma_1)\to c_1,$$ for some
$c_1>0$. Since $\gamma_1$ crosses $C_1'$,
$$\Ext_{r'(t)}(\gamma_1)\geq c_2e^{2t},$$ for some $c_2>0$.
 By Theorem~\ref{theorem:kerckhoff:formula} $$d_{\M(S)}(r(t),r'(t))\geq 1/2\log
(c_1c_2e^{4t})$$ and so $$\liminf_{t\to\infty}\frac
{d_{\M(S)}(r(t),r'(t))}{t}\geq 2.$$ On the other hand by the triangle inequality
$$\limsup_{t\to\infty}\frac {d_{\M(S)}(r(t),r'(t))}{t}\leq 2,$$ and we are done
in this case. 

The remaining case is that there is some $\phi$ so that $i(\phi(\gamma_i),\gamma_j')=0$ for all $i,j$. 
 There are several possibilities  with similar analyses.    Assume for example  that after reindexing and 
 applying an element of $\Mod(S)$ that $\gamma_1\neq \gamma_j'$ for all $j$.  
Now since  $$i(\gamma_1,\gamma_j')=0$$ for all $j'$,   by Lemma \ref{lem:short1}  we have $\text{Ext}_{r'(t)}(\gamma_1)$ bounded below,  and so by Theorem \ref{theorem:kerckhoff:formula} 
$$\liminf_{t\to\infty} \frac {d_{\M(S)}(r(t),r'(t))}{t}\geq 1.$$
We need to show the opposite inequality. 
That is, we need to show  
\begin{equation}
\label{eq:bounded2}
\sup_{\beta} \frac{Ext_{r(t)}(\beta)}{Ext_{r'(t)}(\beta)}\leq c(t)e^{2t},
\end{equation} where 
$$\frac{\log c(t)}{t}\to 0.$$
 
We will use results of Minsky \cite{Mi1} to compare extremal lengths of any $\beta$ along $r(t)$ and
$r'(t)$.   We will say that two functions $f,g$ are {\em comparable}, denoted $f\asymp g$, if 
$f$ and $g$ differ by fixed multiplicative constants (which in our case will depend only on the genus of $S$).  

Fix some $\epsilon>0$, smaller than the Margulis constant for $S$.  
For sufficiently large $t_0$, and for each cylinder $C_i$ along $r(t)$,  find a pair of curves $\gamma_i^1,\gamma_i^2$ with the following properties:
\begin{enumerate}
\item  $\gamma_i^1,\gamma_i^2$ are isotopic to  $\gamma_i$.
\item  Each has fixed  hyperbolic length $\epsilon$. 
\item $\gamma_i^1$ and $\gamma_i^2$  bound a cylinder $\hat C_i\subset C_i$
such that $\frac{\text{mod}(\hat C_i)}{\text{mod}(C_i)}\to 1$ as $t\to\infty$. 
\end{enumerate}
Note that 
$$\text{mod}(C_i)=c_ie^{2t}$$ 
for some fixed $c_i$. 
Let $M_i(t)=\text{mod}(\hat C_i)$.    The curves $\gamma_i^j; j=1,2$ define the thick-thin decomposition of
$r(t)$.  The components $\Omega_j$ of the complement of the cylinders $\hat C_i$ are  thick. 
According to \cite{Mi1}, for any $\beta$ we have 
\begin{equation}
\label{eq:last}
\text{Ext}_{r(t)}(\beta)\asymp \max_{i,j} (\text{Ext}_{\hat C_i}(\beta),\text{Ext}_{\Omega_j}(\beta)),
\end{equation} 
which is the maximum of the contribution to the extremal length of $\beta$ from its intersections with the $\hat C_i$ and  the $\Omega_j$.  These  quantities are given below. 

For the first, 
the hyperbolic  geodesic representative of $\beta$ crosses each $\hat C_i$ a total of $n_i$ times, twisting $t_i$ times. The contribution to extremal length $\text{Ext}_{\hat C_i}(\beta)$ from its intersection with  $\hat C_i$ is given by 
\begin{equation}
\label{eq:cylinder}
\text{Ext}_{\hat C_i}(\beta)=n_i^2(M_i(t)+t_i^2/M_i(t)).
\end{equation}
By \cite{Mi1} 
the contribution to extremal length $\text{Ext}_{\Omega_j}(\beta)$ of $\beta$ from  $\Omega_j$ is comparable to 
$\ell^2(\beta\cap\Omega_j)$, where $\ell(\cdot)$ is length in the hyperbolic metric.   
This quantity can be computed as follows.   
Let $\Gamma_j=\Gamma\cap \Omega_j$, the component  of the critical graph contained in $\Omega_j$.  Choose generators $\omega_1,\ldots,\omega_n$ for $\pi_1(\Gamma_j)$, where $n=n(j)$.   
Since $\Omega_j$ is thick, we have
\begin{equation}
\ell^2(\beta\cap\Omega_j)\asymp(\max_i i(\beta,\omega_i))^2.
\end{equation}
and so 
\begin{equation}\label{eq:thick}
\Ext_{\Omega_j}(\beta)\asymp (\max_i i(\beta,\omega_i))^2
\end{equation}
Similar estimates hold for the extremal length of $\beta$ on $r'(t)$.
Now assume   $\beta$ crosses   $C_1$.  
By assumption,  the core curve  $\gamma_1$ of $C_1$ lies in 
a thick component $\Omega_j'$ of $r'(t)$.  By  (\ref{eq:thick}), the contribution  to the extremal length of $\beta$ in the thick part of $\Omega_j'$ from the $n_i$ crossings of $\beta$ with $\gamma_1$ with $t_i$ twists, is comparable to $n_i^2t_i^2$.  The contribution to extremal length of intersections with curves whose homotopy classes lie in both critical graphs are comparable, by (\ref{eq:thick}). Comparing the estimate $n_i^2t_i^2$ to (\ref{eq:cylinder}) we see that for some $c>0$,
$$\frac{\Ext_{r(t)}(\beta)}{\Ext_{r'(t)}(\beta)}\leq c\frac{n_i^2(M_i(t)+t_i^2/M_i(t)}{n_i^2t_i^2}\leq cM_i(t)\leq cc_ie^{2t}.$$
The same estimates hold if $\beta$ crosses a collection of $\hat C_i'$ while the $\gamma_i'$ lie in  thick components  $\Omega_j$.   Thus we see that (\ref{eq:bounded2}) holds.
\end{proof}

\section{Appendix: Proof of Theorem \ref{theorem:uniqueness3}}
\label{section:appendix}

Before we begin the proof of Theorem \ref{theorem:uniqueness3} we will need some definitions and lemmas.   By a {\em geodesic} in a metric space we will mean a globally length-minimizing geodesic.  Suppose $Y=Y_1\times
\ldots \times Y_m$ is a product of metric spaces, given the sup metric.  A pair of points $p=(p_1,\ldots, p_m)$ and $q=(q_1,\ldots , q_m)$ in $Y$ 
is called a {\em diagonal pair} if  
$d_{Y_i}(p_i,q_i)=d_{Y_j}(p_j,q_j)$ for  $1\leq i,j\leq m$.  If one of the points is understood, we call the other a {\em diagonal point}.

The following lemma follows directly from the definition of the 
$\sup$ metric on $Y$.

\begin{lemma}[Characterizing diagonal pairs]
\label{lemma:ap2}
Let $Y$ be as above, and suppose $m\geq 2$.  If $p,q$ is a diagonal pair, then any 
geodesic between $p$ and $q$ is of the form $(r_1(t),\ldots, r_m(t))$, where 
each $r_i(t)$ is a geodesic segment in $Y_i$, and the $r_i(t)$ have the same parametrizations.    
Thus if there is a unique geodesic from $p_i$ to $q_i$ for each $1\leq i\leq m$, then 
there is a unique geodesic from $p$ to $q$.  If $p,q$ is not a diagonal pair, then there are 
infinitely many geodesics in $Y$ from $p$ to $q$.
\end{lemma}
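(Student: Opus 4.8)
The plan is to read everything off the formula $d_Y(p,q)=\max_i d_{Y_i}(p_i,q_i)$ defining the sup metric, treating the diagonal and non-diagonal cases separately as in the statement.

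For the diagonal case, write $L:=d_Y(p,q)$, so that $d_{Y_i}(p_i,q_i)=L$ for every $i$, and let $\gamma\colon[0,L]\to Y$ be any geodesic from $p$ to $q$ parametrized by arc length, with coordinate maps $\gamma_i$. The first observation is that $d_Y(\gamma(s),\gamma(t))=t-s$ for $0\le s\le t\le L$, hence $d_{Y_i}(\gamma_i(s),\gamma_i(t))\le t-s$ for every $i$. I would then run the triangle inequality in each factor:
\[
L=d_{Y_i}(p_i,q_i)\le d_{Y_i}(\gamma_i(0),\gamma_i(s))+d_{Y_i}(\gamma_i(s),\gamma_i(t))+d_{Y_i}(\gamma_i(t),\gamma_i(L))\le s+(t-s)+(L-t)=L,
\]
which forces equality throughout; in particular $d_{Y_i}(\gamma_i(s),\gamma_i(t))=t-s$. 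Thus each $\gamma_i$ is a unit-speed (hence minimizing) geodesic from $p_i$ to $q_i$ defined on the same interval $[0,L]$, which is the first assertion. The uniqueness consequence is then immediate: if each $Y_i$ carries a unique geodesic $\sigma_i$ from $p_i$ to $q_i$, any $Y$-geodesic $\gamma$ must satisfy $\gamma_i=\sigma_i$ for all $i$, so it equals $(\sigma_1,\dots,\sigma_m)$; and conversely this product path is a geodesic since $d_Y\bigl((\sigma_1(s),\dots,\sigma_m(s)),(\sigma_1(t),\dots,\sigma_m(t))\bigr)=\max_i|t-s|=t-s$.

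For the non-diagonal case I would argue by explicit construction. Set $L=\max_i d_{Y_i}(p_i,q_i)$, let $I$ be the nonempty set of indices attaining the maximum, and pick some $j_0\notin I$, so $\ell:=d_{Y_{j_0}}(p_{j_0},q_{j_0})<L$. Choose unit-speed geodesics $\sigma_i\colon[0,L]\to Y_i$ from $p_i$ to $q_i$ for $i\in I$ (these exist in our setting, and in any case can be extracted from a given $Y$-geodesic by the forced-equality argument above, which applies to the maximal coordinates even without diagonality), and for each $i\notin I$ a fixed geodesic reparametrized $1$-Lipschitzly onto $[0,L]$. The point is that the $j_0$-coordinate has slack, and I would produce a continuum of distinct $1$-Lipschitz paths $c\mapsto\rho^{(c)}\colon[0,L]\to Y_{j_0}$ from $p_{j_0}$ to $q_{j_0}$: if $\ell>0$, delayed reparametrizations $\rho^{(c)}(t)=\rho\bigl(\min(\ell,(t-c)^{+})\bigr)$ for $c\in[0,L-\ell]$ of a geodesic $\rho\colon[0,\ell]\to Y_{j_0}$; and if $\ell=0$, out-and-back detours $p_{j_0}\to\eta(c)\to p_{j_0}$ along a non-constant geodesic germ $\eta$ at $p_{j_0}$, available because $Y_{j_0}$ is connected and not a point. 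Replacing the $j_0$-coordinate by $\rho^{(c)}$ yields paths $\gamma^{(c)}$ with $d_Y(\gamma^{(c)}(s),\gamma^{(c)}(t))=t-s$ for $s\le t$ — the coordinates in $I$ already realize $t-s$ and all other coordinates are $1$-Lipschitz — hence genuine geodesics from $p$ to $q$, pairwise distinct for distinct $c$, giving infinitely (indeed uncountably) many.

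I do not expect a serious obstacle — as the authors note, this comes straight from the definition of the sup metric — but two points need care. In the diagonal case, the phrase ``same parametrizations'' must be obtained from the forced-equality triangle argument, not merely from the Lipschitz bound $d_{Y_i}(\gamma_i(s),\gamma_i(t))\le t-s$, which alone would not even show $\gamma_i$ is minimizing. In the non-diagonal case, one must ensure the construction still produces genuinely distinct paths in the degenerate situation $d_{Y_{j_0}}(p_{j_0},q_{j_0})=0$, which is exactly where the hypothesis that each factor is connected and not reduced to a point gets used.
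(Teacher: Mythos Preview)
Your proposal is correct and is precisely the natural elaboration of the paper's argument. In fact the paper does not give a proof at all: it simply asserts that the lemma ``follows directly from the definition of the sup metric on $Y$,'' so your forced-equality triangle argument in the diagonal case and your slack-coordinate construction in the non-diagonal case spell out exactly what the authors leave implicit.
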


Now suppose that we are in the situation of the hypotheses of Theorem \ref{theorem:uniqueness3}.  By the previous paragraph, Lemma \ref{lemma:ap2}, and the definition of the sets $S_i$, we have the following.

\begin{lemma}
\label{lemma:ap3}
The set of points  
$$(S_1\times Y_2\cdots \times Y_m)\cup (Y_1\times S_2\times\cdots\times Y_m) \cup 
(Y_1\times\cdots\times Y_{m-1}\times S_m)$$
in $Y$ is precisely the set of points
$z=(z_1,\ldots ,z_m)\in Y$ with the following property: 
there exists an integer $N>1$ such that for every neighborhood $U$ of $z$, there exists 
a pair of points $x,y\in U$ such that the number of geodesics in $Y$ 
from $x$ to $y$ is greater than one and at most $N$. In fact we can take $N=N_1\cdot N_2\cdots N_m$.
\end{lemma}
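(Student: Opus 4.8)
The plan is to establish the claimed set equality by proving the two inclusions, with Lemma~\ref{lemma:ap2} doing essentially all the work. Write $A$ for the set on the left-hand side, $A = \bigcup_{i=1}^{m}(Y_1\times\cdots\times S_i\times\cdots\times Y_m)$, and write $B$ for the set of points described on the right-hand side. The key consequence of Lemma~\ref{lemma:ap2} to keep in mind is that for a \emph{diagonal} pair $p,q\in Y$ one has $n_Y(p,q)=\prod_{i=1}^{m}n_i(p_i,q_i)$ --- a tuple consisting of one arclength-parametrized minimizing geodesic $p_i\to q_i$ in each factor assembles to a minimizing geodesic $p\to q$, and every such geodesic arises this way --- whereas for a pair that is not diagonal, $n_Y(p,q)=\infty$. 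Thus producing pairs with finitely many but more than one geodesic near a given point $z$ is exactly a matter of producing diagonal pairs near $z$ and then counting in each factor.

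For $A\subseteq B$: suppose $z=(z_1,\ldots,z_m)$ with $z_j\in S_j$, and let $U\ni z$ be an arbitrary neighborhood, which we shrink to a product $U_1\times\cdots\times U_m$. For each $i\neq j$ fix, once and for all, a positive-length minimizing geodesic segment $\tau_i$ lying inside $U_i$ near $z_i$; such a segment exists because $Y_i$ is connected and is not a point, and when $z_i\in S_i$ because the definition of $S_i$ produces nearby pairs joined by (more than one, hence at least one) minimizing geodesic. Since $z_j$ fails the unique local geodesic property, $U_j$ contains a pair $a_j,b_j$ joined by more than one minimizing geodesic, and by hypothesis~(2) the number $k_j:=n_j(a_j,b_j)$ satisfies $1<k_j\le N_j$; shrinking $U_j$ we may further assume $d:=d_{Y_j}(a_j,b_j)$ is smaller than the length of every $\tau_i$, since this distance tends to $0$ as $U_j$ shrinks to $z_j$. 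Now pick a length-$d$ subsegment of each $\tau_i$, pick one of the $k_j$ minimizing geodesics from $a_j$ to $b_j$ (also of length $d$), parametrize all of them by arclength on $[0,d]$, and assemble them coordinatewise; this yields a diagonal pair $a,b\in U$ with, by Lemma~\ref{lemma:ap2}, $n_Y(a,b)=\prod_i n_i(a_i,b_i)=k_j\cdot\prod_{i\neq j}n_i(a_i,b_i)$, a quantity lying in $(1,\,N_1N_2\cdots N_m]$ because the $j$-th factor contributes $k_j\in(1,N_j]$ and each other factor contributes a value in $[1,N_i]$. Since $U$ was arbitrary, $z\in B$ with the bound $N=N_1N_2\cdots N_m$.

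For $B\subseteq A$ we argue by contraposition. If $z\notin A$ then $z_i\notin S_i$ for every $i$, so each $z_i$ has a neighborhood $W_i$ in which any two points are joined by a \emph{unique} minimizing geodesic. Choose a ball $W_i'\subseteq W_i$ about $z_i$ small enough that any minimizing geodesic between two points of $W_i'$ --- having length at most the diameter of $W_i'$ --- stays inside $W_i$, and set $U=W_1'\times\cdots\times W_m'$. Any pair $x,y\in U$ is either diagonal, in which case $n_Y(x,y)=\prod_i n_i(x_i,y_i)=1$ by uniqueness in each factor, or not diagonal, in which case $n_Y(x,y)=\infty$. Hence for every $N$ the neighborhood $U$ contains no pair with $1<n_Y(x,y)\le N$, so $z\notin B$. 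The case $m=1$ is immediate from the definition of $S_1$ and hypothesis~(2).

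The one point requiring care --- and the step I expect to be the main obstacle --- is the reconciliation of the \emph{local}-geodesic language in which the $S_i$ are defined with the count $n_i$ of \emph{globally} minimizing geodesics: in the first inclusion one must know the pair extracted near $z_j$ is joined by at least two but (by hypothesis~(2)) only finitely many honest global geodesics, and in the second one must prevent a global geodesic between two nearby points from escaping the uniqueness neighborhood $W_i$. Both issues are resolved by working at a sufficiently small scale, which is why in the construction above the neighborhoods are shrunk before any pair of points is chosen.
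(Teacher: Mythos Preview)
Your argument is correct and follows exactly the approach the paper intends: the paper offers no proof of this lemma beyond the sentence ``By the previous paragraph, Lemma~\ref{lemma:ap2}, and the definition of the sets $S_i$, we have the following,'' and your proposal is a faithful fleshing-out of that sketch, using the diagonal/non-diagonal dichotomy of Lemma~\ref{lemma:ap2} to count geodesics in the product. One small remark: your final paragraph worries about reconciling ``local'' versus ``global'' geodesics, but the paper's discussion just before Theorem~\ref{theorem:uniqueness3} makes clear that $S_i$ is characterized via the count $n_i(x,y)$ of \emph{globally} minimizing geodesics, so the $W_i'$ shrinking step in your contrapositive direction is unnecessary (though harmless).
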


Note that the complement of the  set given in Lemma \ref{lemma:ap3} is just $(Y_1\setminus S_1)\times \cdots \times (Y_m\setminus S_m)$.  The characterization of the points in the set given by Lemma \ref{lemma:ap3} is purely metric, and is therefore clearly preserved by any isometry of $Y$ and therefore so is its complement. 
It follows that any isometry of $Y$ preserves this set.  But the metric space $(Y_1\setminus S_1)\times \cdots \times (Y_m\setminus S_m)$ is a product of geodesic metric spaces, none of which is a point, and each of which has the locally unique geodesics property.  Malone \cite{Mal} proved that any such product decomposition (in the $\sup$ metric) is unique.  As each $(Y_i\setminus S_i)$ is open and dense in $Y_i$, any isometry of $Y_i\setminus S_i$ has a unique extension to $Y_i$. Theorem \ref{theorem:uniqueness3} follows.

\noindent
Benson Farb:\\
Dept. of Mathematics, University of Chicago\\
5734 University Ave.\\
Chicago, Il 60637\\
E-mail: farb@math.uchicago.edu
\medskip

\noindent
Howard Masur:\\
Dept. of Mathematics, University of  Chicago\\
5734 University Ave\\
Chicago, IL 60637\\
E-mail: masur@math.uchicago.edu

\end{document}